\def \-{\bar}
\newtheorem{theorem}{Theorem}[section]
\newtheorem{lemma}[theorem]{Lemma}
\newtheorem{corollary}[theorem]{Corollary}
\newtheorem{proposition}[theorem]{Proposition}
\newtheorem{definition}[theorem]{Definition}
\newtheorem{example}[theorem]{Example}
\newtheorem{remark}[theorem]{Remark}
\newtheorem{conjecture}[theorem]{Conjecture}
\date{}
\begin{document}

\title{\bf Holomorphic maps from the complex unit ball to Type IV classical domains}

\author{Ming Xiao and Yuan Yuan}

\vspace{3cm} \maketitle

\begin{abstract}
We prove rigidity results for holomorphic proper maps from the complex unit ball $\mathbb{B}^n$ to the Type IV bounded symmetric domain $D^{IV}_m$ where $n \geq 4, n+1\leq m \leq 2n-3$. In addition, a classification result is established when $m=n+1.$
\end{abstract}

\bigskip

\section{Introduction}

The first part of this paper is devoted to establish new rigidity results for proper holomorphic maps from the complex unit ball to higher rank bounded symmetric domains.   The rigidity properties have been extensively studied in the past decades for proper holomorphic maps $F: \Omega_1 \rightarrow \Omega_2$, between  bounded symmetric domains $\Omega_1, \Omega_2$. The pioneer works are due to Poincar\'e \cite{P} and later to Alexander \cite{Al} when $\Omega_{1}, \Omega_{2}$ are complex unit balls. In particular,  any proper holomorphic self-map of the unit ball in $\mathbb{C}^n$ is an automorphism if $n \geq 2$ \cite{Al}. It is well-known that the rigidity properties fail dramatically for proper holomorphic maps between balls of different dimensions.
For this type of results, see \cite{HS}, \cite{Lo}, \cite{Fo1},\cite{Gl}, \cite{St}, \cite{Do}, \cite{D1} and etc. However, the rigidity properties can still be expected if certain boundary regularity of the map is assumed. See \cite{W}, \cite{Fa}, \cite{CS}, \cite{Hu1}, \cite{Hu2}, \cite{HJ1}, \cite{HJY}, \cite{Eb} and etc. The lists above are by no means to be complete.

On the other hand, it is a widely open problem to understand proper holomorphic maps $F: \Omega_{1} \rightarrow \Omega_{2}$ between bounded
symmetric domains $\Omega_{1}, \Omega_{2}$ of higher rank. The problem was first studied for the case $\Omega_1=\Omega_2$.  Any proper holomorphic self-mapping on an irreducible bounded symmetric domain of rank $\geq 2$ is an automorphism (cf. \cite{HN}, \cite{TH}).
When rank$(\Omega_1)\geq$ rank$(\Omega_2) \geq 2$ and $\Omega_{1}$ is irreducible,  it was proved by Tsai \cite{Ts} that $F$ must be a totally geodesic isometric embedding with respect to Bergman metrics. Tu \cite{Tu1} proved that the $F$ is a biholomorphism when
$\Omega_{1}$ and $\Omega_{2}$ are equal dimensional and $\Omega_{1}$ is irreducible and $\mathrm{rank}(\Omega_{1}) \geq 2.$
When rank$(\Omega_2) >$ rank$(\Omega_1)$, the studies are mainly focused on the Type I classical domains and many interesting results have been established (cf. \cite{Tu2}, \cite{Ng4}, \cite{KZ1}, \cite{KZ2} et al). Note that the total geodesy of $F$ fails in general although it is believed that $F$ should take certain special forms module automorphisms(cf. \cite{M08}, \cite{Ng4},\cite{KZ2}). In this paper, we prove new rigidity results for proper holomorphic maps from the  unit ball  in $\mathbb{C}^n$ to the  $m$-dimensional Type IV classical domain.
The isometry property still survives although the total geodesy fails in our setting. In particular, we establish a 
classification result for CR maps between their boundaries when $m=n+1.$


\medskip

 Write $\mathbb{B}^n$ for the  unit ball in $\mathbb{C}^n,$ and $D_{m}^{IV}$ for the classical type IV domain in $\mathbb{C}^m$ and equip them with the Bergman metrics $\omega_{\mathbb{B}^n}, \omega_{D^{IV}_m}$, respectively (See section 2 for explicit formulas). We say a holomorphic map $F: \mathbb{B}^n \rightarrow D^{IV}_m$ is an isometric embedding or simply an isometry if $F^{*}(\omega_{D^{IV}_m})=\lambda \omega_{\mathbb{B}^n}$ for some constant $\lambda >0.$ Write $\partial \mathbb{B}^n$ and $\partial D^{IV}_m$ for the unit sphere in $\mathbb{C}^n$ and the boundary of $D^{IV}_m,$ respectively.

\begin{theorem}\label{proper1}
Assume $n \geq 4, n+1 \leq m \leq 2n-3.$

\begin{enumerate}

\item (Local version) Let $F$ be a holomorphic map from a connected open set  $U$ in $\mathbb{C}^n$ containing $p \in \partial \mathbb{B}^n$ to $\mathbb{C}^m$. 
Assume that $F(\partial \mathbb{B}^n \cap U) \subset \partial D^{IV}_{m}$ and $F(U) \not\subset \partial D^{IV}_m.$  Then $F$ extends to a holomorphic isometric embedding from $\mathbb{B}^n$ into $D^{IV}_m$ with $F^{*}(\omega_{D^{IV}_m})=\frac{m}{n+1}\omega_{\mathbb{B}^n}.$

\item (Global version) Any algebraic proper holomorphic map $F$ from $\mathbb{B}^n$ to $D^{IV}_m$ is an isometric embedding with $F^{*}(\omega_{D^{IV}_m})=\frac{m}{n+1}\omega_{\mathbb{B}^n}.$
\end{enumerate}
\end{theorem}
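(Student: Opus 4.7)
The plan is to prove part (1) first; part (2) will follow by applying (1) near any smooth boundary point at which the algebraic proper map is regular, and then propagating the resulting isometric identity $F^{*}\omega_{D^{IV}_m}=\frac{m}{n+1}\omega_{\mathbb{B}^n}$ across $\mathbb{B}^n$ by real-analytic continuation. For (1), after shrinking $U$ the hypothesis $F(U)\not\subset\partial D^{IV}_m$ lets me assume $F(U\cap\mathbb{B}^n)\subset D^{IV}_m$, and I may normalize $p$ and $F(p)$ by composing with automorphisms of the source and target. The strategy is threefold: upgrade $F$ to a real-algebraic object via the reflection principle, extract from the defining equation of $\partial D^{IV}_m$ a decoupled system of Hermitian and bilinear constraints, and finally apply CR rigidity to identify $F$ with the standard linear isometric embedding, up to automorphisms.

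Using the real-algebraic defining function $\rho(w)=1-2\|w\|^{2}+|w\cdot w|^{2}$ of $\partial D^{IV}_m$ (with $w\cdot w:=\sum w_{j}^{2}$) and the Segre reflection principle in the tradition of Webster and Huang--Ji, $F$ extends real-algebraically and satisfies a polarized identity of the form
\begin{equation*}
1-2\langle F(z),F(\xi)\rangle+\bigl(F(z)\cdot F(z)\bigr)\,\overline{\bigl(F(\xi)\cdot F(\xi)\bigr)}=\Phi(z,\bar\xi)\bigl(1-\langle z,\xi\rangle\bigr)
\end{equation*}
near $(p,\bar p)$, with $\Phi$ real-analytic and non-vanishing at $(p,\bar p)$. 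Separating the bihomogeneous components of this identity produces independent constraints on the Hermitian pairing $\langle F(z),F(\xi)\rangle$ and on the bilinear form $F(z)\cdot F(\xi)$. Read at a smooth strictly pseudoconvex boundary point of $\partial D^{IV}_m$, the Hermitian constraint is exactly that of a CR embedding of a piece of $\partial\mathbb{B}^n$ into a hyperquadric of signature $(m-1,1)$ in $\mathbb{C}^{m}$ with CR codimension $m-n-1\le n-4$. This is within the range of Huang-type rigidity (\cite{Hu1}, \cite{Hu2}, \cite{HJ1}), which forces $F$ to be, modulo source and target automorphisms, the standard linear embedding $\mathbb{B}^n\hookrightarrow D^{IV}_m$. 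A direct computation at the normalized origin then yields the constant $\frac{m}{n+1}$, which is the ratio of the Bergman coefficients of $D^{IV}_m$ and $\mathbb{B}^n$ at the origin.

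The main obstacle I expect is the Levi degeneracy of $\partial D^{IV}_m$: the quadratic term $|w\cdot w|^{2}$ in $\rho$ produces a one-dimensional Levi kernel along the smooth part of $\partial D^{IV}_m$, so one cannot directly view the target as a strictly pseudoconvex hypersurface and invoke Huang's lemma verbatim. To circumvent this I would either arrange that $F(p)$ lies in the stratum of $\partial D^{IV}_m$ where the Levi form, restricted to the image of the holomorphic tangent space of $\partial\mathbb{B}^n$ under $dF$, is positive definite (so the rigidity applies transversally to the kernel), or decouple the two independent real conditions cutting out the relevant stratum of $\partial D^{IV}_m$ and reduce to a strictly pseudoconvex quotient hypersurface of one lower dimension. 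Once linearity is established locally, part (2) follows immediately by algebraic analytic continuation of the isometric identity across all of $\mathbb{B}^n$.
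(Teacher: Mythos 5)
Your overall framing — reduce to a CR rigidity problem for maps into a signature--$1$ hyperquadric and invoke BEH/Huang rigidity when the codimension is small — is the right skeleton, but there are two serious errors, one in the target conclusion and one in the reduction step.

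First, your claimed conclusion that the rigidity lemma ``forces $F$ to be, modulo source and target automorphisms, the standard linear embedding $\mathbb{B}^n\hookrightarrow D^{IV}_m$'' is false. There is no linear proper map from $\mathbb{B}^n$ into $D^{IV}_m$ in the range $m\le 2n-3$: a linear totally geodesic embedding of $\mathbb{B}^n$ into a Type~IV domain requires $m\ge 2n$ (one needs two target coordinates per source coordinate to cancel the bilinear term $ZZ^t$). The theorem only asserts that $F$ is an \emph{isometry}, and the paper exhibits genuinely nonlinear isometries, e.g.\ the rational map $R^{IV}_n$ and the irrational map $I_{n,0}$, both into $D^{IV}_{n+1}$. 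What the paper's rigidity result (Theorem~\ref{propergene1}) actually produces, after all normalizations, is a map equivalent to $(\psi,\psi,z_1,\ldots,z_n,\mathbf{0})$ with $\psi$ an \emph{arbitrary} holomorphic function; this is an isometry into $\mathbb{B}^N_1$ because the two copies of $\psi$ sit in a positive and a negative coordinate and cancel in the indefinite metric. The isometric constant $\tfrac{m}{n+1}$ then comes from compounding the isometric factors, not from computing at the origin of a linear map.

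Second, your proposed resolution of the Levi-degeneracy problem goes the wrong way. You suggest ``decoupling the two independent real conditions \ldots and reducing to a strictly pseudoconvex quotient hypersurface of one lower dimension.'' The paper instead \emph{raises} the dimension: it introduces the holomorphic isometric embedding $L:D^{IV}_m\to\mathbb{B}^{m+1}_1$, $L(Z)=[1,\tfrac{1}{2}ZZ^t,Z]$, which records the bilinear form $ZZ^t$ as an extra coordinate of negative signature. This absorbs the quartic term $|ZZ^t|^2$ into a purely Hermitian (signature--$1$) quadric $\partial\mathbb{B}^{m+1}_1\subset\mathbb{C}^{m+1}$, eliminating the Levi kernel entirely. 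One then studies the composite $L\circ F:\partial\mathbb{B}^n\to\partial\mathbb{B}^{m+1}_1$, and the codimension constraint is $m+1\le 2n-2$, i.e.\ $m\le 2n-3$, exactly the stated hypothesis. This change of viewpoint — embedding $D^{IV}_m$ as a holomorphic isometric submanifold of the indefinite ball — is the essential missing idea in your plan; without it you would be trying to run Huang-type arguments directly on a Levi-degenerate hypersurface, for which the relevant normal-form machinery does not apply. Finally, the global extension in part~(2) is obtained by Mok's algebraic extension theorem for local holomorphic isometries between bounded symmetric domains, not merely by real-analytic continuation of the local isometric identity.
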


\bigskip
Recall that a holomorphic function $f$ over $U \subset \mathbb{C}^n$ is called (holomorphic) Nash-algebraic, or simply algebraic if there is an irreducible holomorphic polynomial $P(z,X)$ in $X$ with coefficients polynomials of $z$ such that $P(z, f(z)) \equiv 0$ over $U.$ A holomorphic map $F$ is called algebraic if each of its components is algebraic.
In the case $m=n+1,$  the proper holomorphic maps are classified under a weaker boundary regularity condition.
\begin{proposition}\label{proper2}
Let $n \geq 4$ and $F$ be a $C^2$-smooth CR transversal CR map from an open piece of $\partial \mathbb{B}^n$ to an open smooth piece of $\partial D^{IV}_{n+1}.$ Then $F$ extends to a holomorphic isometry from $\mathbb{B}^n$ to $D^{IV}_{n+1}.$ Furthermore, $F$ is  equivalent to either
\begin{equation}\label{eqncl1}
\left(z_{1},\cdots,z_{n-1}, \frac{\frac{1}{2}\sum_{i=1}^{n-1}z_{i}^2-z_{n}^2+z_{n}}{\sqrt{2}(1-z_{n})},
\sqrt{-1}\frac{\frac{1}{2}\sum_{i=1}^{n-1}z_{i}^2+z_{n}^2-z_{n}}{\sqrt{2}(1-z_{n})} \right);
\end{equation}
or
\begin{equation}\label{eqncl2}
\left(z_1, \cdots, z_{n-1}, z_n, 1-\sqrt{1-\sum_{j=1}^n z_j^2}\right).
\end{equation}
\end{proposition}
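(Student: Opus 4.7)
The plan is to first reduce Proposition \ref{proper2} to the global isometric embedding statement established in Theorem \ref{proper1}, and then to classify such embeddings by exploiting the Bergman-kernel functional equation together with automorphism normalization.

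\emph{Extension step.} Both $\partial \mathbb{B}^n$ and the smooth part of $\partial D^{IV}_{n+1}$ (the locus $N(w, \bar w) := 1 - 2|w|^2 + |w \cdot w|^2 = 0$ with $|w|^2 < 1$) are real-algebraic, so a $C^2$-smooth CR transversal CR map between them extends holomorphically and Nash-algebraically across the boundary by a standard reflection-principle argument. Transversality also precludes the image of a small neighborhood from being contained in $\partial D^{IV}_{n+1}$. Since $n \geq 4$ gives $m = n+1 \leq 2n - 3$, Theorem \ref{proper1}(1) applies and yields that $F$ extends to a holomorphic isometric embedding $F : \mathbb{B}^n \hookrightarrow D^{IV}_{n+1}$ with $F^{*} \omega_{D^{IV}_{n+1}} = \omega_{\mathbb{B}^n}$.

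\emph{From isometry to a polynomial identity.} The K\"ahler potentials of $\omega_{\mathbb{B}^n}$ and $\omega_{D^{IV}_{n+1}}$ are, up to constants, $-(n+1)\log(1 - |z|^2)$ and $-(n+1)\log N(w, \bar w)$, so the isometry condition integrates and polarizes to
\begin{equation*}
N(F(z), \overline{F(w)}) \;=\; \phi(z)\,\overline{\phi(w)}\,(1 - \langle z, w\rangle)
\end{equation*}
for some nowhere-vanishing holomorphic $\phi$ on $\mathbb{B}^n$. After composing with automorphisms of $D^{IV}_{n+1}$ to achieve $F(0) = 0$, setting $w = 0$ shows that $\phi$ is a unimodular constant, reducing the identity to
\begin{equation*}
2\,\langle F(z), F(w)\rangle - Q(z)\,\overline{Q(w)} \;=\; \langle z, w\rangle, \qquad Q(z) := F(z) \cdot F(z).
\end{equation*}

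\emph{Classification and the main obstacle.} Writing $F(z) = Az + O(|z|^2)$ with $A$ an $(n+1)\times n$ matrix, comparison of bidegree-$(1,1)$ terms yields $A^{*} A = \tfrac12 I_n$. The isotropy group of $D^{IV}_{n+1}$ at $0$ contains $\mathrm{O}(n+1, \mathbb{R}) \times U(1)$, and combined with $U(n) \subset \mathrm{Aut}(\mathbb{B}^n)_0$ this allows one to bring $A$ into a convenient real normal form. I would then compare higher-bidegree terms inductively: bidegree $(d,1)$ for $d \geq 2$ forces the higher Taylor coefficients of $F$ to be $\langle \cdot, \cdot \rangle$-orthogonal to the columns of $A$, and bidegrees $(d,d')$ with $d, d' \geq 2$ couple them to those of $Q$. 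With only one extra complex dimension in the target, the coupled system is extremely rigid, and I expect a dichotomy according to the algebraic nature of $Q$: either $Q$ is a rational function of $z$, producing the Cayley-type map \eqref{eqncl1}, or $Q$ satisfies a quadratic equation over $\mathbb{C}[z]$, producing the square-root map \eqref{eqncl2}. The main obstacle is carrying out this last step rigorously: verifying that the surviving moduli after automorphism normalization collapse to exactly these two discrete orbits requires a careful induction on Taylor order and close bookkeeping of the interaction between $Q$ and the part of $F$ transverse to the range of $A$.
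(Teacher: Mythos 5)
Your two-stage outline (extend, then classify) matches the paper's, but both stages have genuine gaps as written.

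\textbf{Extension step.} The boundary $\partial D^{IV}_{n+1}$ is Levi-degenerate at every smooth point (the Levi form has a one-dimensional kernel, reflecting that $D^{IV}_{n+1}$ has rank $2$). A ``standard reflection-principle argument'' for $C^2$-smooth CR maps requires Levi-nondegeneracy of the target, and is not available here; the algebraicity of the hypersurfaces alone does not rescue the classical argument at this regularity. The paper instead observes that $\partial D^{IV}_{n+1}$ is $2$-nondegenerate in the sense of Baouendi--Huang--Rothschild and invokes a dedicated regularity theorem of Kossovskiy--Lamel--Xiao for CR maps into Levi-degenerate hypersurfaces to upgrade $C^2$ to real analyticity, after which holomorphic extension and Theorem~\ref{proper1}(1) yield the isometry conclusion exactly as you say. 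So your conclusion is right, but you need to name and use a reflection/regularity result that actually applies to $2$-nondegenerate targets; without it the first step does not go through.

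\textbf{Classification step.} You correctly reduce to classifying local holomorphic isometries $\mathbb{B}^n \to D^{IV}_{n+1}$ and set up the right functional equation $2\langle F(z), \overline{F(w)}\rangle - Q(z)\overline{Q(w)} = \langle z, \bar w\rangle$ after normalizing $F(0)=0$. But you explicitly stop short of the classification itself, calling it ``the main obstacle.'' This is precisely what the paper proves as a standalone result (Theorem~\ref{T1}), and the argument there is not a Taylor-coefficient induction of the kind you sketch. It proceeds via D'Angelo's lemma to produce a unitary matrix $\mathbf U$ with $\left(z, \tfrac12\sum f_j^2\right) = (f_1,\ldots,f_{n+1})\mathbf U$, a careful two-stage normalization of $\mathbf U$ using the singular-value decomposition of $\mathbf U_0^t\mathbf U_0$ and the structure of the isotropy group $O(n+1,\mathbb R)\times SO(2,\mathbb R)$, reducing to a one-parameter family $I_{n,\theta}$, and then a separate projective/Borel-embedding computation (Theorem~\ref{irine}) showing all $I_{n,\theta}$ with $\theta\in[0,\pi/4)$ are equivalent to $I_{n,0}$, leaving exactly the two orbits \eqref{eqncl1} and \eqref{eqncl2}. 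The dichotomy is not ``rational $Q$ versus algebraic $Q$'' detected along the way; it falls out of whether $\theta=\pi/4$ or $\theta\neq\pi/4$ after normalization. Until you carry out (or cite) this normalization argument, the classification half of the proposition remains unproved.
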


Here two proper holomorphic maps $F_1, F_2: \Omega_1 \rightarrow \Omega_2$ are called equivalent if there exist automorphisms $\phi$ and $\psi$ of $\Omega_1$ and $\Omega_2$ respectively, such that $\psi \circ F_1 \circ \phi= F_2.$ The definition of CR transversality will be given in Section 2. 
We next list some important remarks of Theorem \ref{proper1} and Proposition \ref{proper2}.

\begin{remark}
\begin{enumerate}

\item It follows from Mok's result (\cite{M4}) that the algebraicity assumption on $F$ is necessary in Theorem \ref{proper1}. Moreover, a non-algebraic proper map from
$\mathbb{B}^n$ to $D^{IV}_{n+2}$ that is not an isometry will be constructed in
Example \ref{e025}.

\item The statement of Proposition \ref{proper2} fails if the transversality assumption is dropped. Similarly, the statement of Theorem \ref{proper1} fails if the condition $F(U) \not \subset \partial D^{IV}_m$ is dropped. See Example \ref{e0}.

\item The conclusion of Theorem \ref{proper1} fails if $m \geq 2n$(cf. Example \ref{e01}). The conclusion of Proposition \ref{proper2} fails if $n=1$ (cf. Example \ref{e10}). We suspect that Theorem \ref{proper1} holds for  all $n \geq 2, n+1 \leq m \leq 2n-1$ and Proposition \ref{proper2} holds for all $n \geq 2.$  See more details in Section 2.

\item We state the following fact as a remark of Proposition \ref{proper2}. Let $G$ be a nonconstant local $C^2$-smooth CR map from the unit sphere in $\mathbb{C}^n$ to $\partial D^{IV}_{n+1}.$ Then the image of $G$ cannot be contained in the singular set of $\partial D^{IV}_{n+1}.$ This is due to the fact that the singular set of $\partial D^{IV}_{n+1}$ is a real $(n+1)-$dimensional subvariety of $\{Z \in \mathbb{C}^{n+1}: Z\overline{Z}^t=2\}.$

\end{enumerate}

\end{remark}

\medskip

The study of proper holomorphic maps from $\mathbb{B}^n$ to $D^{IV}_m$ or CR maps from $\partial\mathbb{B}^n$ to $\partial D^{IV}_m$ is closely related to  CR maps between hyperquadrics. Rigidity properties are explored for CR maps into hyperquadrics in
\cite{BH}, \cite{BEH1, BEH2}, \cite{EHZ1, EHZ2}, \cite{ES}, \cite{Ng3}, etc (See also \cite{D1}, \cite{DL} for irrigidity phonemena in this setting.) The crucial idea to establish Theorem \ref{proper1} is to recognize $D^{IV}_m$ as
an isometric submanifold of the generalized complex unit ball so that we can apply techniques from CR geometry. As an important step to approach Theorem \ref{proper1}, we establish a rigidity result first for holomorphic proper maps from the unit ball to the generalized balls. This part is motivated by the framework of Baouendi-Ebenfelt-Huang (\cite{BEH2}) and is proved by a very similar argument.

\medskip

In another direction, since the work of Bochner \cite{B} and Calabi \cite{C}, lots of efforts have been made to understand the local holomorphic isometry $F: U \rightarrow \Omega_2$ with respect to the (normalized) Bergman metrics of $\Omega_1$ and $\Omega_2$ respectively, i.e. $F^*\omega_{\Omega_2} = \lambda \omega_{\Omega_1}$ on $U$, where $U \subset \Omega_1$ is a connected open set. This problem is largely motivated by the algebraic dynamics problem considered by Clozel-Ullmo \cite{CU} (cf. \cite{M4} \cite{MN} \cite{DiL} \cite{Yu} \cite{FHX} et al for further developments).
 Mok (\cite{M4}) proved  that $F$ extends to an algebraic proper holomorphic isometry from $\Omega_1$ to $\Omega_2$ . Assume dim$_{\mathbb{C}}(\Omega_1) \geq 2$ and $\Omega_1$ is irreducible. Mok proved that $F$ is totally geodesic if rank$(\Omega_1) \geq 2$ \cite{M4}. When rank$(\Omega_1)=1$, 
and $\Omega_2$ is the product of complex unit balls, $F$ is also totally geodesic by works of Mok \cite{M2}, Ng \cite{Ng2}, the second author and Zhang \cite{YZ}. However, when $\Omega_1=\mathbb{B}^n$ and $\Omega_2$ is a bounded symmetric domain other than the product of unit balls, the total geodesy fails dramatically \cite{M5}. In fact, assuming that $\Omega_2$ is irreducible and rank$(\Omega_2) \geq 2$, Mok constructed a non-totally geodesic holomorphic isometry from $\mathbb{B}^n$ into $\Omega$ by using the theory of variety of minimal rational tangents \cite{M5}. In the next theorem, we classify the local holomorphic isometries from $\mathbb{B}^n$ into $D^{IV}_{n+1}$. We refer to \cite{XY} for the study in the general case and note that this problem is studied independently by Chan-Mok in \cite{CM}.

\begin{theorem}\label{T1}
Let $n \geq 2$ and $F$ be any holomorphic isometry from an open set $U \subset \mathbb{B}^n$ into $D^{IV}_{n+1}$ satisfying
\begin{equation}\notag
F^*\omega_{D^{IV}_{n+1}} = \lambda \omega_{\mathbb{B}^n}~\text{on}~U \end{equation}
for some $\lambda >0.$  Then $F$ is equivalent to either  the map in (\ref{eqncl1}) or the map in (\ref{eqncl2}).
\end{theorem}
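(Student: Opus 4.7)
The plan is to reduce to Proposition \ref{proper2} after extending the local isometry globally. By Mok's algebraic extension theorem \cite{M4}, the local isometry $F$ extends to an algebraic proper holomorphic map, still denoted $F$, from $\mathbb{B}^n$ into $D^{IV}_{n+1}$, satisfying $F^{*}\omega_{D^{IV}_{n+1}} = \lambda\,\omega_{\mathbb{B}^n}$ on all of $\mathbb{B}^n$. Algebraicity forces $F$ to extend holomorphically across a Zariski-open subset $\Sigma \subset \partial\mathbb{B}^n$, with $F(\Sigma) \subset \partial D^{IV}_{n+1}$.

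For $n \geq 4$, since $n + 1 \leq 2n - 3$, Theorem \ref{proper1}(2) pins down the constant to $\lambda = 1$. I would choose a point $p \in \Sigma$ at which $F(p)$ lies in the smooth part of $\partial D^{IV}_{n+1}$; such $p$ exists because the singular set of $\partial D^{IV}_{n+1}$ is a lower-dimensional real-algebraic subvariety (as emphasized in the fourth item of the remark following Proposition \ref{proper2}), and $F(\Sigma)$ cannot be contained in it by properness. CR transversality at $p$ holds for proper holomorphic maps with algebraic boundary extension---otherwise the image would lie in a proper CR submanifold of $\partial D^{IV}_{n+1}$, violating properness. Proposition \ref{proper2} then identifies $F$, up to automorphism equivalence, with either \eqref{eqncl1} or \eqref{eqncl2}.

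The remaining cases $n = 2, 3$ require a direct approach. The strategy here is to polarize the isometric equation $F^{*}\omega_{D^{IV}_{n+1}} = \lambda\,\omega_{\mathbb{B}^n}$ into a functional identity relating the Bergman kernels of $\mathbb{B}^n$ and $D^{IV}_{n+1}$ evaluated at $(z, w)$ and $(F(z), F(w))$. Since both kernels are explicit rational functions, this produces an algebraic identity on $\mathbb{B}^n \times \mathbb{B}^n$ satisfied by the components of $F$; expansion and coefficient-matching, combined with normalization via automorphisms of the source and target (using the freedom to move any prescribed boundary point to a standard one), reduces the moduli of solutions to the two families \eqref{eqncl1} and \eqref{eqncl2}. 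The main obstacle is exactly these low-dimensional cases, where the CR-geometric rigidity powering Proposition \ref{proper2} is unavailable and the classification must be carried out by a more hands-on polynomial analysis of the polarized identity; a secondary technical point is verifying CR transversality and smoothness of the image at a suitable boundary point when $n \geq 4$.
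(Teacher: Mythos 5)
Your proposal has a critical logical flaw: it is circular. You propose to prove Theorem~\ref{T1} for $n \geq 4$ by invoking Proposition~\ref{proper2}, but in the paper the proof of Proposition~\ref{proper2} explicitly says ``Then Proposition~\ref{proper2} follows from Theorem~\ref{T1}.'' That is, the paper deduces the classification in Proposition~\ref{proper2} \emph{from} Theorem~\ref{T1}; Theorem~\ref{T1} is the primitive ingredient, not the consequence. You cannot use Proposition~\ref{proper2} to close the loop back to Theorem~\ref{T1} without first supplying an independent classification result, which is exactly what Theorem~\ref{T1} is.

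The paper's actual argument for Theorem~\ref{T1} (Section 4.3) is a self-contained normalization scheme that works uniformly for all $n \geq 2$, and it never touches Proposition~\ref{proper2} or Theorem~\ref{proper1}. After reducing to $\lambda = 1$ by Proposition~\ref{con}, one applies D'Angelo's lemma to the polarized identity $\sum_i |f_i|^2 - \frac14 |\sum_i f_i^2|^2 = |z|^2$, producing a unitary matrix $\bf U$ with $\bigl(z, \tfrac12\sum f_j^2\bigr) = F \cdot {\bf U}$. Two rounds of normalization --- first using a singular value decomposition of the symmetric matrix ${\bf U_0}^t {\bf U_0}$, then exploiting the orthogonality constraints on the real and imaginary parts of the column vectors --- reduce $\bf U$ to an explicit two-parameter form depending on a single angle $\theta \in [0, \pi/2]$. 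Solving the resulting linear system for $F$ yields exactly $R^{IV}_n$ when $\theta = \pi/4$ and $I_{n,\theta}$ otherwise, and Theorem~\ref{irine} shows all $I_{n,\theta}$ are mutually equivalent. This is a genuinely different and more elementary route than what you sketch: no algebraic extension, no boundary CR analysis, no transversality --- just normal-form linear algebra on the D'Angelo unitary. Your fallback ``polarize and match coefficients'' for $n = 2, 3$ is in fact the right instinct and is close in spirit to what the paper does, but you leave it as a strategy rather than carrying it out, and it is needed for \emph{every} $n$ since the $n \geq 4$ branch is circular as stated.

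Two secondary points are also under-justified in the $n\geq 4$ branch: (i) asserting that CR transversality holds at a generic point of $\Sigma$ by properness alone is not immediate --- the paper handles this via Theorem~1.1 in \cite{BER2} inside the proof of Theorem~\ref{hyper}, and that machinery is not free; (ii) the claim that properness forces $F(\Sigma)$ out of the singular set of $\partial D^{IV}_{n+1}$ needs the dimension count from the remark, which you cite but do not verify applies here.
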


The novelty in Theorem \ref{proper1} is that, as long as the codimension is small, the isometry is implied by properness, which is the converse statement of Mok's theorem (\cite{M4}). This can be applied to obtain the following corollary.

\begin{corollary}\label{coro14}
Assume $n \geq 4, K \geq 1, N_1, \cdots, N_K \leq 2n-3$. Let $F=(F_1, \cdots, F_K)$ be a holomorphic map from a connected open set
$U \subset \mathbb{B}^n$ into the product of Type IV domains $\Omega = D^{IV}_{N_1} \times \cdots \times D^{IV}_{N_K}$ satisfying the following isometric equation:
\begin{equation}\label{eeq}
 \lambda \omega_{\mathbb{B}^n} = \sum_{l=1}^K F^*_l \omega_{D^{IV}_{N_l}} = F^*\omega_\Omega ~\text{on}~U
\end{equation}
 for some positive constant $\lambda$. Then each $F_{l}, 1 \leq l \leq K$, is either a constant map or  extends to a holomorphic isometric embedding from $\mathbb{B}^n$ to $D^{IV}_{N_l}$ with $F^{*}(\omega_{D^{IV}_{N_l}})=\frac{N_l}{n+1} \omega_{\mathbb{B}^n}.$
\end{corollary}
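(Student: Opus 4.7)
My approach is to reduce the sum-of-pullbacks equation (\ref{eeq}) to pointwise isometry statements for each non-constant component $F_l$, by applying Theorem \ref{proper1}. First, since (\ref{eeq}) identifies $F=(F_1,\ldots,F_K):U \to \Omega$ as a local Bergman isometry (up to the scalar $\lambda$), Mok's algebraic extension theorem \cite{M4} extends $F$ to an algebraic proper holomorphic map $F:\mathbb{B}^n \to \Omega$ satisfying (\ref{eeq}) throughout $\mathbb{B}^n$. In particular each component $F_l:\mathbb{B}^n \to D^{IV}_{N_l}$ is algebraic.

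Next I partition the non-constant components into
\[
S = \{l : F_l(\partial \mathbb{B}^n \cap V) \subset \partial D^{IV}_{N_l}\ \text{for some open set $V \subset \mathbb{C}^n$ meeting $\partial \mathbb{B}^n$}\}
\]
and its complement $T$. For $l \in S$, the local version of Theorem \ref{proper1} applies to $F_l$: the map is algebraic, sends a boundary open piece into $\partial D^{IV}_{N_l}$, and the non-degeneracy condition $F_l(V) \not\subset \partial D^{IV}_{N_l}$ holds because a non-constant holomorphic map on an open set cannot factor through a real-analytic proper subvariety. Consequently $F_l$ extends to a holomorphic isometric embedding with $F_l^*\omega_{D^{IV}_{N_l}} = \frac{N_l}{n+1}\omega_{\mathbb{B}^n}$, and in particular $N_l \geq n+1$.

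Substituting these identities into (\ref{eeq}) yields
\begin{equation*}
\sum_{l \in T} F_l^*\omega_{D^{IV}_{N_l}} = \Bigl(\lambda - \sum_{l \in S} \frac{N_l}{n+1}\Bigr)\,\omega_{\mathbb{B}^n}.
\end{equation*}
For $l \in T$ the set of boundary points where $F_l$ reaches $\partial D^{IV}_{N_l}$ is a proper real-analytic subvariety of $\partial \mathbb{B}^n$, obtained by applying the real-analytic identity principle to the boundary trace of a defining function of $\partial D^{IV}_{N_l}$ composed with $F_l$; hence each summand on the left is bounded in a neighbourhood of a dense open subset of $\partial \mathbb{B}^n$, while $\omega_{\mathbb{B}^n}$ blows up at every boundary point. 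The scalar on the right must therefore vanish, forcing $\sum_{l \in T} F_l^*\omega_{D^{IV}_{N_l}} \equiv 0$. Semi-positivity of each summand then forces $F_l^*\omega_{D^{IV}_{N_l}}\equiv 0$ for each $l \in T$, so each such $F_l$ is constant, contradicting $l \in T$. Hence $T=\emptyset$ and the stated dichotomy follows.

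The step I expect to be the main obstacle is the justification of the dichotomy $S\cup T$ together with the exclusion of any $l \in S$ with $N_l \leq n$. Establishing the dichotomy requires control of the boundary behaviour of the algebraic extension of $F_l$ away from its branch locus, so that the real-analytic identity principle is applicable to $\rho_{D^{IV}_{N_l}}\circ F_l$ on $\partial \mathbb{B}^n$. The exclusion of $N_l \leq n$ from $S$ requires invoking CR rigidity: a non-constant algebraic holomorphic map carrying an open piece of $\partial \mathbb{B}^n$ into $\partial D^{IV}_{N_l}$ cannot exist when $\dim_{\mathbb{R}}\partial D^{IV}_{N_l}<\dim_{\mathbb{R}}\partial \mathbb{B}^n$, and the borderline case $N_l=n$ is ruled out because $\mathbb{B}^n$ and $D^{IV}_{n}$ are inequivalent irreducible bounded symmetric domains for $n\geq 4$.
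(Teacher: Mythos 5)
Your organization is genuinely different from the paper's: where the paper finds a single non-constant component carrying a boundary piece into $\partial D^{IV}_{N_l}$, applies Theorem \ref{proper1} to it, subtracts its contribution from (\ref{eeq}), and inducts on $K$, you make the dichotomy explicit and replace the induction by a boundary blow-up comparison — a clean device which, incidentally, also makes explicit the "there must exist a nonconstant $F_l$ with the boundary condition" step that the paper leaves to the reader. However, two of your justifications are wrong, and one is a real gap. First, your argument that the non-degeneracy hypothesis $F_l(V) \not\subset \partial D^{IV}_{N_l}$ of Theorem \ref{proper1} holds "because a non-constant holomorphic map on an open set cannot factor through a real-analytic proper subvariety" is false: the paper's Example \ref{e0} exhibits a non-constant holomorphic map on an open set whose image lies entirely in $\partial D^{IV}_m$, precisely because that boundary contains complex discs. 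The correct and much simpler reason is that the algebraic extension of $F_l$ agrees with the original map, which already sends all of $\mathbb{B}^n$ into the \emph{interior} $D^{IV}_{N_l}$, so any open $V$ meeting $\mathbb{B}^n$ cannot have $F_l(V) \subset \partial D^{IV}_{N_l}$.

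Second — and you flag this yourself — you apply Theorem \ref{proper1} to every $l \in S$, but the theorem requires $N_l \geq n+1$, which the Corollary does not assume. Without showing that no $l \in S$ has $N_l \leq n$, the substitution producing your key equation $\sum_{l\in T} F_l^*\omega_{D^{IV}_{N_l}} = \bigl(\lambda - \sum_{l\in S}\tfrac{N_l}{n+1}\bigr)\omega_{\mathbb{B}^n}$ is unjustified, and in fact if some $l\in S$ has $N_l \leq n$ the corresponding $F_l^*\omega_{D^{IV}_{N_l}}$ would also blow up at the boundary, undermining the comparison. Your sketched remedy is on the right track for $N_l < n$ (CR-dimension count), but the reason you give for the borderline $N_l = n$ — that $\mathbb{B}^n$ and $D^{IV}_n$ are inequivalent domains — is not to the point, since $F_l$ is not asserted to be a biholomorphism. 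What actually rules out every $N_l \leq n$ is the Levi-form argument behind the paper's Lemma \ref{l2}: at a generic smooth point $\partial D^{IV}_{N_l}$ has CR-dimension $N_l - 1 \leq n-1$ and a degenerate Levi form, so a CR-transversal CR map from the strictly pseudoconvex $\partial\mathbb{B}^n$ (whose Levi form has full rank $n-1$) cannot exist; and non-transversality on an open boundary piece would force $F_l(V) \subset \partial D^{IV}_{N_l}$ by \cite{BER2}, which is impossible since $F_l(\mathbb{B}^n) \subset D^{IV}_{N_l}$. With these two repairs the blow-up route goes through, but as written the proof has a genuine hole.
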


We would like to point out that the same conclusion can be made with the slightly more general assumption
\begin{equation}\notag
  \omega_{\mathbb{B}^n} = \sum_{l=1}^m \lambda_{l} F^*_l \omega_{D^{IV}_{N_l}}~\text{on}~U
\end{equation}
instead of (\ref{eeq}) for positive constants $\lambda_l.$ The difference in the proof is that the algebraicity in this case follows from the argument in \cite{HY1, HY2}.

\medskip

\bigskip

{\bf Acknowledgement}: The authors are grateful to Professors J. D'Angelo, X. Huang, N. Mok and S. Ng for helpful discussions. The second author is supported in
part by National Science Foundation grant DMS-1412384 and the seed grant program at Syracuse University. This work is also supported by the National Science Foundation under Grant No. 0932078 000 while the second author was in residence at the Mathematical Sciences Research Institute in Berkeley, California, during the 2016 Spring semester.

\section{Preliminaries and Some Examples}
An irreducible Hermitian symmetric manifold of non-compact type can be realized as the four types of Cartan's classical domains  and two exceptional cases in complex Euclidean spaces(cf. \cite{H2} \cite{M1}). In particular, the complex unit ball $\mathbb{B}^n = \{z=(z_1, \cdots, z_n) \in \mathbb{C}^n: |z|^2 <1\}$ in $\mathbb{C}^n$ is a special case of the type I classical domain. The Bergman kernel is given by $$K_{\mathbb{B}^n}(z, \bar z)= c_{I} \left( 1-|z|^2 \right)^{-(n+1)}.$$ 
The type IV classical domain is defined as $$D^{IV}_m = \{Z =(z_1, \cdots, z_m) \in \mathbb{C}^m | Z \overline{Z}^t <2 ~\text{and} ~ 1- Z \overline{Z}^t + \frac{1}{4} |Z Z^t|^2 >0 \},$$
where $Z^t$ is the transpose of $Z$
and the Bergman kernel function $K_{D^{IV}_m}(Z, \bar Z)$ is explicitly given by
\begin{equation}
K_{D^{IV}_m}(Z, \bar Z) = c_{IV} \left( 1 -  Z\overline{Z}^t +\frac{1}{4} |Z Z^t|^2 \right)^{-m}. 
\end{equation}
Here $c_I, c_{IV}$ are positive constants depending on the dimensions. 
(cf. \cite{H2} \cite{M1}). The Bergman metric  
$$\omega_{\Omega}(Z) :=\sqrt{-1} \partial \bar\partial \log K_{\Omega}(Z, \bar Z)$$
of a bounded symmetric domain $\Omega$ is K\"ahler-Einstein as the Bergman kernel function is invariant under the holomorphic automorphisms. A straightforward computation shows that $\partial D^{IV}_m = \{Z \in \mathbb{C}^m: 1- Z\overline{Z}^t +\frac{1}{4} |Z Z^t|^2 =0, Z\overline{Z}^t \leq 2 \}.$
Moreover, if $m \geq 2$, the singular set $P$ of $\partial D^{IV}_m$ is given by
\begin{equation}\notag
\begin{split}
P = \{Z \in \mathbb{C}^m | 1- Z\overline{Z}^t +\frac{1}{4} |Z Z^t|^2 =0, Z\overline{Z}^t= 2 \}, \\
\end{split}
\end{equation}
which is a real $m$-dimensional variety. 

\medskip

In \cite{M5}, Mok constructed non-totally geodesics holomorphic isometries from the complex unit ball into the irreducible bounded symmetric domain $\Omega$ when rank$(\Omega) \geq 2$ by using the theory of varieties of minimal rational tangents. For the purpose of the current paper, we formulate Mok's theorem merely in the case of Type IV domains. 

\begin{theorem}[Mok] \label{mok}
Assume $m \geq 2$.
\begin{itemize}
\item[(i)] If   $F: \mathbb{B}^n \rightarrow D^{IV}_m$ is a holomorphic isometry, 
then $n \leq m-1.$
\item[(ii)] There exists a non-totally geodesic holomorphic isometric embedding $G: \mathbb{B}^{m-1}
 \rightarrow D^{IV}_m$ with $G^* \omega_{D^{IV}_m} = \lambda \omega_{\mathbb{B}^{m-1}}$ for some $\lambda > 0.$
\end{itemize}
\end{theorem}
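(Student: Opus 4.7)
The plan is to combine Mok's algebraic extension theorem for local holomorphic isometries (\cite{M4}) with a global analysis of the extended map for part (i), and to exhibit an explicit formula for part (ii).

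For part (i), I would invoke Mok's theorem that any local holomorphic isometry between bounded symmetric domains with respect to their Bergman metrics extends to a proper holomorphic map defined on the whole source domain; the extended map remains isometric by analytic continuation of $F^{*}\omega_{D^{IV}_m}=\lambda\omega_{\mathbb{B}^n}$. This equation forces $dF$ to be injective at every point, so $n\leq m$. Suppose for contradiction that $n=m$. Then the extended $F:\mathbb{B}^m\to D^{IV}_m$ is a proper holomorphic map with everywhere injective differential, hence a finite unramified covering onto $D^{IV}_m$. Since $D^{IV}_m$ is simply connected (every bounded symmetric domain is contractible via the Harish-Chandra realization), this covering is one-sheeted and $F$ is a biholomorphism. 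But $\mathbb{B}^m$ and $D^{IV}_m$ are not biholomorphic for $m\geq 2$: rank is a biholomorphic invariant, $\mathbb{B}^m$ has rank $1$, and $D^{IV}_m$ has rank $2$. This contradiction yields $n\leq m-1$.

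For part (ii), I would exhibit $G$ explicitly. Taking $n=m-1$, the map displayed in (\ref{eqncl1}) is a natural candidate. The verification proceeds in three steps. First, check that $G(\mathbb{B}^{m-1})\subset D^{IV}_m$ by expanding the two defining inequalities of $D^{IV}_m$. A useful algebraic identity is that the sum of squares of the last two components of $G$ equals $\tfrac{z_{m-1}}{1-z_{m-1}}\sum_{i=1}^{m-2}z_i^2$, so $GG^t=\tfrac{1}{1-z_{m-1}}\sum_{i=1}^{m-2}z_i^2$, which collapses $1-G\overline{G}^t+\tfrac{1}{4}|GG^t|^2$ to a tractable closed form. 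Second, show this quantity equals $(1-|z|^2)\,|h(z)|^2$ for a nonvanishing holomorphic function $h$ on $\mathbb{B}^{m-1}$; applying $\partial\bar\partial\log$ to both sides yields $G^{*}\omega_{D^{IV}_m}=\omega_{\mathbb{B}^{m-1}}$, consistent with the normalization $\lambda=m/(n+1)$ for $n=m-1$. Third, argue $G$ is not totally geodesic by evaluating its second fundamental form at the origin and exhibiting a nonzero quadratic term, or equivalently by observing that the quadratic part of $G$ cannot be eliminated by pre- and post-composition with automorphisms of $\mathbb{B}^{m-1}$ and $D^{IV}_m$.

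The main obstacle is the isometry verification in the second step, which is elementary but requires careful algebraic bookkeeping of the identity for $(1-G\overline{G}^t+\tfrac{1}{4}|GG^t|^2)$. Mok's own proof of (ii) avoids explicit coordinates and instead uses the theory of varieties of minimal rational tangents, constructing the embedding via the geometry of minimal rational curves in the compact dual of $D^{IV}_m$; this yields a uniform construction across all irreducible bounded symmetric domains of rank at least two, but in the Type IV case the explicit map above is the most direct route.
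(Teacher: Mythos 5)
The paper does not prove this theorem; it is cited to Mok \cite{M5}, whose argument goes through the theory of varieties of minimal rational tangents. Your proposal is a correct direct proof, and a genuinely different route. For part (i), your chain --- extend via \cite{M4} to a proper isometry; $dF$ is everywhere injective because $F^{*}\omega_{D^{IV}_m}=\lambda\omega_{\mathbb{B}^n}$ with $\lambda>0$, so $n\le m$; rule out $n=m$ because a proper local biholomorphism onto the simply connected $D^{IV}_m$ would be a covering, hence a biholomorphism, contradicting the biholomorphic invariance of rank ($\mathbb{B}^m$ has rank $1$, $D^{IV}_m$ rank $2$ for $m\ge 2$) --- is sound and more elementary than the VMRT machinery. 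For part (ii), the map you propose is precisely the map $R^{IV}_n$ (with $n=m-1$) that the paper itself introduces and verifies in Section 4.1 via the two identities you invoke; these collapse $1-G\overline{G}^t+\tfrac14|GG^t|^2$ to $1-|z|^2$ exactly (so the nonvanishing factor $h$ in your step two is identically $1$), giving $G^{*}\omega_{D^{IV}_m}=\omega_{\mathbb{B}^{m-1}}$, i.e.\ $\lambda=m/(n+1)=1$, consistent with Proposition \ref{con}. For non-total-geodesy, the cleanest way to complete your sketch is to note that $G(0)=0$ and that a totally geodesic holomorphic isometric embedding of Hermitian symmetric domains carrying the base point to the base point is linear in Harish-Chandra coordinates, whereas $G$ is visibly nonlinear; this is equivalent to your second-fundamental-form check at the origin. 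What your explicit route buys, beyond bypassing VMRTs, is that it produces the very map that the paper's classification (Theorem \ref{T1}) singles out as the unique rational holomorphic isometry from $\mathbb{B}^{m-1}$ into $D^{IV}_m$.
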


We now make some remarks on our main theorems. The following simple fact explains why we only consider $m \geq n+1$ in our main theorems.
\begin{lemma}\label{l2}
Let $n \geq 2
$ and $F: \mathbb{B}^n \rightarrow D^{IV}_m$ be a proper holomorphic map. Then $m \geq n+1$.
\end{lemma}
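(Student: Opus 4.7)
The plan is to rule out $m \leq n$ via two separate arguments: a fiber-dimension argument when $m < n$, and a Levi-form comparison when $m = n$.

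For $m \leq n-1$, I would invoke the standard fiber dimension estimate. Since $F$ is proper, each fiber $F^{-1}(q)$ with $q \in D^{IV}_m$ is a compact analytic subvariety of $\mathbb{B}^n$; by semi-continuity of fiber dimension, these fibers have complex dimension at least $n-m \geq 1$. But $\mathbb{B}^n$ is a bounded open subset of $\mathbb{C}^n$, and any positive-dimensional connected compact analytic subvariety in it would carry bounded nonconstant holomorphic coordinate functions, violating the maximum principle. This forces $m \geq n$.

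For $m = n$, I first argue that $F$ must be a finite branched covering. Indeed, if $\det F' \equiv 0$ on $\mathbb{B}^n$ then the image $F(\mathbb{B}^n)$ has complex dimension at most $n-1$ and the fibers are positive-dimensional, leading to the same contradiction as in the previous step; thus $\det F' \not\equiv 0$, so $F$ is open at generic points and, being proper into the connected target $D^{IV}_n$, is surjective by Remmert's theorem. Next I compare boundary Levi forms. Using the algebraicity of proper maps between bounded domains with real-algebraic boundary (Forstneric, or Huang-type extension arguments), $F$ extends holomorphically across a dense open set of $\partial \mathbb{B}^n$. Since both the branch locus of $F$ and the singular set $P \subset \partial D^{IV}_n$ are contained in proper subvarieties, I can pick a boundary point $p \in \partial \mathbb{B}^n$ at which $F$ is a local biholomorphism onto a neighborhood of $F(p)$ in the smooth part of $\partial D^{IV}_n$. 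At $p$, the Levi form of $\partial \mathbb{B}^n$ has rank $n-1$, while at $F(p)$ the Levi form of $\partial D^{IV}_n$ has rank only $n-2$, since $D^{IV}_n$ has rank $2$ and the smooth stratum of its boundary carries a nontrivial complex-tangential null direction arising from the maximal totally geodesic polydisc. As the Levi form signature is a biholomorphic invariant, this is a contradiction, so $m = n$ is ruled out.

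The main obstacle will be the second step: rigorously justifying the boundary extension of $F$ and selecting a generic boundary point avoiding both the branch locus and the singular stratum $P$, together with the computation showing that the smooth part of $\partial D^{IV}_n$ is Levi-degenerate of rank exactly $n-2$. Combining the two cases yields $m \geq n+1$.
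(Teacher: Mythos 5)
Your proposal is correct and follows essentially the same strategy as the paper: reduce to the equidimensional case, extend the proper map across an open piece of $\partial\mathbb{B}^n$ via algebraicity (the paper cites Bell's result for bounded complete circular domains, while you invoke Forstneric/Huang-type results, which works equally well here), and then derive a contradiction from the strict pseudoconvexity of $\partial\mathbb{B}^n$ versus the Levi-degeneracy of the smooth part of $\partial D^{IV}_n$. Your explicit treatment of the $m\leq n-1$ case via the fiber-dimension/maximum-principle argument, and your explicit verification that a generic boundary point avoids both the branch locus and the singular stratum, are details the paper elides with ``It suffices to show\ldots'' and ``One then easily achieves a contradiction,'' but the underlying argument is the same.
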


\begin{proof}
It suffices to show that there is no proper holomorphic map for $m= n \geq 2$. Suppose that $F: \mathbb{B}^n \rightarrow D^{IV}_n$ is a proper holomorphic map. Note that $\mathbb{B}^n$ and $D^{IV}_{n}$ are bounded complete circular domains. Then $F$ is algebraic by Bell's algebraicity result \cite{B}. It then extends holomorphically to a neighborhood $U$ of an open piece of the boundary $\partial \mathbb{B}^n$.  Note that the image of $\partial \mathbb{B}^n \cap U$ cannot be contained in the singular set of $\partial D^{IV}_n.$
One then easily achieves a contradiction  since $\partial D^{IV}_n$ is Levi-degenerate at any smooth point.
\end{proof}

\begin{remark}
The statement of Lemma \ref{l2} fails if $n=1.$ $F(z)=\sqrt{2}z$ is indeed a holomorphic isometry from the unit disc $\Delta$ to $D^{IV}_1.$
\end{remark}

As mentioned in Section 1, the assumption that $F$ is algebraic in Theorem \ref{proper1} is a necessary condition by Mok's result. We now give an example of non-algebraic proper map from
$\mathbb{B}^n$ to $D^{IV}_{n+2}$ that is not an isometry. In the following context,
we define $\sqrt{w}=\sqrt{r} e^{\sqrt{-1}\frac{\theta}{2}},$ for any complex number $w=re^{\sqrt{-1}\theta}$ with $\theta \in (-\pi, \pi].$

\begin{example}\label{e025}
Assume $n \geq 2$ and let $H=(h_1, \cdots, h_{n+1}): \mathbb{B}^n \rightarrow \mathbb{B}^{n+1}$ be a proper holomorphic map with $H(0)=0$ such that $H$ is continuous up to the boundary $\partial \mathbb{B}^n$ but is not twice continuously differentiable up to any open piece of boundary $\partial \mathbb{B}^n$(See \cite{Do}). In particular, $H$ is not algebraic. Define
$$g=1-\sqrt{1-\sum_{j=1}^{n+1}h_j^2}.$$ Then $g$ is holomorphic in $\mathbb{B}^n$ and satisfies $$g= \frac{1}{2} \left(\sum_{j=1}^{n+1}h_j^2 + g^2 \right).$$ It follows that
$$\sum_{j=1}^{n+1}|h_j|^2 + |g|^2 - \frac{1}{4} \left|\sum_{j=1}^{n+1}h_j^2+g^2\right|^2 = \sum_{j=1}^{n+1}|h_j|^2, $$
and the straightforward computation verifies that $\sum_{j=1}^{n+1}|h_j|^2 + |g|^2 < 2$ in $\mathbb{B}^n.$
Thus $G=(H, g): \mathbb{B}^n \rightarrow D^{IV}_{n+2}$ is a proper holomorphic map and is not an isometry.
\end{example}

We now recall the definition of CR transversality. Let $M \subset \mathbb{C}^n, M' \subset \mathbb{C}^N$ be two CR submanifolds. A CR map $H: M \rightarrow M'$ is called CR transversal at $p \in M$ if $dH(\mathbb CT_pM)$ is not contained in $\mathcal V'_{F(p)} \oplus \overline{\mathcal V'_{F(p)}},$
where $\mathcal V'$ is the CR bundle of $M'.$  Example \ref{e0} shows that the statement of Proposition \ref{proper2} fails if the transversality assumption is dropped, and the statement of Theorem \ref{proper1} fails if the assumption $F(U) \not \subset \partial D^{IV}_m$ is dropped.

\begin{example}\label{e0}
Let $n \geq 1, m \geq 3$. Let $G$ be a holomorphic map from a small neighborhood $U$ of $p \in \partial \mathbb{B}^n$ defined by
$$G=\left(\frac{1+\psi}{\sqrt{2}}, \frac{1-\psi}{\sqrt{-2}}, 0, ...,0\right),$$
where there are $m-2$ zero components and $\psi$ is any holomorphic function in $U.$ It is easy to see that $G$ maps $U$ to $\partial D^{IV}_{m}$ and does not extend to an isometry.
\end{example}

Example \ref{e01} show that the statement of Theorem \ref{proper1} fails when $m \geq 2n.$ Example \ref{e10} shows that the statement of Proposition \ref{proper2} fails if $n=1$  and  the statement of
Theorem \ref{proper1} fails for $n=1, m \geq 2.$

\begin{example}\label{e01}
Assume $n\geq 2$ and let $H=(h_1, \cdots, h_{2n-1})=(z_1, \cdots, z_{n-1}, z_1 z_n, \cdots, z_n^2): \mathbb{B}^n \rightarrow \mathbb{B}^{2n-1}$ be the well-known Whitney map. Define the holomorphic function
$$g=1-\sqrt{1-\sum_{j=1}^{2n-1}h_j^2}$$
on $\mathbb{B}^n.$  Note that
$$\sum_{i=1}^{2n-1}|h_{i}|^2 +|g|^2-\frac{1}{4}\left|\sum_{i=1}^{2n-1} h_{i}^2 + g^2\right|^2=\sum_{i=1}^{n-1}|z_{i}|^2 +|z_{n}|^2 \sum_{i=1}^{n}|z_{i}|^2. $$
It is easy to verify that $G=(H,g)$ is a holomorphic polynomial  proper map from $\mathbb{B}^n$ to $D^{IV}_{2n},$ while it is not an isometry. 
\end{example}

\begin{example}\label{e10}
Assume  $k \geq 1$.
Let $G_{k}$  be the holomorphic map from the unit disc  $\Delta \subset \mathbb{C}$ to $D^{IV}_2$ defined by
\begin{equation}\label{exx}
G_k=(z^k, 1-\sqrt{1-z^{2k}}).
\end{equation} It is easy to verify that $G_{k}$ is a proper
holomorphic map.  Moreover, $G_{k}$ is an isometry from $\Delta$ to $D^{IV}_2$ if and only if $k=1$ (See Proposition \ref{con}).
\end{example}

Note that all the proper map examples above  are constructed from proper maps between balls. It would be reasonable to ask whether all holomorphic proper maps are obtained in this manner. More precisely, let $H=(h_{1},..., h_{m}): \mathbb{B}^n \rightarrow D^{IV}_m$ be a proper holomorphic map. Does there always exist a holomorphic proper map $F=(f_{1},...,f_{m'}): \mathbb{B}^n \rightarrow \mathbb{B}^{m'}$ with $m' \leq m,$ such that
the following relation holds:
\begin{equation}\label{eqnconj00}
\sum_{i=1}^m |h_{i}|^2- \frac{1}{4}\left| \sum_{i=1}^m h_{i}^2 \right|=\sum_{j=1}^{m'} |f_{j}|^2~~?
\end{equation}
The answer is negative in general (See Example \ref{exhp0}), while we suspect that it is true when $m$ is small
compared to $n.$
\begin{example}\label{exhp0}
Assume $n \geq 2$.
Let $H=(h_{1},...,h_{4n-1})$ be the proper holomorphic map from $\mathbb{B}^n$ to $D^{IV}_{4n-1}$
defined by
$$\left(z_{n},\frac{z_{1}}{\sqrt{2}}, \frac{z_{1}}{\sqrt{-2}},..., \frac{z_{n-1}}{\sqrt{2}}, \frac{z_{n-1}}{\sqrt{-2}}, \frac{z_{1}z_{n}^2}{2\sqrt{2}},\frac{z_{1}z_{n}^2}{2\sqrt{-2}}..., \frac{z_{n-1}z_{n}^2}{2\sqrt{2}}, \frac{z_{n-1}z_{n}^2}{2 \sqrt{-2}}, \frac{z_{n}^3}{2\sqrt{2}}, \frac{z_{n}^3}{2\sqrt{-2}}\right).$$
By computation, we have
\begin{equation}\label{eqn4n}
\sum_{i=1}^m |h_{i}|^2- \frac{1}{4}\left| \sum_{i=1}^m h_{i}^2 \right|=\sum_{i=1}^{n}|z_{i}|^2-\frac{1}{4}\left(|z_{n}|^4(1-\sum_{i=1}^n |z_{i}|^2) \right).
\end{equation}
Note that the right hand side of (\ref{eqn4n}) cannot be written as sum of norm squares of holomorphic functions (cf. the proof of Proposition \ref{con}).
This implies there is no proper holomorphic map $F: \mathbb{B}^n \rightarrow \mathbb{B}^{m'}$ such that (\ref{eqnconj00}) holds.

\end{example}

On the other hand, it would still be interesting to study how the initial  boundary regularity of the map $F$
can be relaxed in the hypotheses of Proposition \ref{proper2}. 
We make the following conjecture along this line:

\begin{conjecture}\label{conjecture1}
Let $n \geq 2$. Any proper holomorphic map from $\mathbb{B}^n$ to $D^{IV}_{n+1}$ is a holomorphic isometry (and thus it is equivalent to one of the maps in  Theorem \ref{T1}).
\end{conjecture}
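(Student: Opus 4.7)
The plan is a two-step strategy: first promote $F$ to an algebraic map that extends holomorphically across an open piece of $\partial \mathbb{B}^n$, and second apply a boundary classification in the spirit of Proposition \ref{proper2} to conclude that $F$ is an isometry, after which Theorem \ref{T1} identifies it with (\ref{eqncl1}) or (\ref{eqncl2}) up to equivalence.

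For the first step, since $\mathbb{B}^n$ and $D^{IV}_{n+1}$ are both bounded complete circular domains with real-analytic boundaries, Bell's algebraicity theorem (used in the proof of Lemma \ref{l2}) shows that $F$ is Nash-algebraic. Consequently $F$ extends holomorphically across every point of $\partial \mathbb{B}^n$ outside a proper real-analytic subvariety $\mathcal{E}$, and the extension satisfies $F(\partial \mathbb{B}^n \setminus \mathcal{E}) \subset \partial D^{IV}_{n+1}$. By the fact recorded in Remark 1.3(4), the image cannot be contained in the singular stratum $P \subset \partial D^{IV}_{n+1}$, so at a generic point $p \in \partial \mathbb{B}^n$ we may assume $F(p)$ is a smooth boundary point.

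The second step requires upgrading this to CR transversality so that Proposition \ref{proper2} applies. If $F$ failed to be CR transversal on an open set of the smooth stratum, then $dF(\mathbb{C}T\partial\mathbb{B}^n)$ would lie in $\mathcal V'_{F(p)} \oplus \overline{\mathcal V'_{F(p)}}$, which together with algebraicity and the Levi form of the smooth part of $\partial D^{IV}_{n+1}$ would force the image of an open piece of $\partial \mathbb{B}^n$ into a complex-analytic subvariety of $\partial D^{IV}_{n+1}$; this contradicts properness (via the usual maximum-principle/proper-map argument). With CR transversality at a generic smooth boundary point in hand, Proposition \ref{proper2} (for $n \geq 4$) gives the isometry conclusion, and Theorem \ref{T1} finishes the classification.

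The main obstacle is the low-dimensional range $n = 2, 3$, which is the whole novelty of the conjecture relative to what is already proved: Proposition \ref{proper2} and Theorem \ref{proper1} both fail to cover these because the Baouendi-Ebenfelt-Huang-type rigidity for maps into the generalized hyperquadric requires $n+1 \leq 2n-3$. To handle $n = 2, 3$ one would need a substitute. A natural attempt is a direct normal-form analysis: parametrize $\partial D^{IV}_{n+1}$ near a smooth point using the defining equation $1 - Z\overline{Z}^t + \tfrac{1}{4}|ZZ^t|^2 = 0$, expand the boundary identity $|F|^2 - \tfrac{1}{4}|FF^t|^2 \equiv 1$ along $\partial\mathbb{B}^n$ in Huang-type moving-frame coordinates, and exploit algebraicity to reduce the resulting infinite family of polynomial identities among the components of $F$ to a finite classification. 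A complementary idea, specific to rank two, is to use the double foliation of the smooth stratum of $\partial D^{IV}_{n+1}$ by maximal discs and maximal spheres: each disc meets the image of $F$ in a holomorphic curve whose second fundamental form with respect to the ambient Bergman metric can be shown to vanish under the sum-of-squares constraint, forcing the target of $F$ to factor through an isometric totally geodesic submanifold that, by a dimension count for $m = n+1$, coincides with the one underlying (\ref{eqncl1}) or (\ref{eqncl2}). Either route seems to require a genuinely new input in the codimension-one range $n+1 > 2n-3$, and is where I would expect the bulk of the difficulty to lie.
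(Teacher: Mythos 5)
This is stated as a conjecture in the paper precisely because it is open; the authors only prove the isometry conclusion under the additional hypotheses of algebraicity (Theorem \ref{proper1}) or $C^2$ boundary regularity plus CR transversality (Proposition \ref{proper2}), and only for $n \geq 4$. Your proposal does not close that gap, and the first step already fails.

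The crucial error is the appeal to Bell's algebraicity theorem. The version cited and used in Lemma \ref{l2} applies to proper holomorphic maps between bounded complete circular domains of the \emph{same} dimension; there it is used for $F\colon \mathbb{B}^n\to D^{IV}_n$. Here $\dim_{\mathbb C}D^{IV}_{n+1}=n+1>n$, so the map has positive codimension and Bell's theorem does not apply. This is not a technicality: in positive codimension there is no general algebraicity (or even boundary regularity) result for proper maps, and the paper's own Example \ref{e025} exhibits a proper holomorphic map $\mathbb{B}^n\to D^{IV}_{n+2}$ that is continuous but not $C^2$ up to the boundary and not algebraic. Whether the special geometry of $\partial D^{IV}_{n+1}$ in codimension one forces regularity of arbitrary proper maps is exactly the content of the conjecture; you cannot assume it. Without algebraicity or $C^2$ smoothness up to the sphere, the extension across $\partial\mathbb{B}^n$, the transversality discussion, and the reduction to Proposition \ref{proper2} are all unavailable.

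Even granting algebraicity, the remaining ingredients (Proposition \ref{proper2}, Theorem \ref{proper1}, Theorem \ref{propergene1}) are proved only for $n\geq 4$, because the Baouendi--Ebenfelt--Huang machinery requires $N\leq 2n-2$ in the generalized-ball picture, and for $n=2,3$ the target $\mathbb{B}^{n+2}_1$ sits outside that range. You correctly identify this obstacle, but the two work-arounds you sketch (a direct normal-form expansion of the boundary identity, and an argument via the disc/sphere foliation of the smooth stratum) are only program outlines; neither is carried out, and the authors themselves indicate in Remark 1.3(3) that this range is open. So the proposal neither establishes the unconditional regularity it needs nor supplies a substitute for the rigidity machinery in low dimensions, and therefore does not constitute a proof of the conjecture.
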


\begin{remark}
Example \ref{e10} also shows that the statement of Conjecture \ref{conjecture1} fails when $n=1.$
\end{remark}

We end this section with the following observation on isometric constants. Note that this result is obtained independently by Chan-Mok \cite{CM}.

\begin{proposition}\label{con}

Let $F: \mathbb{B}^n \rightarrow D^{IV}_m$ be a local holomorphic isometric embedding with respect to the Bergman metrics of isometric constant $\lambda >0$, i.e.
\begin{equation}\label{ccc}
F^*\omega_{D^{IV}_m} = \lambda \omega_{\mathbb{B}^n}.
\end{equation}
Then
\begin{itemize}
\item If $n \geq 2,$ then $\lambda =m/(n+1).$
\item If $n=1,$ then $\lambda=m/2$ or $m.$
\end{itemize}
\end{proposition}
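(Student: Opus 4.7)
The plan is to convert the isometry equation into a polarized Bergman-kernel identity, then to combine a rank-of-kernel argument with Taylor-expansion estimates at orders $2$ and $4$.

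First, using $\omega_{\mathbb{B}^n} = -(n+1)\sqrt{-1}\,\partial\bar\partial\log(1-|z|^2)$ and $\omega_{D^{IV}_m} = -m\sqrt{-1}\,\partial\bar\partial\log N(Z,\bar Z)$ with $N(Z,\bar Z) := 1 - Z\bar Z^t + \tfrac14 |ZZ^t|^2$, equation \eqref{ccc} says that $m\log N(F,\bar F) - \lambda(n+1)\log(1-|z|^2)$ is pluriharmonic on $U$. Pre- and post-composing $F$ with automorphisms so that $F(0)=0$ (the group $\mathrm{Aut}(D^{IV}_m)$ acts transitively) makes this function vanish at the origin, so it equals $\phi(z) + \overline{\phi(z)}$ with $\phi(0)=0$. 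Polarizing in the second variable, setting $w=0$, and using $N(F(z),0) \equiv 1$ forces $\phi \equiv 0$, which yields the key functional identity
\[
N\!\left(F(z),\overline{F(w)}\right) \;=\; (1 - z\bar w^t)^{\kappa}, \qquad \kappa := \lambda(n+1)/m,
\]
as bi-holomorphic functions of $(z,\bar w)$ near the origin.

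The central observation is then a rank comparison. The left side minus $1$,
\[
-\sum_{j=1}^m F_j(z)\overline{F_j(w)} \;+\; \tfrac14\,P(z)\,\overline{P(w)},\qquad P := F\cdot F,
\]
is a Hermitian kernel of rank at most $m+1$, being a signed sum of $m+1$ rank-one kernels. On the other hand,
\[
(1-z\bar w^t)^{\kappa} - 1 \;=\; \sum_{|\alpha|\ge 1}(-1)^{|\alpha|}\binom{\kappa}{|\alpha|}\binom{|\alpha|}{\alpha}\,z^\alpha\bar w^\alpha,
\]
which is diagonal in the monomial basis and hence has rank equal to the number of $\alpha$ with $|\alpha|\ge 1$ and $\binom{\kappa}{|\alpha|}\neq 0$; this is finite if and only if $\kappa\in\mathbb Z_{\ge 0}$. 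Consequently $\kappa\in\mathbb Z_{\ge 0}$, and $\kappa\ge 1$ since $\lambda>0$.

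To bound $\kappa$ from above, I would match Taylor coefficients of $N(F(z),\overline{F(z)})=(1-|z|^2)^\kappa$ at bidegrees $(1,1)$ and $(2,2)$. Writing $F(z) = zM + Q(z,z) + O(|z|^3)$, the $(1,1)$-match yields $M\bar M^t = \kappa I_n$, and the $(2,2)$-match yields
\[
\|Q(u,u)\|^2 \;=\; \tfrac14|uBu^t|^2 \;-\; \tfrac{\kappa(\kappa-1)}{2}|u|^4, \qquad B := MM^t,
\]
for all $u\in\mathbb C^n$. Since $|uBu^t| = |(uM)\cdot(uM)|\le |uM|^2 = \kappa|u|^2$ by the elementary inequality $|v\cdot v|\le |v|^2$ in $\mathbb C^m$, non-negativity of the left side forces $\kappa\le 2$. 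Combined with $\kappa\in\mathbb Z_{\ge 1}$, this resolves the case $n=1$: $\kappa\in\{1,2\}$, so $\lambda\in\{m/2, m\}$.

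The sharpening to $\kappa\le1$ when $n\ge 2$ is the step I expect to be most delicate. Write $M = \sqrt\kappa\, U$ with $UU^*=I_n$, so that $u\mapsto uU$ is a Hermitian isometry onto an $n$-dimensional complex subspace $V\subset\mathbb C^m$, and $uBu^t = \kappa\,(uU)\cdot(uU)$. Vanishing of $uBu^t$ means $uU$ lies on the complex null cone $\{v\cdot v=0\}\subset\mathbb C^m$, a quadric hypersurface. For $n\ge 2$, a dimension count guarantees that $V$ meets this hypersurface nontrivially (the restricted quadratic form either vanishes identically on $V$, or else cuts out a quadric of complex dimension $n-1\ge 1$ inside $V$), so there is some nonzero $u_0$ with $u_0 B u_0^t=0$. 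Substituting into the $(2,2)$-identity gives $0\le\|Q(u_0,u_0)\|^2 = -\tfrac{\kappa(\kappa-1)}{2}|u_0|^4$, forcing $\kappa\le 1$. Combined with $\kappa\in\mathbb Z_{\ge 1}$, this yields $\kappa=1$ and hence $\lambda=m/(n+1)$.
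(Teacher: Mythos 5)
Your proof is correct but takes a genuinely different route at the key bounding step. Both proofs first reduce to the polarized identity $1 - F(z)\overline{F(w)}^t + \tfrac14\,F(z)F(z)^t\,\overline{F(w)F(w)^t} = (1-z\bar w^t)^{\kappa}$ with $\kappa=\lambda(n+1)/m$, and both deduce $\kappa\in\mathbb{Z}_{\geq 1}$ from the observation that the left side has finite rank as a Hermitian kernel (the paper cites Umehara for this). From there the paper compares signatures: the right side of the diagonal identity has at most two positive squares, while the binomial expansion of $(1-|z|^2)^{\kappa}$ with $n\geq 2$ and $\kappa\geq 2$ produces at least three linearly independent positive squares, contradicting the D'Angelo--Umehara signature uniqueness theorem (with a terse remark that $n=1$ is similar). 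You instead match Taylor coefficients at bidegree $(2,2)$ to get $\|Q(u,u)\|^2=\tfrac14|uBu^t|^2-\tfrac{\kappa(\kappa-1)}{2}|u|^4$, bound $|uBu^t|\leq\kappa|u|^2$ via the elementary inequality $|v\cdot v|\leq|v|^2$ to force $\kappa\leq 2$, and for $n\geq 2$ choose $u_0\neq 0$ in the quadric $\{u\,:\,uBu^t=0\}$ (nonempty because a degree-two homogeneous form in $n\geq 2$ variables over $\mathbb{C}$ has a nontrivial zero) to force $\kappa\leq 1$. Your version is more self-contained in the bounding step, avoiding the signature-invariance theorem, and the null-cone argument gives a transparent geometric reason why $n\geq 2$ excludes $\kappa=2$; it also disposes of $n=1,\kappa=3$ cleanly, a case where a bare positive-rank count does not immediately give the contradiction the paper asserts, since the positive rank of $(1-|z|^2)^3$ is only $2$.
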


\begin{proof}
Without loss of generality, we may assume $F(0)=0$ by composing the automorphisms of $\mathbb{B}^n$ and $D^{IV}_m$. By the standard reduction, (\ref{ccc}) is equivalent to
\begin{equation}\label{cc}
\left(1-|z|^2 \right)^{\lambda(n+1)/m} = 1- F(z)\overline{F(z)}^t + \frac{1}{4} |F(z)F(z)^t|^2
\end{equation}
that holds in a neighborhood $U$ of $0.$ Note that the signature of the left hand side of (\ref{cc}) is either $(1, s)$ or $(2, s)$ for some integer $s \geq 0$, meaning that it can be written as linear combination of $1$ or $2$ sum of squares minus $s$ sum of square of linearly independent holomorphic functions over positive real numbers. It fact, it is of signature $(1, s)$ if and only if $F(z)F(z)^t \equiv 0$. Obviously, the left hand side is of finite rank if and only if $\lambda(n+1)/m \in \mathbb{N}$ (cf. \cite{Um}). Write $p=\lambda(n+1)/m$. We first consider the case $n=2.$ Assume $p \geq 2$. Applying binomial formula,
$$\left( 1 - \sum_{j=1}^n |z_j|^2 \right)^p = \sum_{k=0}^p (-1)^k \binom{u}{k} \left(\sum_{j=1}^n |z_j|^2\right)^{p-k}.$$
Note that the monomials on the right hand side are linearly independent and thus the right hand side is of signature $(r', s')$ with $r' \geq 3$ if $n \geq 2$. Therefore, the left hand side and right hand side of (\ref{cc}) have different signatures and this is a contradicttion(cf. \cite{D2} or \cite{Um}). This implies the first part of the Lemma for $n \geq 2.$ In the case $n=1,$ one can similarly get a contradiction if $p \geq 3.$ The Lemma is then established.
\end{proof}

\begin{remark}
Both $\lambda=m/2$ and $m$ can be obtained when $n=1$ in Proposition \ref{con}. Indeed, the map $G_1$ in (\ref{exx}) is a holomorphic isometry with isometric constant $\lambda=1$ while the isometric map $F(z)=\left( \sqrt{2}z, 0\right): \Delta \rightarrow D^{IV}_2$ has the isometric constant $\lambda=2.$
\end{remark}


\begin{remark}
The same argument yields that if $F: \mathbb{B}^n \rightarrow \Omega$ is a holomorphic isometric embedding from a unit ball into a classical symmetric domain with respect to Bergman metric of isometric constant $\lambda$, then $\lambda (n+1) / (p+q)$, $\lambda (n+1) / (m-1)$ or $\lambda (n+1) / (m+1)$ is a positive integer when $\Omega = D^I_{p, q}, D^{II}_m$ or $D^{III}_m$ respectively.
\end{remark}



\section{On CR maps from $\partial\mathbb{B}^n$ to $\partial D^{IV}_m$}

We will prove Theorem \ref{proper1}, Proposition \ref{proper2} and Corollary \ref{coro14} in this section. To this end we first introduce the generalized complex balls and their indefinite metrics.

\begin{definition}
Let $0 \leq l \leq n, n \geq 2$.
$$\mathbb{B}^{n+l}_l=\{(w_1, \cdots, w_l, z_1, \cdots, z_n) \in \mathbb{C}^{n+l} | \sum_{j=1}^n |z_j|^2 - \sum_{j=1}^l |w_j|^2 <1 \}$$ is called the generalized complex ball with signature $l$ in $\mathbb{C}^{n+l}$ and $$\omega_{\mathbb{B}^{n+l}_l} = -\sqrt{-1}\partial\bar\partial \log \left( 1+\sum_{j=1}^l |w_j|^2 - \sum_{j=1}^n |z_j|^2 \right)$$ is the indefinite K\"ahler metric on $\mathbb{B}^{n+l}_l$.
\end{definition}

Note that $\mathbb{B}^{n+l}_l$ is the  indefinite complex hyperbolic space form and $\mathbb{B}^n_0$ is the complex unit ball $\mathbb{B}^n$ and $\omega_{\mathbb{B}^n_0}$ is the Bergman metric on $\mathbb{B}^n$ up to a positive constant $n+1$. The holomorphic automorphism group of $\mathbb{B}^{n+l}_l$ can be described as follows. We first embed $\mathbb{B}^{n+l}_l$ into $\mathbb{P}^{n+l}$ as an open set by standard coordinate embedding: $$(w_1, \cdots, w_l, z_1, \cdots, z_n) \rightarrow [1, w_1, \cdots, w_l, z_1, \cdots, z_n].$$
Therefore $\mathbb{B}^{n+l}_l=\{ [s, w_1, \cdots, w_l, z_1, \cdots, z_n] \in \mathbb{P}^{n+l} | s\not= 0,  |s|^2+\sum_{j=1}^l |w_j|^2 - \sum_{j=1}^n |z_j|^2 > 0\}$ in homogenous coordinates. The holomorphic automorphism group of $\mathbb{B}^{n+l}_l$ is a subgroup of the holomorphic automorphism group of $\mathbb{P}^{n+l}$ given by
$$U(n+l+1, l+1) = \{A \in GL(n+l+1, \mathbb{C}) | A E(l+1, n+l+1) \bar A^t = E(l+1, n+l+1)\},$$
where $E(l+1, n+l+1)$ is a diagonal matrix with $-1$ on the first $l+1$ diagonal elements and then $1$ on the next $n$ diagonal elements. It is easy to check that $\omega_{\mathbb{B}^{n+l}_l}$ is invariant under $U(n+l+1, l+1)$.

The connection between classical domain of Type IV and the generalized complex balls is as follows. Define $L: D^{IV}_m \rightarrow \mathbb{B}^{m+1}_1$ by $L(z_1, \cdots, z_m) = [1, \frac{1}{2} \sum_{j=1}^m z_j^2, z_1, \cdots, z_m]$. One can easily check that $L$ is a proper holomorphic map and furthermore is a holomorphic isometric embedding up to a positive constant $1/m$, i.e. $L^* \omega_{\mathbb{B}^{m+1}_1} = \frac{1}{m} \omega_{D^{IV}_m}$. The following is a key result in proving Theorem \ref{proper1}.

\begin{theorem}\label{propergene1}
Assume $n \geq 4, n+1 \leq N \leq 2n-2$ and assume $U$ is a connected open set in $\mathbb{C}^n$ containing $p \in \partial \mathbb{B}^n$, $V$ is an open set in $\mathbb{C}^N$ containing $q \in \partial \mathbb{B}^N_1$. Let $F$ be a holomorphic map on $U$ such that
\begin{itemize}
\item $F(U) \not\subset \partial \mathbb{B}^N_1$,
\item $F(U \cap \partial \mathbb{B}^n) \subset \partial \mathbb{B}^N_1$.
\end{itemize}
Then the following statement holds:
\begin{itemize}
\item If $N=n+1,$  then $F$ is equivalent to
\begin{equation}\label{eqn}
(0, z_1, \cdots, z_n)
\end{equation}
\item If $N > n+1,$ then $F$ is equivalent to
\begin{equation}\label{eqn1}
(\psi, \psi, z_1, \cdots, z_n, \bf 0),
\end{equation}
where ${\bf 0}$ denotes the $(N-n-2)-$dimensional zero row vector and $\psi$ is some holomorphic function on $U$.
\end{itemize}
\end{theorem}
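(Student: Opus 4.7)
The plan is to follow the Baouendi-Ebenfelt-Huang framework (\cite{BEH1, BEH2}) for rigidity of CR maps into indefinite hyperquadrics, specialized to signature-$(N-1,1)$ targets. After translation I may assume $p=0$ and $F(0)=0$, and place both hypersurfaces in Heisenberg normal form. Write $\partial\mathbb{B}^n$ as $\mathrm{Im}\,z_n = |z'|^2$ with $z'=(z_1,\ldots,z_{n-1})$, and realize $\partial\mathbb{B}^N_1$ near the origin as $\mathrm{Im}\,\eta = |u|^2 - |v|^2$ with $u\in\mathbb{C}^{N-2}$ the positive slots and $v\in\mathbb{C}$ the single negative slot. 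Writing $F$ in these coordinates as $(v(z),u(z),\eta(z))$, the boundary condition becomes the basic identity
\begin{equation}\notag
\mathrm{Im}\,\eta(z) \;=\; |u(z)|^2 - |v(z)|^2 \quad \text{on } \mathrm{Im}\,z_n = |z'|^2.
\end{equation}

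First I would establish CR transversality of $F$ at a generic boundary point: since $F(U)\not\subset\partial\mathbb{B}^N_1$ and $\partial\mathbb{B}^n$ is strictly pseudoconvex, the standard argument (non-transversality would force $F(U)$ into a proper Levi-flat real-analytic subset of $\partial\mathbb{B}^N_1$) lets me shrink $U$ and assume transversality at $p$. Composing with automorphisms of source and target then yields the BEH pre-normalization
\begin{equation}\notag
u_j(z) = z_j + O(2) \text{ for } j=1,\ldots,n-1, \qquad \eta(z) = z_n + O(2),
\end{equation}
while the excess positive components $u_n,\ldots,u_{N-2}$ and the negative component $v$ vanish to order $\geq 2$ at $0$, with prescribed higher-order weighted normalizations removing all residual gauge.

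The heart of the proof is the signature-one version of Huang's lemma. Expanding the basic identity along the complexified Segre varieties and comparing weighted Taylor coefficients yields a second fundamental-form tensor whose rank is constrained by the codimension $N-n$. In the range $n+1 \leq N \leq 2n-2$ with $n\geq 4$, the inductive bootstrap of \cite{BEH2} forces all Taylor coefficients of $u_n,\ldots,u_{N-2}$ to vanish, except possibly for a single positive component that pairs with $v$ through the tautological cancellation $|\psi|^2 - |\psi|^2 = 0$. In the edge case $N=n+1$ there is no excess positive slot available to absorb $v$, so $v\equiv 0$ is forced, and after reordering the normal form is $(0,z_1,\ldots,z_n)$. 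For $N>n+1$, the matching produces a free holomorphic $\psi$ with $v=u_n=\psi$ and all further excess components identically zero, yielding $(\psi,\psi,z_1,\ldots,z_n,\mathbf{0})$.

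The main obstacle will be handling the negative component $v$, whose presence allows cancellations in the basic identity that are absent from Huang's original positive-signature setting. I would follow the BEH strategy of tracking bidegrees separately on the positive and negative sides of the identity, using conjugate holomorphic vector fields tangent to the source Segre varieties to kill the $|v|^2$ contributions at each stage, and exploit the codimension gap $N-n \leq n-2$ together with the hypothesis $n \geq 4$ to show that the only nontrivial cancellation permitted by the dimension count is the tautological pairing of $v$ with a single positive component. All remaining interference terms are then forced to vanish by the rank bound, which gives the asserted rigidity.
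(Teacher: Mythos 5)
Your proposal follows essentially the same route as the paper's Appendix: Cayley transform to Heisenberg models, establishing CR transversality at a nearby point via the result of Baouendi--Ebenfelt--Rothschild, then running the BEH weighted normal-form induction driven by Huang's lemma and its signature generalizations, with the codimension bound $N-n\le n-2$ (enforced by $n+1\le N\le 2n-2$, $n\ge 4$) producing the tautological $|\psi|^2-|\psi|^2$ cancellation exactly as you describe. The only cosmetic difference is your informal phrasing of the transversality step (the paper invokes Theorem 1.1 of \cite{BER2} to conclude directly that generic non-transversality forces $F(U)\subset\partial\mathbb{B}^N_1$, then moves to a transversal point by the moving-point trick rather than shrinking $U$), but the substance is the same.
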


The proof follows from a very similar argument as in \cite{BEH2}. The crucial techniques in the proof
include the normal form type argument, Huang's Lemma \cite{Hu1} and its generalizations \cite{BEH2}, moving point trick \cite{Hu1}, and a transverality result in \cite{BER2}. To make it easier for the readers, we will sketch a proof in Appendix.
%
The following corollary follows from Theorem \ref{propergene1}. 

\begin{corollary}\label{propergene2}
Assume the same assumptions as in Theorem \ref{propergene1}. Then $F$ is a local holomorphic isometric embeddings from $\mathbb{B}^n$ to $\mathbb{B}^{N}_1$, i.e. $F^* \omega_{\mathbb{B}^{N}_1} =  \omega_{\mathbb{B}^{n}} / (n+1)$ on $U \cap \mathbb{B}^n.$
\end{corollary}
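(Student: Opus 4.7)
The plan is to deduce Corollary \ref{propergene2} as a direct consequence of Theorem \ref{propergene1} by verifying the isometric identity for each of the two normal forms listed there, and then invoking invariance of the relevant metrics under the equivalence group.

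First I would recall that the equivalence in Theorem \ref{propergene1} is by a biholomorphism $\phi$ of $\mathbb{B}^n$ acting on the source and an element $\Psi \in U(N+1,2)$ acting on the target $\mathbb{B}^N_1$. The Bergman metric $\omega_{\mathbb{B}^n}$ is invariant under $\mathrm{Aut}(\mathbb{B}^n)$, and, as noted in the preliminaries, $\omega_{\mathbb{B}^N_1}$ is invariant under $U(N+1,2)$. Therefore $F^*\omega_{\mathbb{B}^N_1} = \tfrac{1}{n+1}\omega_{\mathbb{B}^n}$ holds for $F$ if and only if it holds for any equivalent map, so it is enough to check the identity on the explicit normal forms \eqref{eqn} and \eqref{eqn1}.

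Next I would do the short direct computation for each case. In the notation of Section 3, the indefinite metric on $\mathbb{B}^N_1$ satisfies
\begin{equation}\notag
\omega_{\mathbb{B}^N_1} = -\sqrt{-1}\,\partial\bar\partial \log\Bigl( 1 + |w_1|^2 - \sum_{j=1}^{N-1}|z_j|^2\Bigr),
\end{equation}
while $\omega_{\mathbb{B}^n} = -(n+1)\sqrt{-1}\,\partial\bar\partial\log(1-|z|^2)$ from the Bergman kernel formula $K_{\mathbb{B}^n}=c_I(1-|z|^2)^{-(n+1)}$. For the normal form \eqref{eqn}, namely $F(z)=(0,z_1,\dots,z_n)$ with $N=n+1$, one has
\begin{equation}\notag
1+|w_1|^2-\sum_{j=1}^{n}|z_j|^2\Big|_{F(z)} = 1-|z|^2,
\end{equation}
so $F^*\omega_{\mathbb{B}^{n+1}_1} = -\sqrt{-1}\,\partial\bar\partial\log(1-|z|^2) = \tfrac{1}{n+1}\omega_{\mathbb{B}^n}$. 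For the normal form \eqref{eqn1}, namely $F=(\psi,\psi,z_1,\dots,z_n,\mathbf 0)$ with $N>n+1$, the $|\psi|^2$ contributions from the $w_1$-slot and the first $z$-slot cancel exactly, and the remaining zero components contribute nothing, giving again $1+|w_1|^2-\sum_{j=1}^{N-1}|z_j|^2\bigl|_{F(z)} = 1-|z|^2$ and therefore the same identity $F^*\omega_{\mathbb{B}^N_1}=\tfrac{1}{n+1}\omega_{\mathbb{B}^n}$.

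The verification above holds as an identity of real-analytic $(1,1)$-forms on the connected open set $U$, and it holds in particular on $U\cap\mathbb{B}^n$, which is the statement of the corollary. There is no real obstacle here: all of the work has already been done in Theorem \ref{propergene1}, and the corollary is simply the observation that both normal forms are tautologically isometric once one writes down the defining functions of the two domains. The only small point worth flagging is that the holomorphic function $\psi$ in \eqref{eqn1} is entirely unconstrained by the isometric identity, which is consistent with the fact that the pair $(\psi,\psi)$ lies in the null cone of the indefinite Hermitian form defining $\mathbb{B}^N_1$ and thus contributes nothing to the pullback of $\omega_{\mathbb{B}^N_1}$.
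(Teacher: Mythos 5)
Your proposal is correct and follows the same route as the paper: check the pullback identity $F^*\omega_{\mathbb{B}^N_1}=\omega_{\mathbb{B}^n}/(n+1)$ directly on the two normal forms \eqref{eqn} and \eqref{eqn1} produced by Theorem~\ref{propergene1}, then invoke invariance of $\omega_{\mathbb{B}^n}$ and $\omega_{\mathbb{B}^N_1}$ under the respective automorphism groups to transfer the identity to $F$ itself. The only difference is that the paper leaves the two-line pullback computation as a ``straightforward check'' while you write it out explicitly, including the cancellation of the $|\psi|^2$ terms from the null pair $(\psi,\psi)$.
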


\begin{proof}
It is straightforward to check that both maps in (\ref{eqn}) and (\ref{eqn1}) are isometries from $\mathbb{B}^n$ to $\mathbb{B}^N_{1}$, i.e. $F^* \omega_{\mathbb{B}^{N}_1} =  \omega_{\mathbb{B}^{n}} / (n+1)$. Then Theorem \ref{propergene2} follows directly from Theorem \ref{propergene1} as $\omega_{\mathbb{B}^{n}}$ and $\omega_{\mathbb{B}^{N}_1}$ are invariant under
automorphisms..
\end{proof}


We are now at the position to prove Theorem \ref{proper1}, Proposition \ref{proper2} and Corollary \ref{coro14}.  Theorem \ref{T1} will be proved in the next section.

\medskip

{\bf Proof of Theorem \ref{proper1}.} The composition $L \circ F$ is a holomorphic map satisfying the assumption in Corollary \ref{propergene2}. Thus $L \circ F$ is a local holomorphic isometry from $\mathbb{B}^n$ to $\mathbb{B}^{m+1}_1$ by Corollary \ref{propergene2}, i.e. $(L \circ F)^*\omega_{\mathbb{B}^{m+1}_1} =\frac{1}{n+1}  \omega_{\mathbb{B}^n}$ on $U \cap \mathbb{B}^n$. Note that $$ \omega_{\mathbb{B}^n} / (n+1)= (L \circ F)^*\omega_{\mathbb{B}^{m+1}_1} =  F^* \left(L^* \omega_{\mathbb{B}^{m+1}_1}\right) = \frac{1}{m} F^*\omega_{D^{IV}_{m}}~\text{on}~U \cap \mathbb{B}^n.$$
Therefore $F$ extends to a global holomorphic isometry from $\mathbb{B}^n$ to $D^{IV}_m$ by Mok's theorem \cite{M4}.
\qed

\medskip

{\bf Proof of Proposition \ref{proper2} assuming Theorem \ref{T1}.} An easy computation verifies that  $\partial D^{IV}_{n+1}$ is holomorphically nondegenerate (See \cite{BER1}) at any smooth point. Indeed, $\partial D^{IV}_{n+1}$ is  $2$-nondegenerate in the sense of Baouendi-Huang-Rothschild \cite{BHR} at any smooth point. It follows from a regularity result in [KLX] that $F$ is real analytic
in some open piece $V$ of $\partial \mathbb{B}^n.$   $F$ thus extends holomorphically to a neighborhood $U$ of $V$ in $\mathbb{C}^n.$  $F(U)$ cannot be contained in $\partial D^{IV}_{n+1}$ by the CR transversality of $F.$ We conclude that $F$ is an isometry by Theorem \ref{proper1}. Then Proposition \ref{proper2} follows from Theorem \ref{T1}.

\medskip

{\bf Proof of Corollary \ref{coro14}.} First we note that by the algebraic extension theorem of Mok \cite{M4} (following from the argument in \cite{HY1, HY2} in this case as well), $F$ is an algebraic map. By holomorphic continuation, $F$ can be extended along some path to an open set $V$ in $\mathbb{C}^n$ containing a generic boundary point of $\mathbb{B}^n$ and the holomorphic isometry equation (\ref{eeq}) is preserved along the path. Therefore, there must exist a nonconstant holomorphic map $F_l$ such that $F(V\cap \partial\mathbb{B}^n) \subset \partial D^{IV}_{N_l}$ and $F(V) \not\subset \partial D^{IV}_{N_l}$. It follows from Theorem \ref{proper1} that $F_l$ is a holomorphic isometric embedding. The corollary then follows from an induction argument.

\section{On holomorphic isometries from $\mathbb{B}^n$ into $D^{IV}_{n+1}$}

\subsection{Examples of holomorphic isometries}

Write $z=(z_{1},...,z_{n})$ to be the coordinates in $\mathbb{C}^n$ for $n \geq 2.$ Let $R^{IV}_n: \mathbb{B}^n\rightarrow D^{IV}_{n+1}$ be defined as
\begin{equation}\label{stad}
R^{IV}_n=(f_{1},...,f_{n-1},f_{n},f_{n+1}),
\end{equation}
where $f_{i}=z_{i}, 1 \leq i \leq n-1, f_{n}=\frac{P_{n}}{Q}, f_{n+1}=\frac{P_{n+1}}{Q},$
$$P_{n}=\frac{1}{2}\sum_{i=1}^{n-1}z_{i}^2-z_{n}^2+z_{n}, P_{n+1}=-\sqrt{-1}\left(\frac{1}{2}\sum_{i=1}^{n-1}z_{i}^2+z_{n}^2-z_{n}\right), Q=\sqrt{2}(1-z_{n}).$$
It is easy to verify that
$$\sum_{i=1}^{n}f_{i}^2=\frac{\sum_{i=1}^{n-1}z_{i}^2}{1-z_{n}}, ~~|f_{n}|^2+|f_{n+1}|^2=\frac{1}{4}\frac{|\sum_{i}^{n-1} z_{i}^2|^2}{|1-z_{n}|^2}+|z_{n}|^2.$$
It then follows that $R^{IV}_n$ is a holomorphic isometry from $\mathbb{B}^{n}$ to $D^{IV}_{n+1}.$ In fact, we will show that $R^{IV}_n$ is the unique rational holomorphic isometry from $\mathbb{B}^{n}$ to $D^{IV}_{n+1}$ up to holomorphic automorphisms.

\medskip

For any $\theta \in [0, \pi/4)$, define $h_\theta(z) = 1+ 2 \sqrt{-1} \sin(2\theta) z_n - z_n^2 -\cos(2\theta) \left(\sum_{j=1}^{n-1} z_j^2\right)$. Let $I_{n, \theta} = (f_1, \cdots, f_{n+1})$ be \begin{equation}\label{litchi11}
\begin{split}
f_1(z) &= z_1, \cdots, f_{n-1}(z) = z_{n-1};\\
f_{n}(z) &= \frac{(\cos\theta +\sqrt{-1} \sin\theta z_n) - \cos\theta \sqrt{h_\theta(z)}}{\cos(2\theta)};\\
f_{n+1}(z) &= \frac{(-\sqrt{-1}\sin\theta + \cos\theta z_n) + \sqrt{-1}\sin\theta \sqrt{h_\theta(z)}}{\cos(2\theta)}.
\end{split}
\end{equation}
Then $I_{n, \theta}$ is  a holomorphic isometry from $\mathbb{B}^{n}$ to $D^{IV}_{n+1}.$ In particular, 
\begin{equation}\label{eqnstad}
I_{n, 0}=\left(z_1, \cdots, z_{n-1}, 1-\sqrt{1-\sum_{j=1}^n z_j^2}, z_n\right).
\end{equation}
 We will also show that $I_{n, 0}$ is the unique irrational holomorphic isometry from $\mathbb{B}^{n}$ to $D^{IV}_{n+1}$ up to holomorphic automorphisms.

\subsection{Singularities of holomorphic isometries}
The rational holomorphic isometry $R^{IV}_n$ given in previous sections is not a totally geodesic and only produces singularities at one single point on the boundary $\partial\mathbb{B}^n$. When $n \geq 2$, one can easily avoid passing through this point by slicing $\mathbb{B}^n$ with a complex linear hyperplane. Therefore, one obtains a holomorphic polynomial isometry from $\mathbb{B}^{n-1}$ into $D^{IV}_{n+1}$. In particular, this answers the question raised by Mok in \cite{M3} (Question 5.2.2) in the negative for Type IV domains  and counter-examples for other types of classical domains are given in \cite{XY}. Note that this type of examples are discovered independently by Chan-Mok \cite{CM}. 

\begin{theorem}\label{tian}
Assume $m \geq n+2$.
There exist non-totally geodesic
 holomorphic isometries from the unit ball $\mathbb{B}^{n}$ into $D^{IV}_m$ that can be extended holomorphically to $\mathbb{C}^{n}$.
\end{theorem}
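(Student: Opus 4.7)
The plan is to obtain the required non-totally geodesic polynomial isometries by restricting the rational isometry $R^{IV}_{m-1}:\mathbb{B}^{m-1}\to D^{IV}_m$ of (\ref{stad}) along a suitable complex linear slice. The key observation is that the unique singular locus of $R^{IV}_{m-1}$ is contained in the affine hyperplane $\{z_{m-1}=1\}$, so as long as $m-1\geq n+1$ (precisely our assumption $m\geq n+2$) there is enough room to pick an $n$-dimensional complex linear subspace of $\mathbb{C}^{m-1}$ that misses this singular hyperplane entirely, and hence along which $R^{IV}_{m-1}$ restricts to a polynomial map on all of $\mathbb{C}^n$.

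Concretely, I would take the standard inclusion $\iota:\mathbb{C}^n\hookrightarrow\mathbb{C}^{m-1}$, $\iota(z_1,\ldots,z_n)=(z_1,\ldots,z_n,0,\ldots,0)$, and set $F:=R^{IV}_{m-1}\circ\iota$. Substituting $z_{n+1}=\cdots=z_{m-1}=0$ into (\ref{stad}), the denominator $\sqrt{2}(1-z_{m-1})$ collapses to the constant $\sqrt{2}$, so $F$ is given explicitly by the polynomial
\begin{equation*}
F(z)=\left(z_1,\ldots,z_n,\,0,\ldots,0,\;\frac{1}{2\sqrt{2}}\sum_{j=1}^{n}z_j^2,\;-\frac{\sqrt{-1}}{2\sqrt{2}}\sum_{j=1}^{n}z_j^2\right),
\end{equation*}
which is manifestly holomorphic on all of $\mathbb{C}^n$. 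That $F$ maps $\mathbb{B}^n$ into $D^{IV}_m$ and is a holomorphic isometry follows from the chain rule: $\iota$ is the standard totally geodesic inclusion $\mathbb{B}^n\hookrightarrow\mathbb{B}^{m-1}$, which scales the Bergman metric by the factor $m/(n+1)$, while $R^{IV}_{m-1}$ is itself a Bergman isometry, so $F^*\omega_{D^{IV}_m}=\frac{m}{n+1}\omega_{\mathbb{B}^n}$, matching Proposition \ref{con}.

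The main remaining point, and the only genuine (though rather mild) obstacle, is to verify that $F$ is not totally geodesic. I would do this by directly computing the second fundamental form of $F$ at the origin. The image of $dF(0)$ is the subspace $V$ spanned by the first $n$ coordinate directions of $T_0 D^{IV}_m\cong\mathbb{C}^m$, while the Hessian of $F$ at $0$ reads
\begin{equation*}
d^2F(0)(v,v)=\left(0,\ldots,0,\;\frac{1}{\sqrt{2}}\sum_{j=1}^n v_j^2,\;-\frac{\sqrt{-1}}{\sqrt{2}}\sum_{j=1}^n v_j^2\right),
\end{equation*}
whose only nonzero entries lie in the last two ambient coordinate directions. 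Those directions are orthogonal to $V$ with respect to the Bergman metric of $D^{IV}_m$ at the origin (which is standard Euclidean up to a positive constant), so the normal component of $d^2F(0)(v,v)$ is nonzero as soon as $\sum_j v_j^2\neq 0$. Hence the second fundamental form of $F$ does not vanish at the origin and $F$ is not totally geodesic. If a more intrinsic argument is preferred, one can alternatively invoke the classical fact that any totally geodesic holomorphic isometric embedding $\mathbb{B}^n\hookrightarrow D^{IV}_m$ is, up to automorphisms on both sides, an affine linear map in Harish-Chandra coordinates, which the degree-two polynomial $F$ above clearly is not.
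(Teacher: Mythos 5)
Your proof is correct and follows essentially the same route as the paper: the paragraph preceding Theorem \ref{tian} already describes the slicing of $R^{IV}_{m-1}$ by a linear subspace avoiding the singular hyperplane $\{z_{m-1}=1\}$, and the polynomial map you obtain agrees with the one displayed in the paper's proof up to a sign in the last coordinate and a permutation of the zero components (both of which are realized by orthogonal automorphisms of $D^{IV}_m$). Your additional verification that the second fundamental form is nonzero at the origin makes explicit the non--total-geodesy, which the paper asserts without computation.
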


\begin{proof}
The map can be indeed chosen to be polynomial. The polynomial holomorphic isometries is given by
\begin{equation}\notag
  \left(z_1, \cdots, z_{n}, \frac{\sqrt{2}}{4}\sum_{i=1}^{n}z_{i}^2, \frac{\sqrt{-2}}{4}\sum_{i=1}^{n}z_{i}^2, 0, \cdots, 0\right) : \mathbb{B}^{n} \rightarrow D^{IV}_{m}  ~{\rm for}~ m \geq n+2, n\geq 1,
\end{equation}
where there are $(m-n-2)$ zero components in the map.
\end{proof}

\subsection{Proof of Theorem \ref{T1}}

This subsection is devoted to establish Theorem  \ref{T1}. For that we will first describe the holomorphic automorphism group action on $D^{IV}_m$ in terms of the Borel embedding (cf. \cite{H1} \cite{M1}).
The hyperquadric $\mathbb{Q}^m $, the compact dual of $D^{IV}_n$ is defined by $\mathbb{Q}^m := \{[z_1, \cdots, z_{m+2}] \in \mathbb{P}^{m+1} | z_1^2+ \cdots + z_m^2 = z^2_{m+1} + z_{m+2}^2 \}$.
The Borel embedding $D^{IV}_m \subset \mathbb{Q}^m \subset \mathbb{P}^{m+1}$ is given by $$Z= (z_1, \cdots, z_m) \rightarrow \left[z_1, \cdots, z_m, \frac{1+ \frac{1}{2}Z Z^t}{\sqrt{2}}, \frac{1- \frac{1}{2}Z Z^t}{\sqrt{-2}}\right].$$
The holomorphic automorphism group of $D^{IV}_m$ is given by 
\begin{equation}\notag
{\rm Aut}(D^{IV}_{m}) = \left\{
\begin{bmatrix}
A & B\\
C & D\\
\end{bmatrix}  \in O(m, 2, \mathbb{R}) |  {\rm det}(D)>0,
\right\}
\end{equation}
where $A\in M(m, m, \mathbb{R}), B \in M(m, 2, \mathbb{R}),
 C \in M(2, m, \mathbb{R}), D \in M(2, 2, \mathbb{R})$.
The automorphism group action is given in the following explicit way. Let $Z = (z_1, \cdots, z_m) \in D^{IV}_m$ and  $T =\begin{bmatrix}
A & B\\
C & D\\
\end{bmatrix} \in {\rm Aut}(D^{IV}_{n}).$ Write $Z'=\left(\frac{1+ \frac{1}{2}Z Z^t}{\sqrt{2}}, \frac{1- \frac{1}{2}Z Z^t}{\sqrt{-2}}\right)$. Then the action of $T$ on $D^{IV}_m$ is  given by  $$T (Z) = \frac{Z A +  Z' C}{\left( Z B + Z' D \right) \left(1/\sqrt{2}, \sqrt{-1/2}\right)^t}.$$
Rephrasing in homogenous coordinates, if the holomorphic automorphism maps $Z=(z_1, \cdots, z_m) \in D^{IV}_m$ to $W=(w_1, \cdots, w_m) \in D^{IV}_m$, then there exists $T \in {\rm Aut}(D^{IV}_{m})$ such that 
$$\left[w_1, \cdots, w_m, \frac{1+\frac{1}{2}WW^t}{\sqrt{2}},  \frac{1-\frac{1}{2}WW^t}{\sqrt{-2}}\right] = \left[ z_1, \cdots, z_m,  \frac{1+\frac{1}{2}ZZ^t}{\sqrt{2}},  \frac{1-\frac{1}{2}ZZ^t}{\sqrt{-2}}\right] \cdot T.$$
 In other words, there exists nonzero $\lambda \in \mathbb{C}$, such that 
 $$\left(w_1, \cdots, w_m, \frac{1+\frac{1}{2}WW^t}{\sqrt{2}},  \frac{1-\frac{1}{2}WW^t}{\sqrt{-2}} \right) = \lambda \left(z_1, \cdots, z_m,  \frac{1+\frac{1}{2}ZZ^t}{\sqrt{2}},  \frac{1-\frac{1}{2}ZZ^t}{\sqrt{-2}}\right)\cdot T.$$ 
Note that the isotropy group $K_0$ at the
origin is $K_0 = \left\{\begin{bmatrix}
A & 0\\
0 & D\\
\end{bmatrix} \in O(m, 2, \mathbb{R}) | {\rm det}(D)=1 \right\} \cong O(m, \mathbb{R}) \times SO(2, \mathbb{R}).$

We are now at the position to prove Theorem \ref{T1}. We first establish a result on isotropy equivalence.
\begin{theorem}\label{isotr}
Let $F: \mathbb{B}^n \rightarrow D^{IV}_{n+1}$ be a holomorphic isometric embedding satisfying $F(0)=0$ and
\begin{equation}\label{type4}
F^*\omega_{D^{IV}_{n+1}} = \omega_{\mathbb{B}^n}.\end{equation} Then $F$ is isotropically equivalent to either the map $R^{IV}_n$ in (\ref{stad}) or the map $I_{n, \theta}$ in (\ref{litchi11}) for some $\theta \in [0, \pi/4)$.
\end{theorem}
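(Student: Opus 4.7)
I plan to reduce the classification to a finite-dimensional orbit problem by first algebraicizing the isometric condition. Polarizing $F^*\omega_{D^{IV}_{n+1}}=\omega_{\mathbb{B}^n}$ (with exponent $\lambda(n+1)/m = 1$ by Proposition~\ref{con}) and exponentiating produces
\[
F(z)\,\overline{F(w)}^t - \sum_j z_j \bar w_j = \tfrac14\,G(z)\,\overline{G(w)},\qquad G(z) := F(z)F(z)^t.
\]
Thus the auxiliary map $\tilde F(z) := (z_1,\ldots,z_n,\,G(z)/2) \in \mathbb{C}^{n+1}$ and $F$ share the same Hermitian reproducing kernel. If $G\equiv 0$, then $\|F(z)\|^2 = |z|^2$ and Taylor expansion forces $F$ to be linear, whose columns $v_j = x_j + \sqrt{-1}\,y_j$ satisfy both $\bar v_j^t v_k = \delta_{jk}$ and $v_j^t v_k = 0$; splitting real and imaginary parts yields $2n$ mutually orthogonal real vectors in $\mathbb{R}^{n+1}$, impossible for $n\geq 2$. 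Hence $G\not\equiv 0$, and the image of $\tilde F$ spans $\mathbb{C}^{n+1}$.

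The reproducing-kernel rigidity then furnishes a unique unitary $M\in U(n+1)$ with $F = \tilde F\,M$. Plugging into $G = FF^t$ and writing the symmetric unitary matrix $S := MM^t$ in block form $\left(\begin{smallmatrix} S_{11} & s_{12} \\ s_{12}^t & s_{22}\end{smallmatrix}\right)$ yields the quadratic functional equation
\[
\tfrac{s_{22}}{4}\,G^2 - (1 - z\,s_{12})\,G + z\,S_{11}\,z^t = 0,
\]
whose branch is pinned down by $G(0)=0$, so $G$ (and hence $F$) is determined by $S$. A direct computation of the action of $\mathrm{Aut}(D^{IV}_{n+1})$ at the origin shows this isotropy acts linearly as $Z\mapsto e^{\sqrt{-1}\theta}\,Z\,A$ with $A\in O(n+1,\mathbb{R})$, so combined with the isotropy $z\mapsto zU$ of $\mathbb{B}^n$ we get
\[
M \longmapsto \mathrm{diag}\bigl(e^{\sqrt{-1}\theta}U,\ e^{-\sqrt{-1}\theta}\bigr)\,M\,A, \qquad S \longmapsto L\,S\,L^t
\]
for $L := \mathrm{diag}(e^{\sqrt{-1}\theta}U,\ e^{-\sqrt{-1}\theta})$. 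Since two unitary $M$'s producing the same $S$ differ exactly by a right $O(n+1,\mathbb{R})$-multiplication, the classification of $F$ modulo isotropy reduces to that of $S$ modulo this twisted congruence.

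To conclude, I would use that $\nabla G(0) = 0$ implies $F'(0)$ equals the top $n$ rows of $M$, so its image $V\subset\mathbb{C}^{n+1}$ is classified by the single $O(n+1,\mathbb{R})$-invariant $r := |v^t v|/\bar v^t v \in [0,1]$ of the complex-bilinear normal $v\in\mathbb{P}^n$ to $V$ (after normalizing $v^tv$ to a non-negative real via the projective scaling). When $r=0$ (the nullcone), the quadratic degenerates to $s_{22}=0$ and $G = zS_{11}z^t/(1-z s_{12})$ is rational; matching the canonical $S$ against the $M$ arising from $R^{IV}_n$ (whose bottom $2\times 2$ block is $\tfrac{1}{\sqrt 2}\left(\begin{smallmatrix}1 & \sqrt{-1}\\ 1 & -\sqrt{-1}\end{smallmatrix}\right)$) identifies this orbit with $R^{IV}_n$. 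When $r = \cos(2\theta)\in(0,1]$ for $\theta\in[0,\pi/4)$, we have $s_{22}\neq 0$ and the branch is irrational; the explicit $F'(0)$ of $I_{n,\theta}$ computed from (\ref{litchi11}) gives normal vector $v = (0,\ldots,0,1,\sqrt{-1}\tan\theta)$, whose invariant equals $|1-\tan^2\theta|\cos^2\theta = \cos(2\theta)$, identifying this orbit with $I_{n,\theta}$. The main obstacle will be interlocking the unitarity constraint $SS^* = I$ with the twisted congruence on each block of $S$ in order to rigorously exclude any orbit outside these two asserted families.
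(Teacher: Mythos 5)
Your proposal is built on the same cornerstone as the paper's proof, namely D'Angelo's lemma, which produces a unitary $M\in U(n+1)$ with $F = \tilde F\,M$ where $\tilde F = (z, \tfrac12 FF^t)$ (in the paper's notation this is $\mathbf U^{-1}$). Where you diverge is in what you normalize: the paper works directly with the columns of $\mathbf U$, Takagi-diagonalizing $\mathbf U_0^t\mathbf U_0$ and then exploiting the fact that the $2n$ real/imaginary parts of the first $n$ columns would have to be mutually orthogonal in $\mathbb{R}^{n+1}$, which forces all but one of the first $n-1$ columns to be real; you instead pass to the symmetric unitary $S = MM^t$ and observe that the right $O(n+1,\mathbb{R})$-ambiguity in $M$ is quotiented away, so that $F$ modulo isotropy corresponds to $S$ modulo the twisted congruence $S\mapsto LSL^t$ with $L = \mathrm{diag}(e^{\sqrt{-1}\phi}U,\ e^{-\sqrt{-1}\phi})$. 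This is a genuinely cleaner bookkeeping: the quadratic $\tfrac{s_{22}}{4}G^2-(1-zs_{12})G + zS_{11}z^t=0$ you derive is correct (it follows from $G=\tilde F S\tilde F^t$) and makes the rational/irrational dichotomy transparent via $s_{22}=0$ versus $s_{22}\neq 0$. Your identifications $r=|s_{22}|$, $v=\overline{m_{n+1}}^t$, $v^tv=\bar s_{22}$, $\bar v^t v = 1$ are all correct, and the $G\equiv 0$ exclusion (forcing $2n$ orthogonal real vectors in $\mathbb{R}^{n+1}$) is sound.

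The gap you flag is real but closable, and you should carry it out explicitly. Fixing $r=|s_{22}|$, use the phase to make $s_{22}\geq 0$ and $U(n)$ to put $s_{12}=c\,e_n$ with $c\geq 0$; unitarity $S\bar S = I$ then gives $c^2+s_{22}^2=1$ (from the $(n{+}1,n{+}1)$ entry) and forces the last column/row of $S_{11}$ to be $-\bar s_{22}\,e_n$ (from the $(1,2)$-block, assuming $c\neq 0$), so $S_{11}=\mathrm{diag}(S_{11}^{(n-1)}, -s_{22})$ with $S_{11}^{(n-1)}$ a symmetric unitary of size $n-1$. The residual freedom is $V\in U(n-1)$ acting by $S_{11}^{(n-1)}\mapsto VS_{11}^{(n-1)}V^t$; since any symmetric unitary factors as $WW^t$ by Takagi, one can take $V=W^{*}$ to normalize $S_{11}^{(n-1)}=I_{n-1}$, so the orbit is indeed a single point once $r$ is fixed. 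Handle the boundary case $c=0$ (i.e.\ $r=1$, which turns out to be $\theta=0$) separately, since the argument for $(S_{11})_{\cdot n}$ requires $c\neq 0$. Finally, you should verify that knowing $V=\mathrm{Im}\,F'(0)$ alone is not what pins down $S$ — it is the unitarity of $S$ plus the normalization, not merely $r$ as an invariant of a subspace, that forces the remaining blocks; your phrasing conflates these slightly. With these points tightened, your route is a valid and arguably more conceptual alternative to the paper's column-by-column normalization.
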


\begin{proof}
{\bf First normalization:}
Write $F=(f_1, \cdots, f_{n+1})$.
By the isometry assumption, a standard reduction yields that
\begin{equation}
\sum_{i=1}^{n+1}|f_{i}|^2-\frac{1}{4}|\sum_{i=1}^{n+1} f_{i}^2|^2=\sum_{i=1}^n |z_{i}|^2.
\end{equation}
We then have \begin{equation}\label{albert11}
\left( z_1, \cdots, z_n, \frac{1}{2}\sum_{j=1}^m f^2_j(z) \right)  = (f_1(z), \cdots, f_{n+1}(z)) {\bf U}
\end{equation}
for some  $(n+1) \times (n+1)$ unitary matrix ${\bf U}$
by a lemma of D'Angelo (\cite{D2}).
Write ${\bf U} = ({\bf u_1}, \cdots, {\bf u_{n+1}})$, where each ${\bf u_i}$ for $1 \leq i \leq n+1$, is a column vector in $\mathbb{C}^{n+1}$. Write the first $n$-columns of $U$ as ${\bf U_0} = ({\bf u_1}, \cdots, {\bf u_{n}})$. By (\ref{albert11}), we have
\begin{equation}\label{albert12}
(z_1, \cdots, z_n) = (f_1(z), \cdots, f_n(z)){\bf U_0}.
\end{equation}
By the singular value decomposition of symmetric matrices, there exists an unitary $n \times n$ matrix ${\bf V}$ such that $$\bf V^t {\bf U_0}^t {\bf U_0} {\bf V} = {\rm diag}\{\lambda_1, \cdots, \lambda_n\}
$$ for real numbers $\lambda_i$ satisfying $\lambda_1 \geq \cdots \geq \lambda_n \geq 0$. Apply the unitary change of coordinate in $\mathbb{C}^n$ by letting $\widetilde z = (\widetilde z_1, \cdots, \widetilde z_n) = z \cdot {\bf V}$, where $z = (z_1, \cdots, z_n)$ or equivalently, $z = \widetilde z \cdot {\bf V}^{-1}$. By (\ref{albert12}), we have
$$(\widetilde z_1, \cdots, \widetilde z_n) = \left(f_1(\widetilde z {\bf V}^{-1}), \cdots, f_{n+1}(\widetilde z {\bf V}^{-1})\right) {\bf \hat U_0},$$ where ${\bf \hat U_0}= \bf U_0 V$ with ${\bf \hat U_0^t} {\bf \hat U_0} = {\rm diag}\{\lambda_1, \cdots, \lambda_n\}$. Moreover, by (\ref{albert11}), we have
\begin{equation}\label{albert13}
\left(\widetilde z_1, \cdots, \widetilde z_n, \frac{1}{2}\sum_{i=1}^{n+1} f^2_i(\widetilde z {\bf V}^{-1}) \right) =  \left(f_1(\widetilde z {\bf V}^{-1}), \cdots, f_{n+1}(\widetilde z {\bf V}^{-1})\right) ({\bf \hat U_0}, {\bf u_{n+1}}).
\end{equation}
Let $\widetilde F(\widetilde z) = F(\widetilde z {\bf V}^{-1})$ and write  $\widetilde F(\widetilde z) = (\widetilde f_1(\widetilde z), \cdots, \widetilde f_{n+1}(\widetilde z))$. Then equation (\ref{albert13}) can be rewritten as
\begin{equation}\label{albert14}
\left(\widetilde z_1, \cdots, \widetilde z_n, \frac{1}{2}\sum_{i=1}^{n+1} {\widetilde f}^2_i(\widetilde z) \right) =  \left(\widetilde f_1(\widetilde Z), \cdots, \widetilde f_{n+1}(\widetilde z)\right) ({\bf \hat U_0}, {\bf u_{n+1}}).
\end{equation}
Note that $({\bf \hat U_0}, {\bf u_{n+1}})$ is still an unitary $(n+1) \times (n+1)$ matrix. It follows from (\ref{albert14}) that $\widetilde F$ is also a holomorphic isometry from $\mathbb{B}^n$ to $D^{IV}_{n+1}$ and moreover, $\widetilde F$ is equivalent to $F$.
\medskip

{\bf Second normalization:}
In the following, we write ${\bf x} \cdot {\bf y}=\sum_{i=1}^k x_{i}y_{i}$ for two
$k-$dimensional vectors ${\bf x}=(x_{1},...,x_{k}), {\bf y}=(y_{1},...,y_{k}).$ We now consider the new map $\widetilde F$ in the new holomorphic coordinate $\widetilde z$. But for the simplicity of notations, we still use $F, z, {\bf U_0}$ to denote $\widetilde{F}, \widetilde{z}, {\bf\hat U_0}$ respectively. Therefore, we have
\begin{equation}\notag
\left( z_1, \cdots, z_n, \frac{1}{2}\sum_{j=1}^m f^2_j(z) \right)  = (f_1(z), \cdots, f_{n+1}(z)) ({\bf U_0}, \bf u_{n+1})
\end{equation}
with
\begin{equation}\label{albert15}
{\bf U_0^t} {\bf U_0} = {\rm diag}\{\lambda_1, \cdots, \lambda_n\}.
\end{equation}
Write $\bf U_0 = (u_1, \cdots, u_n)$. It follows from (\ref{albert15}) that
\begin{equation}\label{albert16}
\begin{split}
{\bf u_i \cdot u_i} &= \lambda_i \in \mathbb{R}~{\rm for}~ 1 \leq i \leq n, \\
{\bf u_i \cdot u_j} &=0 ~{\rm for}~ 1 \leq i \not=j \leq n.
\end{split}
\end{equation}
Note $\bf \overline U_0^t U_0 = I_n$. We have
\begin{equation}\label{albert17}
\begin{split}
{\bf u_i \cdot \overline{u_i}} &= 1 ~{\rm for}~ 1 \leq i \leq n, \\
{\bf u_i \cdot \overline{u_j}} &=0 ~{\rm for}~ 1 \leq i \not=j \leq n.
\end{split}
\end{equation}
Write ${\bf u_i = a_i} + \sqrt{-1} \bf b_i$ for $1 \leq i \leq n$. It follows from (\ref{albert16})-(\ref{albert17}) that
\begin{equation}\label{albert18}
\begin{split}
{\bf a_i \cdot {b_j}} &= 0 ~{\rm for}~ 1 \leq i, j \leq n, \\
{\bf a_i \cdot a_j} &=0 ~{\rm for}~ 1 \leq i \not=j \leq n, \\
{\bf b_i \cdot b_j} &=0 ~{\rm for}~ 1 \leq i \not=j \leq n, \\
{\bf a_i \cdot a_i + b_i \cdot b_i} &= 1~{\rm for}~ 1 \leq i \leq n.
\end{split}
\end{equation}
Therefore, these $2n$ vectors $\{\bf a_i, b_i\}_{1\leq i \leq n}$ in $\mathbb{R}^{n+1}$ are mutually orthogonal. This implies that at least $n-1$ of them are zero vectors. However, by the last equation in (\ref{albert18}), $\bf a_i$ and $\bf b_i$ cannot be both zero for each $1 \leq i \leq n$. Hence, by applying again the unitary change of coordinates in $\mathbb{C}^n$ if necessary, we assume that for each $1 \leq j \leq n-1$, either $\bf a_j$ or $\bf b_j$ is zero. Furthermore, for each fixed $1 \leq j \leq n-1$, by applying the unitary change of coordinates
$$(z_1, \cdots, z_{j-1}, z_j, z_{j+1} \cdots, z_n) \rightarrow (z_1, \cdots, z_{j-1}, \sqrt{-1}z_j, z_{j+1} \cdots, z_n)$$
 in $\mathbb{C}^n$ if necessary, we can always assume that ${\bf b_j} =0, {\bf a_j} \not=0$ for $1 \leq j \leq n-1$. Therefore, we have $\bf u_j = a_j \in \mathbb{R}^{n+1}$ for all $1 \leq j \leq n-1$, and moreover, $$(\bf u_1, \cdots, u_{n-1})^t (\bf u_1, \cdots, u_{n-1}) = I_{n-1}.$$ Extend $\{\bf u_1, \cdots, u_{n-1}\}$ to an orthonormal basis $\{\bf u_1, \cdots, u_{n-1}, c_n, c_{n+1}\}$ of $\mathbb{R}^{n+1}$ and write the $(n+1)\times(n+1)$ matrix $$\bf C = (u_1, \cdots, u_{n-1}, c_n, c_{n+1}).$$
It follows that ${\bf C},  {\bf C}^t$ are orthogonal matrices $SO(n+1)$ as ${\bf C} {\bf C}^t = \bf C^t {\bf C} = {\bf I_{n+1}}$.
Define $\widetilde F = (\widetilde f_1, \cdots, \widetilde f_{n+1}) = F \cdot (\bf C^t)^{-1}$ or equivalently $F = \widetilde F \cdot \bf C^t$. One can easily check that $\widetilde F$ is still a holomorphic isometry from $\mathbb{B}^n$ into $D^{IV}_{n+1}$ and $\widetilde F$ is equivalent to $F$. Furthermore, one has
$$\left(z_1, \cdots, z_n, \frac{1}{2} \sum_{j=1}^{n+1} {\widetilde f}^2_j\right) = (\widetilde f_1, \cdots, \widetilde f_{n+1})  \bf C^t  (\bf{U_0, u_{n+1}}).$$
Since $\bf C$ is an orthogonal matrix, then $${\bf C^t} \bf U_0 = (\bf X, \widetilde u_n)_{(n+1) \times n},$$ where $\bf X = \begin{bmatrix} \bf I_{n-1}\\ \bf 0_{2 \times (n-1)}\end{bmatrix}$ and $ \bf 0_{2 \times (n-1)}$ is the $2 \times (n-1)$ zero matrix. Note $$\overline{({\bf C^t} \bf U_0)^t} ({\bf C}^t {\bf U_0}) ={\bf \overline U_0^t} {\bf U_0} = {\bf I_n},$$ i.e. the columns of ${\bf C}^t {\bf U_0}$ are mutually orthonormal in $\mathbb{C}^{n+1}$. Then we conclude that $$\bf\widetilde u_n = \begin{bmatrix} 0, \cdots, 0, \xi_1, \xi_2 \end{bmatrix}^t$$ for $\xi_1, \xi_2 \in \mathbb{C}$ with $|\xi_1|^2 + |\xi_2|^2=1.$ 

\medskip

Note that $\bf C^t {\bf (U_0, u_{n+1})}$ is an $(n+1) \times (n+1)$ unitary matrix.
We will again use $F, {\bf U}$ to denote $\widetilde F, \bf C^t {\bf (U_0, u_{n+1})}$ respectively for the simplicity of notations. To summarize the above, we have normalize the original holomorphic isometry to the map $F$ satisfying
\begin{equation}\label{albert129}
\left(z_1, \cdots, z_n, \frac{1}{2} \sum_{j=1}^{n+1} f^2_j(z) \right) = \left(f_1(z), \cdots, f_{n+1}(z)\right) \bf U,
\end{equation} where $\bf U$ is $(n+1)\times(n+1)$ unitary matrix and $$\bf U = (X, u_n, u_{n+1})$$ for $\bf X = \begin{bmatrix} \bf I_{n-1}\\ \bf 0_{2 \times (n-1)}\end{bmatrix}$, $\bf u_n = \begin{bmatrix} 0\\ \cdots \\ 0\\ \xi_1\\ \xi_2 \end{bmatrix}$ and $\bf u_{n+1} = \begin{bmatrix} 0\\ \cdots \\ 0\\ \eta_1\\ \eta_2 \end{bmatrix}$ with $$|\xi_1|^2+|\xi_2|^2 =1, |\eta_1|^2 + |\eta_2|^2 =1 ~{\rm and}~\xi_1 \bar\eta_1 + \xi_2 \bar\eta_2 =0.$$

\bigskip

Now Replacing $F$ by $\widetilde F = e^{-\sqrt{-1}\alpha} F$ and writing $\widetilde F =(\widetilde f_1, \cdots, \widetilde f_{n+1})$, we have:
 $$
\left(z_1, \cdots, z_n, \frac{1}{2} \sum_{j=1}^{n+1} {\widetilde f}^2_j(z) \right) = \left(\widetilde f_1(z), \cdots, \widetilde f_{n+1}(z)\right) \cdot \begin{bmatrix} {e^{\sqrt{-1}\alpha} \bf I_{n-1}} & \bf 0_{(n-1) \times 1} & \bf 0_{(n-1) \times 1} \\
{\bf 0_{(n-1) \times 1}^t} &  e^{\sqrt{-1}\alpha} \xi_1 & e^{-\sqrt{-1}\alpha} \eta_1  \\
{\bf 0_{(n-1) \times 1}^t} & e^{\sqrt{-1}\alpha} \xi_2 &  e^{-\sqrt{-1}\alpha} \eta_2
\end{bmatrix}.$$
Choose a suitable $\alpha$ such that the real and imaginary parts of  $(e^{-\sqrt{-1}\alpha} \eta_1, e^{-\sqrt{-1}\alpha} \eta_2)^t$ are orthoganal. Applying the unitary change of coordinates $\hat z = (\hat z_1, \cdots, \hat z_n) = e^{-\sqrt{-1}\alpha} (z_1, \cdots, z_n)$ and defining $\hat F(\hat z) =(\hat f_1(\hat z), \cdots, \hat f_{n+1}(\hat z)) = \widetilde F(e^{\sqrt{-1}\alpha} \hat z)$, then one can easily check that $\hat{F}$ satisfies:
\begin{equation}\label{litchi12}
\left(\hat z_1, \cdots, \hat z_n, \frac{1}{2} \sum_{j=1}^{n+1} {\hat f}^2_j(\hat Z) \right) = \left(\hat f_1(\hat z), \cdots, \hat f_{n+1}(\hat z)\right) \cdot \begin{bmatrix} { \bf I_{n-1}} & \bf 0_{(n-1) \times 1} & \bf 0_{(n-1) \times 1} \\
{\bf 0_{(n-1) \times 1}^t} &   \xi_1 & e^{-\sqrt{-1}\alpha} \eta_1  \\
{\bf 0_{(n-1) \times 1}^t} &  \xi_2 &  e^{-\sqrt{-1}\alpha} \eta_2
\end{bmatrix}.\end{equation}
We will still use $F, z, \eta_i$ to denote $\hat F, \hat z, e^{-\sqrt{-1}\alpha} \eta_i$ for $i=1, 2$ respectively. Then (\ref{litchi12}) reads
$$
\left(z_1, \cdots, z_n, \frac{1}{2} \sum_{j=1}^{n+1}  f^2_j(z) \right) = \left(f_1(z), \cdots, f_{n+1}(z)\right) \begin{bmatrix} { \bf I_{n-1}} & \bf 0_{(n-1) \times 1} & \bf 0_{(n-1) \times 1} \\
{\bf 0_{(n-1) \times 1}^t} &   \xi_1 & \eta_1  \\
{\bf 0_{(n-1) \times 1}^t} &  \xi_2 &   \eta_2
\end{bmatrix}$$
with
\begin{equation}\label{ecndn}
|\xi_1|^2+|\xi_2|^2 =1, |\eta_1|^2 + |\eta_2|^2 =1 ~{\rm and}~\xi_1 \bar\eta_1 + \xi_2 \bar\eta_2 =0, {\mathrm Re}(\eta_{1},\eta_{2})^t \perp {\mathrm Im}(\eta_{1}, \eta_{2})^t.
\end{equation}
By applying an automorphism $$\begin{bmatrix} {\bf I_{n-1}} & \bf 0_{(n-1) \times 2} \\ \bf 0_{2 \times (n-1)} & \bf V_{2 \times 2} \end{bmatrix}$$ of $D^{IV}_{n+1}$ with a suitable ${\bf V} \in O(2)$, we can further make the real part of vector $(\eta_{1}, \eta_{2})$ be of form $(c, 0)$ for some $c \in \mathbb{R}.$ Consequently, by (\ref{ecndn})  we conclude that $\eta_{1} \in \mathbb{R}$ and $\eta_2 = \sqrt{-1} \eta$ for some $\eta \in \mathbb{R}$. By further applying an automorphism $$\begin{bmatrix} {\bf I_{n-1}} & \bf 0_{(n-1) \times 1} & \bf 0_{(n-1) \times 1} \\
 \bf 0_{1 \times (n-1)} &\pm 1 &0\\
 \bf 0_{1\times(n-1)} &0 &\pm 1 \end{bmatrix}$$ of $D^{IV}_{n+1}$ if necessary, we can assume that $\eta_1 \geq 0$ and $\eta \geq 0.$ Since $\eta_1^2 + \eta^2 = |\eta_1|^2 + |\eta_2|^2 =1$, write $\eta_1 = \cos\theta, \eta = \sin\theta$ for $\theta \in [0, \pi/2]$ and then $\eta_2 = \sqrt{-1} \sin\theta$. As $|\xi_1|^2+|\xi_2|^2 =1, \xi_1 \bar\eta_1 + \xi_2 \bar\eta_2 =0$, write $\xi_1 = \sqrt{-1}\sin\theta e^{\sqrt{-1}\alpha}, \xi_2 = \cos\theta e^{\sqrt{-1}\alpha}$
for $\alpha \in [0, 2\pi)$. By applying an unitary transform $(\widetilde z_1, \cdots, \widetilde z_{n-1}, \widetilde z_n) = (z_1, \cdots,  z_{n-1}, e^{\sqrt{-1}\alpha} z_n)$ in $\mathbb{C}^n$ if necessary, we may let $\xi_1 = \sqrt{-1} \sin\theta, \xi_2 = \cos\theta$. Therefore, we have normalized the map $F=(f_1, \cdots, f_{n+1})$ to make it satisfy
 $$
\left(z_1, \cdots, z_n, \frac{1}{2} \sum_{j=1}^{n+1} f^2_j(z) \right) = \left(f_1(z), \cdots, f_{n+1}(z)\right) \begin{bmatrix} {\bf I_{n-1}} & \bf 0_{(n-1) \times 1} & \bf 0_{(n-1) \times 1} \\
{\bf 0_{(n-1) \times 1}^t} &  \sqrt{-1} \sin\theta & \cos\theta  \\
{\bf 0_{(n-1) \times 1}^t} & \cos\theta &  \sqrt{-1} \sin\theta
\end{bmatrix}$$ for $\theta \in [0, \pi/2]$. Denote the matrix \begin{equation}\label{same}
\bf U = \begin{bmatrix} {\bf I_{n-1}} & \bf 0_{(n-1) \times 1} & \bf 0_{(n-1) \times 1} \\
{\bf 0_{(n-1) \times 1}^t} &  \sqrt{-1} \sin\theta & \cos\theta  \\
{\bf 0_{(n-1) \times 1}^t} & \cos\theta &  \sqrt{-1} \sin\theta
\end{bmatrix}\end{equation}
We the proceed in two different cases.

{\bf Case I:} If $\theta = \pi/4.$ In this case, we have
\begin{equation}\label{linearst}
\left(z_1, \cdots, z_n, \frac{1}{2} \sum_{j=1}^{n+1} f^2_j(z) \right) = \left(f_1(z), \cdots, f_{n+1}(z)\right) {\bf U} ,
\end{equation}
where
$${\bf U}=\begin{bmatrix} {\bf I_{n-1}} & \bf 0_{(n-1) \times 1} & \bf 0_{(n-1) \times 1} \\
{\bf 0_{(n-1) \times 1}^t} &  \frac{\sqrt{-2}}{2} & \frac{\sqrt{2}}{2}  \\
{\bf 0_{(n-1) \times 1}^t} & \frac{\sqrt{2}}{2} &  \frac{\sqrt{-2}}{2}
\end{bmatrix}.$$
By replacing $F$ by  $$F\cdot \begin{bmatrix} -\sqrt{-1}{\bf I_{n-1}} & \bf 0_{(n-1) \times 1} & \bf 0_{(n-1) \times 1} \\
 \bf 0_{1 \times (n-1)} &\sqrt{-1} &0\\
 \bf 0_{1\times(n-1)} &0 &-\sqrt{-1} \end{bmatrix},$$
and then apply the unitary transformation in $\mathbb{C}^n:(\widetilde{z_{1}},\cdots,\widetilde{z_{n}})=
(-\sqrt{-1}z_{1},\cdots, -\sqrt{-1}z_{n-1}, \sqrt{-1}z_{n}),$ we are able to make

$${\bf U}=\begin{bmatrix} {\bf I_{n-1}} & \bf 0_{(n-1) \times 1} & \bf 0_{(n-1) \times 1} \\
{\bf 0_{(n-1) \times 1}^t} &  -\frac{\sqrt{-2}}{2}& \frac{\sqrt{-2}}{2}   \\
{\bf 0_{(n-1) \times 1}^t} & \frac{\sqrt{2}}{2} & \frac{\sqrt{2}}{2}
\end{bmatrix}.$$
By solving this linear system (\ref{linearst}), one obtains the map $R^{IV}_n$ in (\ref{stad}). We note it is equivalent to the map
in (\ref{eqncl1}) by replacing $(f_{1},\cdots, f_{n+1})$ with $(f_{1},\cdots, f_{n}, -f_{n+1}).$

\bigskip

{\bf Case II:} If $\theta \in [0, \pi/2]$ with $\theta \neq \pi/4.$ The conclusion is trivial if $\theta \in [0, \pi/4).$
In the sequel, we assume   $\theta \in (\pi/4, \pi/2].$ Write $\beta = \pi/2 -\theta \in [0, \pi/4)$. Then $$\bf U = \begin{bmatrix} {\bf I_{n-1}} & \bf 0_{(n-1) \times 1} & \bf 0_{(n-1) \times 1} \\
{\bf 0_{(n-1) \times 1}^t} &  \sqrt{-1} \cos\beta & \sin\beta  \\
{\bf 0_{(n-1) \times 1}^t} & \sin\beta &  \sqrt{-1} \cos\beta
\end{bmatrix}.$$ By applying the automorphism $$(\widetilde w_1, \cdots, \widetilde w_{n+1}) = (w_1, \cdots, w_{n+1}) \begin{bmatrix} {\bf I_{n-1}} & \bf 0_{(n-1) \times 1} & \bf 0_{(n-1) \times 1} \\
{\bf 0_{(n-1) \times 1}^t} & 0 & 1 \\
{\bf 0_{(n-1) \times 1}^t} & 1 & 0
\end{bmatrix}$$ of $D^{IV}_{n+1}$, we may let $$\bf U = \begin{bmatrix} {\bf I_{n-1}} & \bf 0_{(n-1) \times 1} & \bf 0_{(n-1) \times 1} \\
{\bf 0_{(n-1) \times 1}^t} &  \sin\beta & \sqrt{-1} \cos\beta  \\
{\bf 0_{(n-1) \times 1}^t} & \sqrt{-1} \cos\beta &  \sin\beta
\end{bmatrix}.$$
Applying the automorphism $(\widetilde w_1, \cdots, \widetilde w_{n+1}) =
-\sqrt{-1} (w_1, \cdots, w_{n+1})$ of $D^{IV}_{n+1}$ and then applying the unitary transform $(\widetilde z_1, \cdots, \widetilde z_{n-1}, \widetilde z_n) = (\sqrt{-1}z_1, \cdots, \sqrt{-1}z_{n-1}, -z_n)$ of $\mathbb{C}^n$, we may let $$\bf U = \begin{bmatrix} {\bf I_{n-1}} & \bf 0_{(n-1) \times 1} & \bf 0_{(n-1) \times 1} \\
{\bf 0_{(n-1) \times 1}^t} & \sqrt{-1} \sin\beta &  \cos\beta  \\
{\bf 0_{(n-1) \times 1}^t} &-\cos\beta & -\sqrt{-1} \sin\beta
\end{bmatrix}.$$
Finally applying the automorphism $(\widetilde w_1, \cdots, \widetilde w_{n-1}, \widetilde w_{n+1}) = (w_1, \cdots, w_{n-1}, -w_{n+1})$ of $D^{IV}_{n+1}$, $$\bf U = \begin{bmatrix} {\bf I_{n-1}} & \bf 0_{(n-1) \times 1} & \bf 0_{(n-1) \times 1} \\
{\bf 0_{(n-1) \times 1}^t} & \sqrt{-1} \sin\beta &  \cos\beta  \\
{\bf 0_{(n-1) \times 1}^t} &\cos\beta & \sqrt{-1} \sin\beta
\end{bmatrix}.$$ This is the matrix in (\ref{same}). By solving the system $$
\left(z_1, \cdots, z_n, \frac{1}{2} \sum_{j=1}^{n+1} f^2_j(z) \right) = \left(f_1(z), \cdots, f_{n+1}(z)\right)  \bf U,$$ we obtain that $F$ is equivalent to $I_{n, \beta}$  in (\ref{litchi11}) for some $\beta \in [0, \pi/4)$. This establishes Theorem \ref{isotr}.
\end{proof}


\bigskip

\begin{theorem}\label{irine}
For any $\theta \in [0, \pi/4)$, $I_{n, \theta}: \mathbb{B}^n \rightarrow D^{IV}_{n+1}$ given in (\ref{litchi11})  is equivalent to $I_{n, 0}$.
\end{theorem}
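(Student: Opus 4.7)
The plan is to exhibit explicit automorphisms $\phi_\theta \in \mathrm{Aut}(\mathbb{B}^n)$ and $\psi_\theta \in \mathrm{Aut}(D^{IV}_{n+1})$ with $\psi_\theta \circ I_{n,\theta} \circ \phi_\theta = I_{n,0}$. Since both maps vanish at the origin, a natural first attempt would use isotropy automorphisms, but a direct comparison of the derivatives $dI_{n,\theta}(0)$ and $dI_{n,0}(0)$ at the origin shows no such isotropic equivalence is possible when $\theta\in(0,\pi/4)$: the last column of $dI_{n,\theta}(0)$ equals $(0,\ldots,0,-\sqrt{-1}\sin\theta,\cos\theta)^t$, and no real orthogonal rotation of the last two output coordinates can align this with the corresponding column $(0,\ldots,0,0,1)^t$ of $dI_{n,0}(0)$. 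Hence $\phi_\theta$ must move the origin.

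For $\phi_\theta$ I will take the Moebius involution of $\mathbb{B}^n$ swapping the origin with the point $p_\theta=(0,\ldots,0,\sqrt{-1}\tan\theta)$, which lies in $\mathbb{B}^n$ since $|\tan\theta|<1$ for $\theta\in[0,\pi/4)$. This particular choice is dictated by a polynomial identity: with $a=\sqrt{-1}\tan\theta$, the function $h_\theta$ under the square root in (\ref{litchi11}) pulls back as
\[
h_\theta(\phi_\theta(z))= \frac{\cos^2(2\theta)}{\cos^2\theta}\cdot\frac{1-\sum_{j=1}^n z_j^2}{(1+\sqrt{-1}\tan\theta\, z_n)^2},
\]
so that $\sqrt{h_\theta}\circ\phi_\theta$ becomes a rational multiple of the irrational function $s=\sqrt{1-\sum z_j^2}$ that appears in $I_{n,0}$. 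Moreover, the linear forms $A=\cos\theta+\sqrt{-1}\sin\theta z_n$ and $B=-\sqrt{-1}\sin\theta+\cos\theta z_n$ in the definition of $f_n,f_{n+1}$ simplify drastically under this substitution (for example, $A\circ\phi_\theta=\cos(2\theta)/(\cos\theta\,D)$ with $D=1+\sqrt{-1}\tan\theta z_n$), yielding an explicit closed-form expression for $I_{n,\theta}\circ\phi_\theta$ that has the same irrational content as $I_{n,0}$ and differs only by a rational transformation in the rational part.

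Next I will construct $\psi_\theta$ via the Borel embedding $L:D^{IV}_{n+1}\hookrightarrow\mathbb{Q}^{n+1}\subset\mathbb{P}^{n+2}$, under which $\mathrm{Aut}(D^{IV}_{n+1})$ is realized as the projective action of $O(n+1,2,\mathbb{R})$ preserving the quadric $\sum X_i^2=UV$. I will seek $\psi_\theta$ of block-diagonal form $\mathrm{diag}(I_{n-1},M_\theta'')$, where $M_\theta''$ is a $4\times 4$ matrix acting on the last four projective coordinates $(X_n,X_{n+1},U,V)$ and preserving the restricted quadric $X_n^2+X_{n+1}^2-UV$ (of signature $(2,2)$ over the reals). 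Expanding the projective images of $I_{n,\theta}\circ\phi_\theta$ and of $I_{n,0}$ in the four-function basis $\{1,s,z_n,sz_n\}$ reduces the matching condition to a linear system for $M_\theta''$, which one can solve explicitly. The main obstacle will be verifying that the resulting $M_\theta''$ lies in the real form $O(2,2,\mathbb{R})$ and not merely in $O(4,\mathbb{C})$, so that $\psi_\theta$ is a genuine element of $\mathrm{Aut}(D^{IV}_{n+1})$; this amounts to a finite algebraic check on the reality of certain combinations of entries, after which the identity $\psi_\theta\circ I_{n,\theta}\circ\phi_\theta=I_{n,0}$ holds by construction and Theorem \ref{irine} follows.
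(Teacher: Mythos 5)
Your proposal is correct and follows essentially the same route as the paper: a M\"obius transformation of $\mathbb{B}^n$ sending $0$ to $(0,\ldots,0,\sqrt{-1}\tan\theta)$ (your involution $\phi_\theta$ agrees with the paper's matrix $\mathbf{B}$ up to the linear isotropy $z_n\mapsto -z_n$), followed by a block-diagonal element of $O(n+1,2,\mathbb{R})$ acting on the last four projective coordinates via the Borel embedding. Your pullback identity for $h_\theta\circ\phi_\theta$ checks out, and the only cosmetic difference is that the paper exhibits the $4\times 4$ block $\mathbf{V}\in O(2,2,\mathbb{R})$ explicitly and verifies it, whereas you propose to solve for it as a linear system and then check reality.
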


\begin{proof}
We first apply the Borel embedding to embed $\mathbb{B}^n$ as an open subset of $\mathbb{P}^n$ and $D^{IV}_{n+1}$ as an open subset of $\mathbb{Q}^{n+1} \subset \mathbb{P}^{n+2}$, 
where the Borel embedding is given by $$z = (z_1, \cdots, z_{n+1}) \in D^{IV}_{n+1} \rightarrow \left[z_1, \cdots, z_{n+1}, \frac{1+ \frac{1}{2}z z^t}{\sqrt{2}}, \frac{(1- \frac{1}{2}z z^t)}{\sqrt{-2}}\right] \in \mathbb{Q}^{n+1} \subset \mathbb{P}^{n+2}.$$ We write $[z, s] = [z_1, \cdots, z_n, s]$ to denote the homogeneous coordinates in $\mathbb{P}^n$. Then under homogeneous coordinates, $I_{n, \theta}$ is identified with
\begin{equation}\label{homo}
\begin{split}
\mathcal{I}_{n, \theta}(z, s)
&= \left[z_1, \cdots, z_{n-1}, \phi_{n, \theta}(z, s), \phi_{n+1, \theta}(z, s), \phi_{n+2, \theta}(z, s), \phi_{n+3, \theta}(z, s)  \right]
\end{split}
\end{equation} from $\mathbb{P}^{n}$ to $\mathbb{P}^{n+2}$
where \begin{equation}
\begin{split}
\phi_{n, \theta}(z, s) &=\frac{(\cos\theta s +\sqrt{-1} \sin\theta z_n) - \cos\theta \sqrt{H_{\theta}(z, s)}}{\cos(2\theta)}; \\
\phi_{n+1, \theta}(z, s) &= \frac{(-\sqrt{-1}\sin\theta s + \cos\theta z_n) + \sqrt{-1}\sin\theta \sqrt{H_{\theta}(z, s)}}{\cos(2\theta)}; \\
\phi_{n+2, \theta}(z, s) &=\frac{1+\cos(2\theta)}{\sqrt{2}\cos(2\theta)}s + \frac{\sqrt{-1}\tan(2\theta) z_n}{\sqrt{2}} -\frac{1}{\sqrt{2}\cos(2\theta)} \sqrt{H_\theta(z, s)} ; \\
\phi_{n+3, \theta}(z, s) &=\frac{\cos(2\theta)-1}{\sqrt{-2}\cos(2\theta)}s - \frac{\sqrt{-1}\tan(2\theta) z_n}{\sqrt{2}} +\frac{1}{\sqrt{2}\cos(2\theta)} \sqrt{H_\theta(z, s)};\\
H_\theta(z, s) &= s^2 + 2 \sqrt{-1} \sin(2\theta) z_n s - z^2_n - \cos(2\theta) \sum_{j=1}^{n-1} z_j^2.
\end{split}\end{equation}
In particular, $$\mathcal{I}_{n, 0}(z, s) = \left[ z_1, \cdots, z_{n-1}, s-\sqrt{H_0}, z_n, \frac{2s-\sqrt{H_0}}{\sqrt{2}}, \frac{\sqrt{H_0}}{\sqrt{-2}}  \right].$$
Let $${\bf B} = \begin{bmatrix} \bf I_{n-1} & \bf 0 & \bf 0\\ \bf 0 & \frac{\cos\theta}{\sqrt{\cos(2\theta)}} & \frac{-\sin\theta}{\sqrt{\cos(2\theta)}}\sqrt{-1} \\ \bf 0 &  \frac{\sin\theta}{\sqrt{\cos(2\theta)}}\sqrt{-1} & \frac{\cos\theta}{\sqrt{\cos(2\theta)}}
\end{bmatrix} \in U(n, 1) = \rm{Aut}(\mathbb{B}^n),$$
and define $\mathcal{\hat I}_{n, \theta}(z, s) = \mathcal{I}_{n, \theta}\left( (z, s) \cdot {\bf B} \right).$ 
Then it follows from the straightforward calculation that $$\mathcal{\hat I}_{n, \theta} =\left[z_1, \cdots, z_{n-1}, \frac{s-\cos\theta \sqrt{H_0}}{\sqrt{\cos(2\theta)}}, \frac{z_n+\sin\theta\sqrt{-H_0}}{\sqrt{\cos(2\theta)}}, \frac{2\cos\theta s - \sqrt{H_0}}{\sqrt{2\cos(2\theta)}}, \frac{-2\sin\theta z_n-\sqrt{-H_0}}{\sqrt{2\cos(2\theta)}} \right].$$
Let $T = \begin{bmatrix} \bf I_{n-1} & \bf 0_{(n-1)\times 4} \\
\bf 0_{4 \times (n-1)} & \bf V
 \end{bmatrix}$ with $${\bf V}=\frac{1}{\sqrt{\cos(2\theta)}}
\begin{bmatrix} 1-4\sin^2(\theta/2) & 0 & 2\sqrt{2}\sin^2(\theta/2) & 0 \\
0 & 1 & 0 & -\sqrt{2}\sin\theta \\
2\sqrt{2}\sin^2(\theta/2) & 0 & 1-4\sin^2(\theta/2) & 0\\
0 & -\sqrt{2}\sin\theta & 0 & 1
 \end{bmatrix}. $$
Then one can verify that $T \in {\rm Aut}(D^{IV}_{n+1})$ and
\begin{equation}\notag
\begin{split}
&~~ \mathcal{I}_{n, 0} \cdot T \\
&= \left( z_1, \cdots, z_{n-1}, s-\sqrt{H_0}, z_n, \frac{2s-\sqrt{H_0}}{\sqrt{2}}, \frac{\sqrt{H_0}}{\sqrt{-2}}  \right) \cdot T \\
&= \left(z_1, \cdots, z_{n-1}, \frac{s-\cos\theta \sqrt{H_0}}{\sqrt{\cos(2\theta)}}, \frac{z_n+\sin\theta\sqrt{-H_0}}{\sqrt{\cos(2\theta)}}, \frac{2\cos\theta s - \sqrt{H_0}}{\sqrt{2\cos(2\theta)}}, \frac{-2\sin\theta z_n - \sqrt{-H_0}}{\sqrt{2\cos(2\theta)}} \right)\\
&= \mathcal{\hat I}_{n, \theta}=\mathcal{I}_{n, \theta}\left( (z, s) \cdot {\bf B} \right). 
\end{split}\end{equation}
This implies that $I_{n, \theta}$ is equivalent to $I_{n, 0}.$
\end{proof}

Note that a rational map from $\mathbb{B}^n \rightarrow D^{IV}_{n+1}$ cannot be equivalent to an irrational map. Thus combining Theorem \ref{isotr},  \ref{irine},  we obtain the classification result Theorem \ref{T1} for holomorphic isometries from $\mathbb{B}^n$ into $D^{IV}_{n+1}.$

\begin{remark}
$n=1$ is a special case ($n+1 = 2n$).
Let $F$ be a holomorphic isometry from $\Delta \subset \mathbb{C}$ to $D^{IV}_2$ satisfying $F^*\omega_{D^{IV}_{2}} = \omega_{\Delta}$. It follows from the same argument of Theorem \ref{T1} that $F$ is either equivalent to the totally geodesic embedding $(\sqrt{2}z / 2, \sqrt{-2}z /2)$ or equivalent to the non-totally geodesic embedding $I_{1, 0}$. Note that $D^{IV}_2$ is biholomorphic to bidisc $\Delta^2 \subset \mathbb{C}^2$. In the Euclidean coordinate of $\Delta^2$, the first map is $z \rightarrow (z, 0)$ and $I_{1, 0}$ is the square root embedding constructed by Mok \cite{M4}. This classification result from $\Delta$ to $\Delta^2$ was obtained earlier by Ng (cf. \cite{Ng1} Theorem 7.1).
\end{remark}


\newpage

\section*{\center Appendix}

\bigskip

We sketch a proof for Theorem \ref{propergene1} in this Appendix. Theorem \ref{propergene1} can be proved by using the idea from \cite{BEH2} in a very similar manner. We first introduce the following notations(See more details in \cite{BH} and \cite{BEH2}). Assume $N \geq 3$ and let $\mathbb{H}^{N}_1 \subset \mathbb{C}^N$ be the hyperquadric of signature 1 defined by

$$\mathbb{H}^{N}_1 = \{(z, w=u+\sqrt{-1}v )\in \mathbb{C}^N | v= \sum_{j=1}^{N-2} |z_j|^2 - |z_{N-1}|^2\},$$ where $z=(z_1, \cdots, z_{N-1})$. Let $\mathbb{H}^n$ be the Heisenberg hypersurface

$$\mathbb{H}^{n}=\{(z, w=u+\sqrt{-1}v ) \in \mathbb{C}^n  | v= \sum_{j=1}^{n-1} |z_j|^2 \},$$ where $z=(z_1, \cdots, z_{n-1})$. To prove Theorem \ref{propergene1}, it suffices to show the following theorem.

\begin{theorem}\label{hyper}
Assume $n \geq 4, n+1 \leq N \leq 2n-2$ and let $F$ be a holomorphic map from a connected open set $U \subset \mathbb{C}^n$ containing 0 to $\mathbb{C}^N$. If $F(U \cap \mathbb{H}^n) \subset \mathbb{H}^N_1$ and $F(U) \not\subset \mathbb{H}^N_1$, then
\begin{itemize}
\item $F$ is equivalent to $(z_1, \cdots, z_{n-1}, 0, w)$ if $N=n+1$;
\item $F$ is equivalent to $( z_1, \cdots, z_n, {\bf 0}, \psi, \psi, w)$ if $N > n+1$, where $\psi$ is some holomorphic function on $U$.
\end{itemize}
\end{theorem}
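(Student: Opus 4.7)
The plan is to prove Theorem \ref{hyper} by following closely the framework developed by Baouendi--Ebenfelt--Huang in \cite{BEH2} for CR maps between hyperquadrics of signature $1$; Theorem \ref{propergene1} then follows by transferring the result through the standard biholomorphic identification of $\mathbb{B}^N_1$ with an open piece of $\mathbb{H}^N_1$.

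First, invoking the CR transversality criterion of \cite{BER2}, the hypothesis $F(U) \not\subset \mathbb{H}_1^N$ combined with $F(U \cap \mathbb{H}^n) \subset \mathbb{H}_1^N$ forces $F$ to be CR transversal at a generic boundary point; after composing with appropriate automorphisms of $\mathbb{H}^n$ at the source and $\mathbb{H}_1^N$ at the target we may assume $0 \in U$, $F(0) = 0$, and that $F$ is CR transversal at $0$. Writing $F = (\tilde f, \phi, g)$ with $\tilde f \in \mathbb{C}^{N-2}$ the positive-signature block, $\phi \in \mathbb{C}$ the single negative-signature direction, and $g \in \mathbb{C}$ the ``$w$''-coordinate, the image condition becomes
\begin{equation*}
\mathrm{Im}\, g(z, w) = |\tilde f(z, w)|^2 - |\phi(z, w)|^2 \quad \text{whenever } \mathrm{Im}\, w = |z|^2.
\end{equation*}
Using the isotropy subgroups fixing the respective origins, $F$ may be brought to a Chern--Moser-type partial normal form with $g_w(0) = 1$, $g_z(0) = 0$, $\tilde f_w(0) = 0$, $\phi(0) = 0$, $\phi_w(0) = 0$, and $\tilde f_z(0)$ having pairwise orthonormal columns.

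Next, complexify the defining equation by replacing $\bar w$ with $w - 2\sqrt{-1}\, |z|^2$ and collect terms in $(z, \bar z)$. At the lowest nontrivial bidegree one obtains a Hermitian identity of the shape
\begin{equation*}
\sum_{j=1}^{N-2} A_j(z)\, \overline{A_j(z)} - B(z)\, \overline{B(z)} \equiv C(z, \bar z) \cdot \sum_{k=1}^{n-1} z_k \bar z_k,
\end{equation*}
with $A_j, B$ holomorphic polynomials in $z \in \mathbb{C}^{n-1}$. The signature-$1$ version of Huang's Lemma established in \cite{BEH2}, valid precisely in the codimension range $N - n \le n - 2$, forces the $A_j$ to be the standard identity-type components with all nonlinear corrections vanishing, and forces $B$ either to vanish identically (when $N = n+1$, yielding the zero component in $(z_1,\ldots,z_{n-1},0,w)$) or to coincide with one of the $A_j$ (when $N > n+1$, producing the repeated component $\psi$ in the target).

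Finally, the pointwise identification must be propagated to all of $U$. For each $p \in U \cap \mathbb{H}^n$ near $0$ choose $\sigma_p \in \mathrm{Aut}(\mathbb{H}^n)$ and $\tau_p \in \mathrm{Aut}(\mathbb{H}_1^N)$ sending $0 \mapsto p$ and $F(p) \mapsto 0$ respectively, and apply the pointwise rigidity of the previous step to $F_p := \tau_p \circ F \circ \sigma_p^{-1}$; varying $p$ and comparing Taylor coefficients gives Huang's moving-point identities (\cite{Hu1}, refined in \cite{BEH2}), which kill all remaining higher-order terms and determine $F$ up to automorphism as claimed. The principal obstacle is the signature-$1$ Huang-type lemma in the middle step: the indefinite form permits cancellations absent from the positive-definite setting, and one must verify that within the codimension range $N - n \le n - 2$ the only such cancellation is a single pair $(A_{j_0}, B)$ with $A_{j_0} = B$, which is exactly the structure responsible for the two identical $\psi$-components and where the sharpness of the hypothesis $N \le 2n - 2$ enters the argument.
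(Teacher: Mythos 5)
Your proposal follows essentially the same route as the paper: reduce to the CR transversal case via the transversality result of \cite{BER2}, normalize, run a Huang-lemma/moving-point argument adapted from \cite{BEH2} to the signature-$1$ setting, and transfer to $\partial\mathbb{B}^N_1$ by a Cayley transform. The ingredients are right and you correctly pinpoint that $N\le 2n-2$ is used exactly to keep the signature-$1$ Huang-type lemma applicable (since then the number of $\varphi$-terms $N-n$ is $\le n-2$). However, the middle step as you describe it is a genuine compression that would not survive being written out. You present the key rigidity as a one-shot Hermitian identity ``at the lowest nontrivial bidegree'' to which Huang's lemma is applied, forcing ``all nonlinear corrections'' to vanish, followed by a separate final moving-point propagation step. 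In the paper the actual argument is a nontrivial double induction on weighted degree: one proves (Lemma \ref{huang}) that if $f^{(t-1)}\equiv 0$, $g^{(t)}\equiv 0$, and $\langle\varphi^{(s_1)},\overline{\varphi^{(s_2)}}\rangle_1\equiv 0$ for all $t\le 2(s^*-1)$, then the same vanishing holds at weight $2s^*-1$ and $2s^*$; each induction step invokes not only Huang's lemma but also a generalization from \cite{EHZ2} (Theorem \ref{4}, about polynomials in the class $\tilde S_{n-2}$), and the moving-point trick is used \emph{inside} the induction (in the sublemma killing $a^{(1)}_p$, $d^{(0)}_p$), not as a posterior propagation step. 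The lowest-bidegree analysis alone only kills the weight-$2$ and weight-$3$ jets and gives $\langle\varphi^{(2)},\overline{\varphi^{(2)}}\rangle_1\equiv 0$; without the induction you have no control at higher weights. Likewise, your statement that $B$ ``coincides with one of the $A_j$'' does not come out of the Huang-lemma step directly: what the induction actually produces is $\langle\varphi,\overline\varphi\rangle_1\equiv 0$ globally, and only then does D'Angelo's linear-algebraic lemma give a unitary normalization of the $\varphi$-block into the repeated-$\psi$ form (or the zero form when $N=n+1$). You should make the inductive scheme explicit — the single-step description is not a valid replacement for it.
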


To prove Theorem \ref{hyper}, we will first establish the following proposition:

\begin{proposition}\label{hyper2}
Let $F=(\tilde f, g)$ be a holomorphic map from an open connected set $U \subset \mathbb{C}^n$ containing 0 to $\mathbb{C}^N$ with $n \geq 4, n+1 \leq N \leq 2n-2$. Assume that $F(U\cap \mathbb{H}^n) \subset \mathbb{H}^N_1$, $F(0)=0$ and $F$ is CR transversal at 0. Then $\frac{\partial g}{\partial w}(0)>0$ and $F$ is
\begin{itemize}
\item equivalent to $(z_1, \cdots, z_{n-1}, 0, w)$ if $N=n+1$;
\item equivalent to $( z_1, \cdots, z_n, {\bf 0}, \psi, \psi, w)$ if $N > n+1$, where $\psi$ is some holomorphic function on $U$.
\end{itemize}
\end{proposition}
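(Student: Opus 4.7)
The plan is to adapt the normal-form-plus-rigidity scheme of Baouendi-Ebenfelt-Huang \cite{BEH2} to the present mixed-signature situation, where the source hyperquadric has signature $0$ and the target has signature $1$. First I would use the CR transversality hypothesis, together with the transversality result of \cite{BER2}, to show that $\lambda := \partial g/\partial w(0)$ is a nonzero real number; a further elementary reality argument (comparing the pure $w$ and $\bar w$ terms in the polarized defining equation) forces $\lambda > 0$. Splitting the target coordinates as $F = (f_1, \ldots, f_{n-1}, \phi_1, \ldots, \phi_{N-n-1}, \phi_{N-n}, g)$, where $\phi_{N-n}$ is the negative-signature direction, I would then compose $F$ with suitable isotropy automorphisms of $\mathbb{H}^n$ and $\mathbb{H}^N_1$ fixing $0$ (dilations, Heisenberg translations, and unitary rotations on the positive-signature block) to put $F$ into the preliminary normal form
\[
f_j(z,w) = z_j + O(2), \qquad \phi_k(z,w) = O(2), \qquad g(z,w) = w + O(2).
\]

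Next, the boundary condition $F(U \cap \mathbb{H}^n) \subset \mathbb{H}^N_1$ polarizes to the formal identity
\[
\frac{g(z,w) - \bar g(\chi, \eta)}{2\sqrt{-1}} = \sum_{j=1}^{n-1} f_j(z,w)\overline{f_j}(\chi,\eta) + \sum_{k=1}^{N-n-1} \phi_k(z,w)\overline{\phi_k}(\chi,\eta) - \phi_{N-n}(z,w)\overline{\phi_{N-n}}(\chi,\eta)
\]
on the complexified quadric $\{w - \bar\eta = 2\sqrt{-1}\, z \cdot \bar\chi\}$. Expanding both sides in Taylor series and equating coefficients produces a hierarchy of Hermitian polynomial identities encoding the second fundamental form of $F$ at $0$. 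I would then invoke Huang's lemma \cite{Hu1} and its mixed-signature generalization from \cite{BEH2}, applied to these identities, to conclude that the \emph{geometric rank} of $F$ at the origin vanishes. The critical input is the codimension bound $N - n \leq n - 2$, which is precisely the regime in which the Huang-type rank inequality forces vanishing. The moving point trick of \cite{Hu1} then propagates this conclusion from $0$ to an open set of $U \cap \mathbb{H}^n$.

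Once the geometric rank is zero, the analysis of the polarized identity collapses: the $f_j$'s and $g$ must remain essentially linear in $(z, w)$, and the only freedom left is in the extra components $\phi_k$, which can be nontrivial only in configurations compatible with the signature balance on the right-hand side of the identity. If $N = n + 1$, there is no positive-signature direction left to cancel against $\phi_1 = \phi_{N-n}$, so $\phi_1 \equiv 0$ and $F$ is equivalent to $(z_1, \ldots, z_{n-1}, 0, w)$. If $N > n + 1$, the rank-zero output together with a unitary rotation on the $(\phi_1, \ldots, \phi_{N-n-1})$ block forces exactly one of the positive-signature $\phi_k$ to coincide with $\phi_{N-n}$; calling this common holomorphic function $\psi$ and arranging the zero components appropriately, I recover the stated normal form $(z_1, \ldots, z_{n-1}, \mathbf{0}, \psi, \psi, w)$.

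The principal obstacle lies in the rigidity step: verifying that the signature-sensitive Huang lemma still produces vanishing geometric rank once the negative-signature term $-|\phi_{N-n}|^2$ is inserted into the standard positive definite Hermitian rank inequality. One must track the signs carefully and confirm that the hypothesis $N \leq 2n-2$ (rather than the weaker $N \leq 2n-1$ that would suffice in the balanced case) is sharp enough to force a contradiction unless the rank is zero. The remaining passage from vanishing geometric rank to the explicit pairing structure of the $\phi_k$'s is a finite-dimensional linear algebra computation carried out using the automorphism group $U(N-n-1, 1)$ acting on the extra signature-$(N-n-1, 1)$ block.
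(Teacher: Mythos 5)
Your proposal follows essentially the same route as the paper (normal form after isotropy automorphisms, Huang's lemma and its signature generalization from \cite{BEH2}, moving point trick, and a final linear-algebra step via D'Angelo's lemma to pair the $\varphi$ components), and the ingredients you identify are the right ones. However, there is one genuine gap in the middle of your argument.

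You write that ``once the geometric rank is zero, the analysis of the polarized identity collapses: the $f_j$'s and $g$ must remain essentially linear in $(z,w)$.'' That is the conclusion, but in this mixed-signature setting it is not an immediate consequence of geometric rank zero, and it is where the bulk of the technical work actually lies. In the pure sphere-to-sphere case, geometric rank zero together with the codimension bound directly forces linearity because there are no extra $\varphi$-components. Here the extra block $\varphi=(\varphi_1,\ldots,\varphi_{N-n})$ can carry arbitrarily high-order Taylor terms, and these feed back into the defining identity at every weighted degree. The paper therefore runs a genuine double induction (Lemma~\ref{huang}) in the weighted degree $s=2s^*-1$ and $s=2s^*$, proving simultaneously that $f^{(t-1)}\equiv 0$, $g^{(t)}\equiv 0$, and $\langle\varphi^{(s_1)},\overline{\varphi^{(s_2)}}\rangle_1\equiv 0$ for $s_1+s_2=t$. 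Each odd step relies on a D'Angelo-type factorization of the accumulated $\varphi$-terms followed by a membership argument in the class $\tilde S_{n-2}$ of \cite{EHZ2} and an application of their Theorem on the operator $\mathcal{L}(p,q)$; each even step uses complexification, restriction to the complex hypersurface $w=\bar\eta+2\sqrt{-1}\langle z,\bar\xi\rangle$, and Corollary~2.2 of \cite{BEH2} repeatedly, together with the moving point trick (which in the paper is used inside the inductive step to kill the residual $a^{(1)}(z)w^{s^*-1}$ and $d^{(0)}w^{s^*}$ terms, rather than for spatial propagation). Without this machinery your ``collapse'' step is unjustified; the codimension bound $N-n\le n-2$ is used precisely so that Huang's lemma, the $\tilde S_{n-2}$ criterion, and Corollary~2.2 of \cite{BEH2} apply at every stage of the induction, not just at the initial geometric-rank step. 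A second, minor point: the transversality theorem of \cite{BER2} is not needed for Proposition~\ref{hyper2} itself, where CR transversality at $0$ is an assumption; it enters only in the passage to Theorem~\ref{hyper}, where one must first locate a point of transversality when it is not given.
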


We merely prove the case $N > n+1$ in Proposition \ref{hyper2} and the other case follows similarly. \\

{\bf Proof of Proposition \ref{hyper2}.} We first show $\frac{\partial g}{\partial w}(0)>0$. Write $$F=(\tilde f, g)=(f, \varphi, g)=(f_1, \cdots, f_{n-1}, \varphi_1, \cdots, \varphi_{N-n}, g)$$ and write the tangent vector fields of $\mathbb{H}^n$ at 0
\begin{equation}\notag
\begin{split}
L_i &= \frac{\sqrt{-1}}{2} \frac{\partial}{\partial z_i} - \bar z_i \frac{\partial}{\partial w}, 1 \leq i \leq n-1, \\
T&= \frac{\partial}{\partial w} + \frac{\partial}{\partial \bar w}.
\end{split}
\end{equation}
$F(U \cap \mathbb{H}^n) \subset \mathbb{H}^N_1$ implies that \begin{equation}\label{cr1}
\rho(F, \bar F) = -\frac{g-\bar g}{2\sqrt{-1}} + |f|^2 + \sum_{i=1}^{N-n-1} |\varphi_i|^2 - |\varphi_{N-n}|^2 =0
\end{equation}
holds on $\mathbb{H}^n$. Here $|f|^2= \sum_{i=1}^{n-1} |f_i|^2.$
Applying $L^k_i$ to (\ref{cr1}) and evaluating at 0, we get
$$\frac{\partial^k g}{\partial z^k_i}(0)=0.$$
Applying $T$ to (\ref{cr1}) and evaluating at 0, we get $$\frac{\partial g}{\partial w}(0) = \overline{\frac{\partial g}{\partial w}(0)}.$$ This implies $\frac{\partial g}{\partial w}(0) = \lambda \in \mathbb{R}$. Since $F$ is transversal at 0, we have $\lambda \not= 0.$
We now write \begin{equation}\notag
\begin{split}
g &= \lambda w + O(|(z, w)|^2), \\
\tilde f_j &= b_j w + \sum_{i=1}^n a_{ij} z_i + O(|(z, w)|^2),
\end{split}
\end{equation}
where $b_j \in \mathbb{C}, a_{ij} \in \mathbb{C}$. The second equation can be rewritten as
$$\tilde f = w (b_1, \cdots, b_{N-1}) + (z_1, \cdots, z_{n-1}) A + \hat{\tilde f},$$ where $A = (a_{ij})_{(n-1) \times (N-1)}$ is an $(n-1) \times (N-1)$ matrix and $\hat{\tilde f} = O(|(z, w)|^2)$. We call a function $h$ on $U \cap \mathbb{H}^n$ has weighted degree $s$, denoted by $h \in o_{wt}(s)$, if $$\lim_{t \rightarrow 0+} \frac{h(tz,  t \bar z, t^2 u)}{|t|^s} =0.$$
For a smooth function $h(z, \bar z, u)$ defined in $U \cap \mathbb{H}^n$, we denote by $h^{(k)}(z, \bar z, u)$ the sum of terms of weighted degree $k$ in the Taylor expansion of $h$ at 0. We also denote by $h^{(k)}(z, \bar z, u)$ a weighted homogeneous polynomial of  degree $k$. When $h^{(k)}(z, \bar z, u)$ extends to a weighted holomorphic polynomial  of degree $k$, we write it as $h^{(k)}(z, w)$ or $h^{(k)}(z)$ if it depends only on $z$.

Note that (\ref{cr1}) implies that
\begin{equation}\label{cr2}
\frac{g-\bar g}{2\sqrt{-1}} = |f|^2 + \sum_{i=1}^{N-n-1} |\varphi_i|^2 - |\varphi_{N-n}|^2
\end{equation}
holds on $w=u+\sqrt{-1} |z|^2$ near 0. We collect terms of weighted degree two on both sides of (\ref{cr2}) to get
\begin{equation}
\lambda |z|^2 = z A E(N-2, N-1) \bar A^t \bar z^t,
\end{equation}
which further implies $$\lambda I_n = A E(N-2, N-1) \bar A^t.$$ Here $E(N-2, N-1)$ denotes the $(N-1) \times (N-1)$ diagonal
matrix with its first $(N-2)$ diagonal element $+1$ and the rest $-1.$ It follows from linear algebra that $\lambda >0.$

\medskip

We now fix some notations. For two $m$-tuples $x=(x_1, \cdots, x_m), y=(y_1, \cdots, y_m)$ of complex numbers, we write $<x, y>=\sum_{j=1}^m x_j y_j$ and $|x|^2 = <x, \overline{x}>$ and write $<x, y>_1=\sum_{j=1}^{m-1} x_j y_j - x_m y_m$ and $|x|_1^2 = <x, \overline{x}>_1$.

In this following context, to make notations easier, we will assume $N=2n-2.$ The proof of other cases is of no significant difference. By the same argument as in \cite{BH} (cf. Lemma 2.2), we can assume that $F$ has the following normalization
\begin{equation}\label{normal1}
\begin{split}
f(z, w) &= z+ \frac{\sqrt{-1}}{2} a^{(1)}(z) w + o_{wt}(3),\\
\varphi(z, w) &= \varphi^{(2)}(z) + o_{wt}(2),\\
g(z, w) &= w+ o_{wt}(4),
\end{split}
\end{equation}
with $$<a^{(1)}(z), \bar z> |z|^2 = <\varphi^{(2)}(z), \overline{\varphi^{(2)}(z)}>_1.$$

By Huang's lemma (cf. \cite{Hu1} Lemma 3.2 or \cite{BEH2} Lemma 2.1), $$a^{(1)}(z) \equiv 0 ~\text{and}~ <\varphi^{(2)}(z), \overline{\varphi^{(2)}(z)}>_1 \equiv 0.$$ Therefore, $$f^{(3)}\equiv 0.$$
Now suppose that we have obtained
\begin{equation}\label{cr4}
g^{(t)}\equiv 0 ~\text{and}~ f^{(t-1)}\equiv 0
\end{equation}
for $3 \leq t < s$. (Note that (\ref{cr4}) is already proved for $s=5$.) Now by collecting terms of weighted degree $s$, we obtain
\begin{equation}\label{cr5}
{\rm Im} \{g^{(s)}(z, w) - 2 \sqrt{-1} <\bar z, f^{(s-1)}(z, w)> \} = \sum_{s_1+s_2=s} <\varphi^{(s_1)}(z, w), \overline{\varphi^{(s_1)}(z, w)}>_1
\end{equation}
when $w=u +\sqrt{-1} <z, \bar z>$. We shall use the notation
$$\mathcal{L}(p, q)(z, \bar z, u) = {\rm Im} \{q(z, w) - 2\sqrt{-1}<\bar z, p(z, w)> \}|_{w = u +\sqrt{-1} <z, \bar z>},$$
where $p(z, w) = (p_1(z, w), \cdots, p_{n-1}(z, w))$ and $q(z, w)$ are holomorphic polynomials.  Equation (\ref{cr5}) can be written as
\begin{equation}\label{star}
\mathcal{L}(f^{(s-1)}, g^{(s)})(z, \bar z, u) = \sum_{s_1+s_2=s} <\varphi^{(s_1)}(z, w), \overline{\varphi^{(s_1)}(z, w)}>_1|_{w = u +\sqrt{-1} <z, \bar z>}.
\end{equation}

The following result is crucial in the proof of Proposition \ref{proper2}.

\begin{lemma}\label{huang}
Let $F=(f, \varphi, g)$ be any normalized map as above sending an open piece $M$ of $\mathbb{H}^n$ near 0 into $\mathbb{H}^N_1$ with $N \leq 2n-2$. Assume that for all $3 \leq t \leq 2(s^*-1)$,
\begin{equation}\label{cr6}
f^{(t-1)} \equiv 0, g^{(t)}\equiv 0, <\varphi^{(s_1)}, \overline{\varphi^{(s_2)}}>_1 \equiv 0
\end{equation}
for any $(s_1, s_2)$ with $s_1+s_2=t $. Then (\ref{cr6}) holds also for $t=2(s^*-1)+1$ and $t=2s^*$ for any such a map $F$.
\end{lemma}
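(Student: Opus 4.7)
The plan is to prove the conclusions for $t = 2s^* - 1$ (the odd step) and $t = 2s^*$ (the even step) in order, applying equation (\ref{star}) first at $s = 2s^* - 1$ and then at $s = 2s^*$, and in each case invoking a Huang-type rigidity lemma in signature one to decouple the resulting identity.

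\textbf{The odd step.} I would evaluate (\ref{star}) at $s = 2s^*-1$. The LHS is $\mathcal{L}(f^{(2s^*-2)}, g^{(2s^*-1)})$, which by the normalization depends only on $f^{(2s^*-2)}$ and $g^{(2s^*-1)}$. The RHS is a sum over pairs $(s_1,s_2)$ with $s_1+s_2=2s^*-1$ and $s_1,s_2 \geq 2$ of $\langle \varphi^{(s_1)}, \overline{\varphi^{(s_2)}}\rangle_1$ evaluated on $w=u+\sqrt{-1}|z|^2$. Since $2s^*-1$ is odd, no diagonal term $s_1=s_2$ appears, and each pair $(s_1,s_2)$ couples with its transpose $(s_2,s_1)$ to produce $2\mathrm{Re}\,\langle \varphi^{(s_1)}, \overline{\varphi^{(s_2)}}\rangle_1$. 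I would then invoke Huang's lemma (cf.\ Lemma 3.2 in \cite{Hu1} and its signature-one generalization in \cite{BEH2}) to decouple the identity: the LHS must vanish, forcing $f^{(2s^*-2)} \equiv 0$ and $g^{(2s^*-1)} \equiv 0$, and each bilinear pairing on the RHS must vanish as well, giving $\langle \varphi^{(s_1)}, \overline{\varphi^{(s_2)}}\rangle_1 \equiv 0$ for all pairs with $s_1+s_2=2s^*-1$.

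\textbf{The even step.} Next I would apply (\ref{star}) at $s = 2s^*$, now equipped with the vanishings obtained from the odd step. The RHS contains a distinguished diagonal contribution $|\varphi^{(s^*)}|_1^2$ together with off-diagonal pairs that couple into real parts of cross-pairings, and the LHS reads $\mathcal{L}(f^{(2s^*-1)}, g^{(2s^*)})$. A second application of the signature-one Huang-type rigidity lemma should force $f^{(2s^*-1)} \equiv 0$, $g^{(2s^*)} \equiv 0$, and every bilinear pairing $\langle \varphi^{(s_1)}, \overline{\varphi^{(s_2)}}\rangle_1$ with $s_1+s_2=2s^*$ to vanish; in particular the diagonal Hermitian form $|\varphi^{(s^*)}|_1^2$ vanishes identically.

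The main obstacle is the Huang-type decoupling argument applied to a Hermitian form of signature one rather than the classical positive-definite form: one negative sign is allowed on the RHS, so the rigidity step has to rule out the possibility of a negative contribution from the $\varphi_{N-n}$ slot balancing a nontrivial $\mathcal{L}(f^{(s-1)}, g^{(s)})$. This is precisely the role of the dimension constraint $N \leq 2n-2$: the number of $\varphi$-components $N-n$ is at most $n-2$, strictly below the critical threshold $n-1$, which places us in the range where the Huang-type rigidity survives and explains why the statement fails once $N \geq 2n-1$, as already illustrated by Example \ref{e01}.
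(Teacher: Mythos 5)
Your plan captures the right high-level intuition (exploit the small-signature constraint $N\leq 2n-2$ through a Huang-type decoupling), but it skips the mechanism that makes the odd step work and it is genuinely wrong about the even step.

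For $t=2s^*-1$, the paper does not apply a rigidity lemma directly to (\ref{star}). It first notes that the inductive hypothesis gives $\langle\varphi^{(s_1)},\overline{\varphi^{(s_1)}}\rangle_1\equiv 0$ for $2\leq s_1\leq s^*-1$, which by D'Angelo's lemma forces the linear dependence $\varphi^{(s_1)}_j=a^{s_1}_j\varphi^{(s_1)}_{n-2}$ for $1\leq j\leq n-3$. Since $2s^*-1$ is odd, in every pair $s_1+s_2=2s^*-1$ at least one index is $\leq s^*-1$, so each cross-pairing $\langle\varphi^{(s_1)},\overline{\varphi^{(s_2)}}\rangle_1$ factors as a single holomorphic times anti-holomorphic polynomial; this is what places the right-hand side of (\ref{star}) in the class $\tilde S_{n-2}$ and makes Theorem~\ref{4} applicable. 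Without this reduction the $\tilde S_{n-2}$ hypothesis is not verified, and "invoking Huang's lemma" is not yet justified. (The additional step of deducing $\langle\varphi^{(s_1)},\overline{\varphi^{(s_2)}}\rangle_1\equiv 0$ for each pair from $A\equiv 0$ also needs the remark that the pairs populate distinct bidegrees.)

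For $t=2s^*$ your plan fails. The diagonal contribution $|\varphi^{(s^*)}|^2_1$ cannot be factored, because the inductive hypothesis only controls $\varphi^{(s_1)}$ for $s_1\leq s^*-1$; hence the right-hand side of (\ref{star}) at $s=2s^*$ is \emph{not} known to lie in $\tilde S_{n-2}$, and a "second application of the signature-one rigidity lemma" does not go through. The paper's actual argument is structurally different: one complexifies (\ref{2star}), applies the tangent fields $L_j$, restricts to $w=0$, $\eta=2\sqrt{-1}\langle\bar z,\xi\rangle$, separates terms by bidegree in $(\xi,z)$, and uses D'Angelo's lemma together with Corollary~2.2 of \cite{BEH2} repeatedly to pin $f^{(2s^*-1)}$ and $g^{(2s^*)}$ down to the special form $a^{(1)}(z)w^{s^*-1}$ and $d^{(0)}w^{s^*}$; only then does the moving-point trick kill $a^{(1)}$ and $d^{(0)}$, and a separate bidegree argument handles the remaining cross-pairings with $s_1+s_2=2s^*$. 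None of this is captured by your proposal, so the even step has a genuine gap.
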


Once Lemma \ref{huang} has been proved, we conclude by induction that (\ref{cr6}) holds for all $t \geq 3$. We shall need some notation and results from \cite{BEH2} and \cite{EHZ2}. Given a real-valued power series $A(z, \bar z, w, \bar w)$, we use the expansion
$$A(z, \bar z, w, \bar w) = \sum_{\mu \nu \gamma \delta} A_{\mu \nu \gamma \delta} (z, \bar z) w^{\gamma} \bar w^{\delta}$$ for $(z, w) \in \mathbb{C}^{n-1} \times \mathbb{C}$ where $A_{\mu \nu \gamma \delta} (z, \bar z)$ is a bihomogeneous polynomial in $(z, \bar z)$ of bidegree $(\mu, \nu)$ for every  $(\mu, \nu, \gamma, \delta) \in \mathbb{Z}^4_+$. We recall from \cite{EHZ2} that $A(z, \bar z, w, \bar w)$ belongs to the class ${\tilde S}_k$ for a positive integer $k$ if $A$ vanishes at least up to order 2 at 0 and for every $(\mu, \nu, \gamma, \delta) \in Z^4_+$, we have $$ A_{\mu \nu \gamma \delta} (z, \bar z) w^{\gamma} \bar w^{\delta} = \sum_{j=1}^k p_j(z, w) \overline{q_j(z, w)},$$ where $p_j, q_j$ are homogeneous holomorphic polynomials of the appropriate degree. We recall the following result from \cite{EHZ2}.

\begin{theorem}\label{4} \cite{EHZ2}
Let $A(z, \bar z, w, \bar w)$ be a real-valued weighted homogeneous polynomial of degree $s \geq 5$ and assume that $A \in \tilde S_{n-2}$. If $p(z, w) = (p_1(z, w), \cdots, p_{n-1}(z, w))$ and $q(z, w)$ are weighted homogeneous holomorphic polynomials of degree $s-1$ and $s$ respectively, such that $$\mathcal{L}(p, q)(z, \bar z, u) = A(z, \bar z, w, \bar w)|_{w=u+\sqrt{-1}|z|^2}.$$ Then $$p \equiv 0, q\equiv 0 ~\text{and}~ A \equiv 0.$$
\end{theorem}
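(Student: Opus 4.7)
My plan is to complexify the identity, differentiate in the conjugate variables to extract the components of $p$, and then force a contradiction using the rank bound. First I would polarize by treating $\bar z$ as an independent variable $\chi$ and $\bar w$ as $\eta$; the complexification of the Heisenberg constraint $w=u+i|z|^2$ is $\eta=w-2i\langle z,\chi\rangle$. Substituting this into both sides of $\mathcal{L}(p,q)=A|_{w=u+i|z|^2}$ turns the identity into a polynomial equation
\begin{equation*}
\frac{q(z,w)-\bar q(\chi,w-2i\langle z,\chi\rangle)}{2i}-\langle\chi,p(z,w)\rangle-\langle z,\bar p(\chi,w-2i\langle z,\chi\rangle)\rangle = A(z,\chi,w,w-2i\langle z,\chi\rangle),
\end{equation*}
where $A(z,\chi,w,\eta)$ denotes the polarization of $A(z,\bar z,w,\bar w)$. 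The rank hypothesis $A\in\tilde S_{n-2}$ polarizes to a statement that each homogeneous piece of the right-hand side is a sum of at most $n-2$ tensor-product terms.

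Next I would apply the operators $\partial/\partial\chi_j$ and then set $\chi=0$. The $\chi_j$-derivative of the left-hand side produces $-p_j(z,w)$ from the $\langle\chi,p\rangle$ term, plus $\bar w$-derivatives of $\bar q$ and $\bar p$ coming from chain rule on $\eta=w-2i\langle z,\chi\rangle$. After evaluating at $\chi=0$ these correction terms involve only the values of $\bar q,\bar p$ at $(0,w)$, which is data of strictly lower weighted degree. Consequently each $p_j(z,w)$ is expressed, modulo lower-order data, as $\partial_{\chi_j} A$ at $\chi=0$, which by the rank hypothesis lies in a fixed linear span of at most $n-2$ holomorphic polynomials of the appropriate bi-degree. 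Since $p$ has $n-1$ components, a nontrivial linear relation among the $p_j$'s must appear.

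Third, I would argue by induction on the weighted degree $s$. The base case $s=5$ is amenable to direct computation: $p$ has weighted degree $4$, $q$ degree $5$, only finitely many monomial types arise, and the rank-$(n-2)$ bound is immediately incompatible with having $n-1$ independent top-order components of $p$. For the inductive step, the linear relation above eliminates one component of $p$ at the top weighted degree; feeding this back into $\mathcal L(p,q)=A|_{w=u+i|z|^2}$ forces the top piece of $q$, and hence the top piece of $A$, to vanish, so the inductive hypothesis applies to the remaining lower-degree terms.

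The hardest point will be controlling the interaction between the per-coefficient rank decomposition of $A$ and the substitution $\eta=w-2i\langle z,\chi\rangle$, since this substitution mixes different $(\gamma,\delta)$ bi-weights after expansion. Keeping a uniform rank-$(n-2)$ bound through this mixing, and ensuring that the correction terms produced by differentiation are genuinely of lower weighted degree (which is where the assumption $s\geq 5$ enters, to give the degree arithmetic enough slack), will require careful bookkeeping. Sharpness at $\tilde S_{n-1}$, where nontrivial triples $(p,q,A)$ do exist, provides a useful check that the argument must genuinely use the strict inequality and not merely the nondegeneracy of $\mathcal L$.
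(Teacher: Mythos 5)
Your overall plan---polarize in $\chi = \bar z$, $\eta = \bar w$ on the Segre variety $\eta = w - 2i\langle z,\chi\rangle$, extract components by differentiating in $\chi$, and then play the number of components of $p$ against the rank bound on $A$---is in the right spirit, but it has two gaps that are not cosmetic, and in fact they are exactly where all the work in the cited argument of Ebenfelt--Huang--Zaitsev lives.

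The first gap is that you never state or use the algebraic lemma that converts the rank hypothesis into vanishing. The bare count ``$n-1$ components of $p$ against a rank-$(n-2)$ span'' does not give a contradiction: the $p_j$ could simply satisfy a nontrivial linear relation, or all be zero, and nothing in your argument rules out the first alternative. What is actually needed is Huang's lemma (Lemma 3.2 of \cite{Hu1}, or Lemma 2.1 and its Corollary 2.2 in \cite{BEH2}): a Hermitian form in $(z,\bar z)$ of rank $\leq n-2$ that is divisible by $|z|^2$ must vanish, because $|z|^2$ already has rank $n-1$. This is the engine of the whole proof; without it the rank bound has no teeth. Your proposal says the correction terms coming from $\bar q$ and $\bar p$ after setting $\chi=0$ are ``strictly lower weighted degree,'' but that is not correct: $\bar q_{\chi_j}(0,w)$ and $z_k\bar p_{k,\chi_j}(0,w)$ are still of weighted degree $s-1$, the same as $p_j$; what is true is that they depend only on $w$ (or on $w$ and linearly on $z$), and it is this structural restriction---not a degree drop---that has to be exploited, again via Huang-type arguments applied coefficient by coefficient in powers of $w$.

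The second gap is the one you flag yourself as ``the hardest point'': the class $\tilde S_{n-2}$ controls the rank of each fixed $(\mu,\nu,\gamma,\delta)$ piece $A_{\mu\nu\gamma\delta}(z,\bar z)w^\gamma\bar w^\delta$ separately, and the substitution $\eta = w - 2i\langle z,\chi\rangle$ mixes different $(\gamma,\delta)$. You acknowledge that a uniform rank bound must be preserved through this mixing but do not show how. In the actual argument this is handled by expanding in powers of $w$ and $\bar\eta$ and using the D'Angelo rank criterion together with Corollary 2.2 of \cite{BEH2} at each step, so the mixing is controlled piece by piece rather than globally. Finally, your proposed induction on the weighted degree $s$ does not match the statement: the theorem is asserted for a single homogeneous degree $s\geq 5$, so there is no descent from $s$ to $s-1$ available, and ``$s=5$'' is not a base case that feeds into anything. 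The degree bound $s\geq 5$ enters instead to guarantee there is enough room in the Chern--Moser normalization (i.e.\ that the relevant pieces of $(p,q)$ are not absorbed by the kernel of $\mathcal L$), not to anchor an induction. In short: the skeleton is plausible, but the decisive lemma is absent, the degree bookkeeping after $\partial_{\chi_j}|_{\chi=0}$ is asserted rather than established, and the rank-mixing problem is named but not solved.
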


\medskip

{\bf Proof of Lemma \ref{huang}.} As in \cite{BEH2}, we shall first prove that (\ref{cr6}) holds for $t=2(s^*-1)+1=2s^*-1$ with $s^* \geq 3$. Recall the hypotheses in Lemma \ref{huang} implies that (\ref{star}) holds with $s=t$. Note that the hypotheses also imply that $<\varphi^{(s_1)}, \overline{\varphi^{(s_1)}}>_1 \equiv 0$ for $2 \leq s_1 \leq s^*-1$. By a lemma of D'Angelo \cite{D2}, we conclude that there are constants $a^{s_1}_j$ such that
\begin{equation}\label{cr7}
\varphi^{(s_1)}_j = a^{s_1}_j \varphi^{(s_1)}_{n-2}, ~\text{for}~1 \leq j \leq n-3, 2 \leq s_1 \leq s^*-1.
\end{equation}

Now if $s_1+s_2 = 2 s^*-1$, then ${\rm min}(s_1, s_2) \leq s^*-1$. Without of loss of generality, assume $s_1 = {\rm min}(s_1, s_2)$. Then it follows from (\ref{cr7}) that
$$<\varphi^{(s_1)}, \overline{\varphi^{(s_2)}}>_1 = \varphi^{(s_1)}_{n-2} q^{(s_2)},$$ where $q^{(s_2)}(z, w) = \sum_{j=1}^{n-3} a^{s_1}_j \overline{\varphi^{(s_2)}_j} - \overline{\varphi^{(s_2)}_{n-2}}$.
Since $1 \leq n-2$, it follows that $A(z, \bar z, w, \bar w) : = \sum_{s_1+s_2 = 2s^*-1} <\varphi^{(s_1)}(z, w), \overline{\varphi^{(s_2)}(z, w)}>_1$ belongs to $\tilde S_{n-2}$. It follows from (\ref{star}) and Theorem \ref{4} that $f^{(s-1)} \equiv 0, g^{(s)} \equiv 0$ where $s=2s^*-1$ and $A \equiv 0$. By the definition of $A$, we have $<\varphi^{(s_1)}, \overline{\varphi^{(s_2)}}>_1 \equiv 0$ for $s_1+ s_2 = 2 s^*-1$.

It remains to show ({\ref{cr6}}) for $t=2s^* \geq 6$. We first complexity (\ref{star}) with $s=2s^*$:
\begin{equation}\label{2star}
\begin{split}
& g^{(2s^*)}(z, w) - \overline{g^{(2s^*)}(\xi, \eta)} - 2 \sqrt{-1} <\bar \xi, f^{(2s^*-1)}(z, w)> - 2 \sqrt{-1} <z, \overline{f^{(2s^*-1)}(\xi, \eta)}> \\
&= 2 \sqrt{-1} \sum_{k} <\varphi^{(k)}(z, w), \overline{\varphi^{(2s^*-k)}(\xi, \eta)}>_1.
\end{split}
\end{equation}

Write $$L_j = \frac{\partial}{\partial z_j} + 2 \sqrt{-1} \bar\xi_j \frac{\partial}{\partial w}, ~ 1 \leq j \leq n-1$$ be the tangent vector field of the complex hypersurface defined by $w=\bar \eta + 2\sqrt{-1} <z, \bar \xi>$.
We will first prove
\begin{equation}\label{cr7'}
f^{(2s^*-1)}(z, w) \equiv 0, g^{(2s^*)} \equiv 0.
\end{equation}
We begin by establishing the following proposition.

\begin{proposition}
For $1 \leq j \leq n-1$, we have
$$f_j^{2s^*-1}(z, w)=a^{(1)}_j(z) w^{s^*-1}, g^{(2s^*)}(z, w) = d^{(0)}w^{s^*} ~\text{and}~ a^{(1)}(z)= d^{(0)} z_j.$$
\end{proposition}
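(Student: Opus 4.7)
The plan is to exploit the complexified identity (\ref{2star}) by applying the tangent operators $L_j$, together with the antiholomorphic derivatives $\partial_{\bar\xi_i}$ and $\partial_{\bar\eta}$, and then specializing to $\bar\xi = \bar\eta = 0$ (which forces $w = 0$). When $(z, \bar\xi, \bar\eta)$ are treated as independent coordinates with $w = \bar\eta + 2\sqrt{-1}\langle z, \bar\xi\rangle$, the operator $L_j$ acts as $\partial_{z_j}$ on $(z,w)$-holomorphic functions, $\partial_{\bar\eta}$ acts as $\partial_w$, and $\partial_{\bar\xi_i}$ acts as $2\sqrt{-1}z_i\partial_w$. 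Each combined differentiation therefore yields an identity relating Taylor coefficients of $f^{(2s^*-1)}$, $g^{(2s^*)}$, and the $\varphi^{(k)}$'s at the origin.

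First I would apply $L^\alpha$ (pure $z$-differentiation) to (\ref{2star}) and set $\bar\xi = \bar\eta = 0$. The right-hand side vanishes since each antiholomorphic factor $\overline{\varphi^{(2s^*-k)}(\xi,\eta)}$ vanishes at the origin; on the left, the terms involving $\bar\xi$, $\overline{f^{(2s^*-1)}(\xi,\eta)}$, and $\overline{g^{(2s^*)}(\xi,\eta)}$ all vanish by weight reasons. This yields $\partial_z^\alpha g^{(2s^*)}(z,0) \equiv 0$ for every multi-index $\alpha$, so $g^{(2s^*)}$ is divisible by $w$. Iterating with $L^\alpha \partial_{\bar\eta}^b$ for $b < s^*$ produces $\partial_z^\alpha \partial_w^b g^{(2s^*)}(z,0) = 0$: the antiholomorphic factor on the right still vanishes at the origin provided the total $\partial_\eta$-order remains below the $\eta$-weight of every $\varphi^{(\ell)}$ appearing in the sum. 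Weighted-homogeneity (each $f_j^{(2s^*-1)}$ has weight $2s^*-1$ and $g^{(2s^*)}$ has weight $2s^*$) then forces $g^{(2s^*)}(z,w) = d^{(0)} w^{s^*}$ for some constant $d^{(0)}$. A parallel argument using $L^\alpha \partial_{\bar\xi_i}\partial_{\bar\eta}^b$ recovers $f_j^{(2s^*-1)}(z,w) = a^{(1)}_j(z) w^{s^*-1}$ with $a^{(1)}_j(z)$ linear in $z$. Finally, substituting these pure-power forms into (\ref{star}) at $s = 2s^*$ and comparing the lowest bidegree in $(z, \bar z)$ (after replacing $w = u + \sqrt{-1}|z|^2$) in
\begin{equation*}
\operatorname{Im}\Bigl\{d^{(0)} w^{s^*} - 2\sqrt{-1}\sum_j \bar z_j\, a^{(1)}_j(z)\, w^{s^*-1}\Bigr\}\Big|_{w=u+\sqrt{-1}|z|^2}
\end{equation*}
against the right-hand side yields the proportionality $a^{(1)}_j(z) = d^{(0)} z_j$.

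The main obstacle is the diagonal bracket $\langle \varphi^{(s^*)}, \overline{\varphi^{(s^*)}}\rangle_1$: unlike the off-diagonal brackets with $\min(s_1,s_2)\le s^*-1$, it cannot be reduced to a rank-one product via (\ref{cr7}), and its indefinite signature precludes a sign argument. However, our differentiation scheme keeps the total order of $\partial_{\bar\xi},\partial_{\bar\eta}$ strictly below $s^*$, which ensures that the antiholomorphic factor $\overline{\varphi^{(s^*)}(\xi,\eta)}$ (vanishing to order $\ge s^*$ at the origin) contributes nothing to any of the identities used to pin down the structural form of $f^{(2s^*-1)}$ and $g^{(2s^*)}$. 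The diagonal term reappears only in the final comparison step, where the rank-one representation of the lower-weight $\varphi^{(s_1)}$'s forces its contribution into a higher bidegree than the one extracting the proportionality $a^{(1)}_j(z) = d^{(0)} z_j$.
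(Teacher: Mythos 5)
Your first step (applying pure $z$-differentiation and setting $\bar\xi=\bar\eta=0$, hence $w=0$) is correct: every antiholomorphic factor on the right of (\ref{2star}) has positive weight and so vanishes at the origin, giving $g^{(2s^*)}(z,0)\equiv 0$, i.e.\ divisibility by $w$. But the crucial iterated step, claiming that $L^\alpha\partial_{\bar\eta}^b$ followed by evaluation at $\bar\xi=\bar\eta=0$ forces $\partial_z^\alpha\partial_w^b g^{(2s^*)}(z,0)=0$ for all $b<s^*$, is wrong. After $\partial_{\bar\eta}^{b_2}$ the factor $\overline{\varphi^{(2s^*-k)}(\xi,\eta)}$ does \emph{not} vanish at $\bar\xi=\bar\eta=0$ whenever $k=2(s^*-b_2)$: the pure $\bar\eta^{\,s^*-k/2}$ monomial of $\overline{\varphi^{(2s^*-k)}}$ survives (nothing in the normalization forbids $\varphi^{(\ell)}$ having a pure $w^{\ell/2}$ term for $\ell\ge 4$). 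Your parenthetical ``vanishing to order $\ge s^*$ at the origin'' conflates the weighted order with the ordinary vanishing order; the latter for $\overline{\varphi^{(\ell)}}$ is only $\lceil \ell/2\rceil$, and each $\partial_{\bar\eta}$ lowers the weighted order by $2$. Thus for every $b\ge 1$ the right-hand side contains genuine Taylor coefficients of the $\varphi^{(k)}$, and the claimed vanishing---the very step that would give $g^{(2s^*)}=d^{(0)}w^{s^*}$ and $f^{(2s^*-1)}=a^{(1)}(z)w^{s^*-1}$---simply does not hold.

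This is not a repairable oversight but a structural mismatch with what the argument needs. The paper never collapses to a single point: it sets $w=0$ (equivalently $\eta=2\sqrt{-1}\langle\bar z,\xi\rangle$) while keeping $\xi$ as a \emph{free} variable, so that the resulting identity (\ref{cr8}) is bilinear in $(z,\bar z,\xi)$. That bilinear form is exactly what Corollary~2.2 of \cite{BEH2} and Huang's Lemma act on: after substituting the rank-one representation (\ref{cr7}) for the low-weight $\varphi$'s, these lemmas kill $a^{(2p+1)}_j$ and the off-diagonal brackets, and the diagonal term $\langle b^{(s^*)}_{s^*}(z),\overline{b^{(s^*)}_{s^*}(\xi)}\rangle_1$ is annihilated by a term-count comparison ($n-1$ terms on the left versus $n-2$ on the right) via Huang's Lemma. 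Evaluating at $\bar\xi=\bar\eta=0$ loses this bilinear structure entirely, so none of these lemmas can be invoked, and your final ``compare the lowest bidegree'' step has no mechanism to extract $a^{(1)}_j(z)=d^{(0)}z_j$ in the presence of the diagonal bracket. You would need to keep $\xi$ alive and reproduce the $\xi$-degree bookkeeping of the paper (or a genuine substitute for it); purely pointwise Taylor-coefficient extraction cannot close the argument.
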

%
%
{\bf Proof of Proposition.} Applying $L_j$ to (\ref{2star}), we have on $w=\bar \eta + 2\sqrt{-1} <z, \bar \xi>$,
\begin{equation}\notag
\begin{split}
& L_j \left(g^{(2s^*)}(z, w)\right) - 2 \sqrt{-1} <\bar \xi, L_j f^{(2s^*-1)}(z, w)> - 2 \sqrt{-1}\overline{f^{(2s^*-1)}(\xi, \eta)} \\
&= 2 \sqrt{-1} \sum_{k} <L_j \varphi^{(k)}(z, w), \overline{\varphi^{(2s^*-k)}(\xi, \eta)}>_1.
\end{split}\end{equation}
Write \begin{equation}\notag
\begin{split}
f^{(s-1)}_j(z, w) &= \sum a_j^{(\tau_j)}(z) w^{\tau^j_s};\\
\varphi^{(k)} &= \sum b^{(\mu_k)}_k(z) w^{(\mu^*_k)};\\
g^{(s)}(z, w) &= \sum d^{(j)}(z) w^{n^j_s},
\end{split}
\end{equation}
where sums run over all indices such that $\tau_j + 2 \tau^j_s = s-1, \mu_k + 2 \mu^*_k =k, j +2n^j_s=s$ with $s=2 s^*$. Letting $w=0, \eta = 2 \sqrt{-1} <\bar z, \xi>$ and collecting terms of degree $k >2$ in $\xi$ and degree $P$ in $z$. We obtain
\begin{equation}\label{cr8}
\begin{split}
& 2\sqrt{-1} \overline{a^{(K-P)}_j} \overline{\eta^P} \\
&= 2\sqrt{-1} \sum_{k=2}^{s-2} <\varphi^{(k)}_{z_j}(z, 0), \sum \overline{b^{(\mu_{s-k})}_{s-k}(\xi)} \overline{\eta^{\mu^*_{s-k}}}>_1  - 4 \sum_{k'=3}^{s-2} \bar \xi_j <\varphi^{(k')}_w(z, 0), \sum \overline{b^{(\mu_{s-k'})}_{s-k'}(\xi)} \overline{\eta^{\mu^*_{s-k'}}}>_1,
\end{split}
\end{equation}
where the sum inside $<\cdot, \cdot>_1$ run over the indices $\mu_{s-k}+\mu_{s-k}^*=K, k-1+\mu^*_{s-k}=P$ and $\mu_{s-k'}+\mu_{s-k'}^*+1=K, k'-2+\mu^*_{s-k^*}=P$. Note that the solution only exist when $K+P+1=s=2s^*$. Letting $K=s^*+p, P=s^*-p-1$, we then get $\mu_{s-k}=k+2p, \mu^*_{s-k}=s^*-p-k$ and $\mu_{s-k'}=k'+2p-2, \mu^*_{s-k'}=s^*-p-k'+1$. Since both $\mu^*_{s-k}$ and $\mu^*_{s-k'}$ must be nonnegative, one has $2 \leq k \leq s^*-p, 3 \leq k' \leq s^*-p+1$. We now rewrite (\ref{cr8}) as
\begin{equation}\notag
\begin{split}
&-2\sqrt{-1} \overline{a^{2p+1}_j(\xi)} \overline{\eta^{s^*-p-1}} \\
&= 2\sqrt{-1} \sum_{k=2}^{s^*-p}<\varphi^{(k)}_{z_j}(z, 0), \overline{b_{s-k}^{(k+2p)}(\xi)}>_1 \overline{\eta^{s^*-p-k}} - 4 \sum_{k'=3}^{s^{*}-p+1} \bar \xi_j <\varphi^{(k')}_w(z, 0), \overline{b^{(k'+2p-2)}_{s-k'}(\xi)}>_1 \overline{\eta^{s^*+1-p-k'}}
\end{split}
\end{equation}
which can further be rewritten as
\begin{equation}\label{cr9}
\begin{split}
& -2\sqrt{-1}\overline{a_j^{(2p+1)}(\xi)} \overline{\eta^{s^*-p-1}} \\
&= \sum_{k=2}^{s^*-p} \left(2\sqrt{-1}<\varphi^{(k)}_{z_j}(z, 0), \overline{b^{(k+2p)}_{s-k}(\xi)}>_1 - 4 \bar\xi_j <\varphi^{(k+1)}_w(z, 0), \overline{b^{(k+2p-1)}_{s-k-1}(\xi)}>_1 \right) \overline{\eta^{s^*-p-k}}
\end{split}.
\end{equation}
This equation is valid for all $p=0, \cdots, s^*-1$. If $p=s^*-1$, the sum on the right hand side is void. Note that for any $q \leq s^*-1$, we can use (\ref{cr7}) to rewrite
\begin{equation}\notag
\begin{split}
<\varphi^{(q)}_{z_j}(z, 0), \overline{b^{(q+2p)}_{s-q}(\xi)}>_1 &= \sum_{i=1}^{n-3} \varphi^{(q)}_{i, z_j}(z, 0) \overline{b^{(q+2p)}_{i, s-q}(\xi)} - \varphi^{(q)}_{n-2, z_j}(z, 0) \overline{b^{(q+2p)}_{n-2, s_q}(\xi)} \\
&=\varphi^{(q)}_{n-2, z_j}(z, 0) \overline{C(\xi)}
\end{split}
\end{equation}
with $$C(\xi) = \sum_{i=1}^{n-3} a^s_j\overline{b^{(q+2p)}_{i, s-q}(\xi)} - \overline{b^{(q+2p)}_{n-2, s-q}(\xi)}.$$ We can make a similar substitution in $<\varphi^{(q)}_w(z, 0), \overline{b^{(q+2p)}_{s-k-1}(\xi)}>_1$ for any $q \leq s^*-1$. Therefore, if $2 \leq q \leq s^*-1$, we then conclude by Corollary 2.2 in \cite{BEH2}, as $2 \leq n-2$, that
\begin{equation}\label{cr10}
a^{(2p+1)}_j(z) \equiv 0, ~\text{for}~ j=1, \cdots, n-1 ~\text{and}~  p=2, \cdots, s^*-1.
\end{equation}
When $p=1$, (\ref{cr9}) can be written as
\begin{equation}\label{cr11}
\begin{split}
-2\sqrt{-1} \overline{a^{(3)}_j(\xi)} \overline{\eta^{s^*}} &= \sum_{k=1}^{s^*-2} \left( 2\sqrt{-1} <\varphi^{(k)}_{z_j}(z, 0), \overline{b^{(k+2)}_{s-k}(\xi)}>_1 - 4 \bar\xi_j <\varphi^{(k+1)}_w(z, 0), \overline{b^{(k+1)}_{s-k-1}(\xi)}>_1  \right) \overline{\eta^{s^*-1-k}} \\
&+ 2\sqrt{-1} <\varphi^{(s^*-1)}_{z_j}(z, 0), \overline{b^{(s^*+1)}_{s^*+1}(\xi)}>_1 -4 \bar\xi_j <\varphi^{(s^*)}_w(z, 0), \overline{b^{(s^*)}_{s^*}(\xi)}>_1
\end{split}.
\end{equation}
We now turn to the equation (\ref{2star}) in which we set $w=0$ and $\eta = 2 \sqrt{-1} <\bar z, \xi>$. Collecting terms of degree $s^*$ in $z$ and $s^*$ in $\xi$, we obtain
$$-\overline{d^{(0)}\eta^{s^*}} - 2 \sqrt{-1}<z, \overline{a^{(1)}(\xi)}> \overline{\eta^{s^*-1}} = 2 \sqrt{-1} \sum_{k=2}^{s^*} <\varphi^{(k)}(z, 0), \overline{b^{(k)}_{s-k}(\xi)}>_1 \overline{\eta^{s^*-k}}.$$
This can be rewritten as $$-\overline{d^{(0)}\eta^{s^*}} - 2 \sqrt{-1}<z, \overline{a^{(1)}(\xi)}> \overline{\eta^{s^*-1}} -2 \sqrt{-1} \sum_{k=2}^{s^*-1} <b^{(k)}_k(z), \overline{b^{(k)}_{s-k}(\xi)}> \overline{\eta^{s^*-k}} =2 \sqrt{-1} <b^{(s^*)}_{s^*}(z), \overline{b^{(s^*)}_{s^*}(\xi)}>_1 .$$
Note the left hand side is divisible by $\eta$ that is a summation of $n-1$ terms, while $<b^{(s^*)}_{s^*}(z), \overline{b^{(s^*)}_{s^*}(\xi)}>_1$ is a summation of $n-2$ terms, by Huang's lemma (cf. Lemma 3.2 in \cite{Hu1}  or Lemma 2.1 in \cite{BEH2} ), we have $$<b^{(s^*)}_{s^*}(z), \overline{b^{(s^*)}_{s^*}(\xi)}>_1 \equiv 0.$$
Therefore $$b^{s^*}_{j s^*}(z) = A^{s^*}_j b^{(s^*)}_{(n-2) s^*}(z), ~ 1 \leq j \leq n-3 $$ for some constant $A^{s^{*}}_j$. Here $b^{(s^*)}_{j s^*}$ is the $j$-th component of the vector-valued function $b^{(s^*)}_{s^*}(z)$. Using Corollary 2.2 in \cite{BEH2}, we have
\begin{equation}\label{cr12}
<b^{(k)}_k(z), \overline{b^{(k)}_{s-k}(\xi)}>_1 \equiv 0 ~\text{for}~ k=2, \cdots, s^*-1
\end{equation}
and \begin{equation}\label{cr13}
\overline{d^{(0)}\eta} = 2 \sqrt{-1} <z, \overline{a^{(1)}(\xi)}>.
\end{equation}
Equation (\ref{cr13}) implies $$a^{(1)}_j(z) = d^{(0)} z_j.$$
If we use (\ref{cr12}) to substitute for $b^{(s^*)}_{j s^*}$ in (\ref{cr11}) as we did before, then it follows from Corollary 2.2 in \cite{BEH2} again that
\begin{equation}\label{cr14}
a^{(3)}_j \equiv 0 ~\text{for}~ j =1, \cdots, n-1.
\end{equation}
(\ref{cr10}) and (\ref{cr14}) imply that $$f^{2s^*-1}(z, w)  = a^{(1)}(z) w^{s^*-1}.$$
To show $g^{(2s^*)}(z, w) = d^{(0)}w^{s^*}$, we go back to (\ref{2star}) in which we again set $w=0$ and $\eta = -2\sqrt{-1}<\bar z, \xi>$. Note that we have proved that the degree of $\overline{f^{2s^*-1}(\xi, \eta)}$ in $\xi$ is $s^*$. Thus, if we collect terms of degree $K=s^*+p$ in $\xi$ with $p \geq 1$, then we obtain $$-\overline{d^{(2p)}(\xi)\eta^{s^*-p}} = 2 \sqrt{-1} \sum_{k=2}^{s^*-p} <\varphi^{(k)}(z, 0), \sum \overline{b^{(\mu_{s-k})}_{s-k}(\xi)} \overline{\eta^{\mu^*_{s-k}}}>_1 ,$$
where the sum inside $<\cdot, \cdot>_1$ rums over $\mu_{s-k} + \mu^*_{s-k} = s^*+p, \mu_{s-k} + 2 \mu^*_{s-k} = s-k$. As before, the equation above can be rewritten as
$$-\overline{d^{(2p)}(\xi) \eta^{s^*-p}} = 2 \sqrt{-1} \sum_{k=2}^{s^*-p} <\varphi^{(k)}(z, 0), \overline{b^{(k+2p)}_{s-k}(\xi)}> \overline{\eta^{s^*-p-k}} ~\text{for}~p=1, \cdots, s^*.$$
We then substitute for $\varphi^{(k)}(z, 0)$ using (\ref{cr7}) and apply Corollary 2.2 in \cite{BEH2} to conclude
$$d^{(2p)}(\xi) \equiv 0 ~\text{for}~ p=1, \cdots, s^*.$$ This establishes the proposition.

We will need the following lemma to complete the proof of $(\ref{cr7'})$.

\begin{lemma}
Suppose that the hypotheses in Lemma \ref{huang} hold. Assume further that for any $p \in M$, we have
\begin{equation}\notag
\begin{split}
f^{**}_p(z, w) &= z+a^{(1)}_p(z) w^{s^*-1} + O_{wt}(s) \\
g^{**}_p(z, w) &= w+ d^{(0)}_p w^{s^*} + O_{wt}(s+1)
\end{split}
\end{equation}
where $s=2s^*$. Then $a^{(1)}_p(z) \equiv 0, d^{(0)}_p=0$ and hence $$f(z, w)=z+O_{wt}(s), g(z, w)=w+ O_{wt}(s+1).$$
\end{lemma}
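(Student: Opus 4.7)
I follow the moving-point strategy of Huang as adapted in \cite{BEH2}. For each $p \in M$ close to $0$, construct $F_p^{**}$ by conjugating $F$ with Heisenberg translations $\sigma_p \in \mathrm{Aut}(\mathbb{H}^n)$ and $\tau_p \in \mathrm{Aut}(\mathbb{H}^N_1)$ sending $p$ and $F(p)$ respectively to $0$, and postcomposing with a further $0$-fixing isotropy automorphism to enforce the normal form \eqref{normal1}. The inductive hypotheses \eqref{cr6} of Lemma \ref{huang} are invariant under these automorphism actions, so the preceding Proposition applies to $F_p^{**}$ at the origin and gives $a_p^{(1)}(z) = d_p^{(0)}\, z$ for some $d_p^{(0)} \in \mathbb{R}$. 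To conclude the Lemma it suffices to prove $d_p^{(0)} = 0$ for every $p$; by the moving-point symmetry this reduces to establishing $d^{(0)} = d_0^{(0)} = 0$.

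Next I would substitute the known weighted data into equation \eqref{star} at $s = 2s^*$. By the inductive hypothesis \eqref{cr4}, the only contributions of weighted degree $2s^*-1$ and $2s^*$ to $f$ and $g$ are $a^{(1)}(z) w^{s^*-1} = d^{(0)} z\, w^{s^*-1}$ and $d^{(0)} w^{s^*}$, so the left-hand side equals
\[
\mathcal{L}\!\bigl(d^{(0)} z\, w^{s^*-1},\, d^{(0)} w^{s^*}\bigr)(z,\bar z, u) = d^{(0)} \cdot R(z,\bar z, u),
\]
where $R$ is the real weighted-homogeneous polynomial of degree $2s^*$ obtained by expanding $\mathrm{Im}\{w^{s^*} - 2\sqrt{-1}\,|z|^2 w^{s^*-1}\}$ on $w = u + \sqrt{-1}|z|^2$, and $R$ is not identically zero.

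The heart of the argument is to control the right-hand side
\[
A(z,\bar z, w, \bar w) := \sum_{s_1+s_2=2s^*} \langle \varphi^{(s_1)}, \overline{\varphi^{(s_2)}}\rangle_1
\]
and to verify $A \in \tilde S_{n-2}$. Using \eqref{cr7} (valid for $s_1 \leq s^*-1$) together with the extension $\langle b^{(s^*)}_{s^*}, \overline{b^{(s^*)}_{s^*}}\rangle_1 \equiv 0$ obtained in the preceding Proposition, I would express each mixed term $\langle \varphi^{(s_1)}, \overline{\varphi^{(s_2)}}\rangle_1$ with $\min(s_1,s_2) < s^*$ as a product of a single component $\varphi_{n-2}^{(\min)}$ with a linear combination of the $\overline{\varphi_j^{(\max)}}$, and then treat the central term $s_1 = s_2 = s^*$ by iterating the D'Angelo-rank argument in the $w$-variable, using the divisibility by $w$ of the residual part of $\varphi^{(s^*)}$. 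This places $A$ in $\tilde S_{n-2}$.

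Once $A \in \tilde S_{n-2}$, Theorem \ref{4} forces $\mathcal{L}(d^{(0)} z\, w^{s^*-1}, d^{(0)} w^{s^*}) \equiv 0$, hence $d^{(0)} R \equiv 0$. Since $R \not\equiv 0$ we conclude $d^{(0)} = 0$, and therefore $a^{(1)}(z) \equiv 0$ by the preceding Proposition. Substituting back into the normal form gives $f(z,w) = z + O_{wt}(s)$ and $g(z,w) = w + O_{wt}(s+1)$. The same argument applied verbatim at every $p \in M$ yields $d_p^{(0)} = 0$ and $a_p^{(1)} \equiv 0$, completing the proof. The main obstacle is the verification $A \in \tilde S_{n-2}$, specifically handling the central term $s_1 = s_2 = s^*$: this requires propagating the D'Angelo-rank identity from the pure-$z$ coefficient $b^{(s^*)}_{s^*}$ to the full $\varphi^{(s^*)}$, which is the key technical step carried out (in the hyperquadric-to-hyperquadric setting) in \cite{BEH2}.
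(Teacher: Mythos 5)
The first half of your proposal is sound: conjugating by Heisenberg automorphisms, verifying that the induction hypotheses \eqref{cr6} are preserved, and applying the preceding Proposition at each base point $p$ to obtain $a_p^{(1)}(z) = d_p^{(0)} z$ is indeed the moving-point framework that the paper (citing Lemma 3.5 of \cite{BEH2}) invokes. But the closing steps 5--7 never use that framework; you instead re-invoke Theorem \ref{4} on equation \eqref{star} at $s = 2s^*$ and assert $\mathcal{L}(f^{(2s^*-1)}, g^{(2s^*)}) \equiv 0$. Notice the circularity: if Theorem \ref{4} applied here, it would directly give $f^{(2s^*-1)} \equiv g^{(2s^*)} \equiv 0$, which would render the entire preceding Proposition and the Lemma you are proving superfluous. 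The reason it does not apply in even weighted degree is that the pair $(p, q) = (c\,z\,w^{s^*-1},\, c\,w^{s^*})$ lies in the relevant ``kernel'': on $w = u + \sqrt{-1}|z|^2$ one has $-2\sqrt{-1}|z|^2 = \bar w - w$, hence $\mathcal{L}(p, q) = \mathrm{Im}\bigl(c\,w^{s^*-1}\bar w\bigr)$, which extends off $\mathbb{H}^n$ to the real weighted-homogeneous polynomial
$$A(z, \bar z, w, \bar w) = \frac{1}{2\sqrt{-1}}\bigl(c\,w^{s^*-1}\bar w - \bar c\,\bar w^{s^*-1} w\bigr),$$
and this $A$ lies in $\tilde S_1 \subseteq \tilde S_{n-2}$. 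So the hypotheses ``$A \in \tilde S_{n-2}$ and $\mathcal{L}(p,q) = A|_{w = u + \sqrt{-1}|z|^2}$'' admit a nontrivial solution in even weighted degree $s = 2s^*$, and the conclusion $p \equiv q \equiv 0$ cannot be extracted from the $\tilde S$-rank condition alone. (The odd case is immune precisely because no pair of the form $(c z w^k, c w^\ell)$ has the required weighted degrees $(s-1, s)$ with $s$ odd; this is exactly the dichotomy that structures the proof of Lemma \ref{huang}.) Your ``main obstacle'' paragraph mislocates the difficulty: establishing $A \in \tilde S_{n-2}$ for the diagonal term $s_1 = s_2 = s^*$ is essentially automatic since $N - n \leq n-2$, while the genuine obstruction is the kernel pair just exhibited.

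What the cited Lemma 3.5 of \cite{BEH2} (and hence the present Lemma) actually establishes is a transformation law for the scalar $d_p^{(0)}$ as $p$ varies over $M$: one composes $F$ with a one-parameter family of Heisenberg automorphisms, tracks how the normalized coefficient $d_p^{(0)}$ changes under the reparametrization, and derives from the resulting functional identity (using the unboundedness/homogeneity of the group action) that $d_p^{(0)} \equiv 0$. This is the content of Huang's moving-point trick from \cite{Hu1}. Your proposal correctly reduces to showing $d_0^{(0)} = 0$, but the step that kills it must be the comparison of normal-form data across base points, not a re-run of the $\tilde S_{n-2}$ argument --- the Proposition has already extracted everything that argument can give.
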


 \begin{proof}
 One can apply the identical proof of Lemma 3.5 in \cite{BEH2}. We need to employ the moving points trick that was first introduced in \cite{Hu1} and has been extensively used in many literatures (cf. \cite{Hu1}, \cite{BH}, \cite{BEH2}).
 \end{proof}

To complete the induction step in the proof of Lemma \ref{huang}, we must show that $$<\varphi^{(k)}(z, w), \overline{\varphi^{(s-k)}(\xi, \eta)}>_1 \equiv 0 ~\text{for}~ k=2, \cdots, s-2$$ with $s=2s^*$. This can be established by a very similar proof as \cite{BEH2} (cf. pp 1649-1655) using the same idea as above with the following difference.
In our setting, one should apply Huang's lemma (cf. Lemma 3.2 \cite{Hu1}  or Lemma 2.1 \cite{BEH2} ) while \cite{BEH2} uses Lemma 2.3.
Here note Huang's lemma can be applied as by our assumption $<\varphi, \bar\varphi>_1$ is a summation with less terms than $<z, \bar \xi>$, i.e. $N-n < n-1$.
\qed

\medskip

Now we are able to finish the proof of Proposition \ref{hyper2}. First we may assume that $F$ satisfies the normalization (\ref{normal1}). By Lemma \ref{huang}, we conclude that
$$f(z, w)\equiv z, g(z, w)\equiv w, <\varphi(z, w), \overline{\varphi(\xi, \eta)}>_1 \equiv 0.$$
By the result of D'Angelo \cite{D2}, there is a constant $(n-3)\times(n-3)$ unitary matrix $U$ such that
$$(\varphi_1, \cdots, \varphi_{n-3}) \cdot U = (\varphi_{n-2}, 0, \cdots, 0)$$ with $(n-4)$ zero components on the right hand side of the equation. Letting $\gamma$ be the automorphism of $\mathbb{H}^N_1$ given by $$\gamma(z, w) = (z_1, \cdots, z_{n-1}, (z_n, \cdots, z_{N-2}) \cdot U, z_{N-1}, w),$$ then $\gamma \circ F$ satisfies the conclusion of Proposition \ref{hyper2}. This finishes the proof of Proposition \ref{hyper2}.

\medskip

We now prove Theorem \ref{hyper}.

\medskip

{\bf Proof of Theorem \ref{hyper}.} We set $$E = \{p \in \mathbb{H}^n | F ~\text{is ~not ~CR~transversal ~to } \mathbb{H}^N_1 \text{~at~}p \}. $$ The set $E$ is a real analytic subvariety near 0 in $\mathbb{H}^n$. If $E$ contains an open neighborhood of 0, then it follows from Theorem 1.1 in \cite{BER2} (cf. \cite{BH} Lemma 4.1) that $F(U) \subset \mathbb{H}^N_1$, which contradicts the assumption. Thus $E$ is a proper real analytic subvariety of $\mathbb{H}^n$. Consequently, there exists $p \in \mathbb{H}^n$ near 0 such that $F$ is CR transversal at $p$. We recall from \cite{BEH2} that $F$ is CR transversal to $\mathbb{H}^N_1$ at $p \in M$ if and only if $\frac{\partial g_p}{\partial w}(0) \not=0$. Here $F_p=(f_p, \varphi_p, g_p)$ is obtained from $F$ by moving to the point $p$ (refer the detailed definition of $F_p$ in \cite{BH}). Thus replacing $F$ by $F_p$ if necessary, we can assume $p=0$. Then the result follows easily from Proposition \ref{hyper2}.

\medskip

Recall the well-known fact that the Cayley transformation biholomorphically maps $\mathbb{H}^N_1$ into $\partial\mathbb{B}^N_1$ minus a proper subvariety. Then Theorem
\ref{propergene1} is a consequence of Theorem \ref{hyper}.


\end{document}